\newtheorem{theorem}{Theorem}[section]
\newtheorem{lemma}[theorem]{Lemma}
\newtheorem{proposition}[theorem]{Proposition}
\newtheorem{fact}[theorem]{Fact}
\newtheorem{claim}[theorem]{Claim}
\newtheorem*{claim*}{Claim}
\theoremstyle{definition}
\newtheorem{remark}[theorem]{Remark}
\newtheorem{observation}[theorem]{Observation}
\newcommand{\floor}[1]{\left\lfloor {#1} \right\rfloor}
\newcommand{\ceiling}[1]{\left\lceil {#1} \right\rceil}
\def\eps{\varepsilon}
\def\ex{\mathbb{E}}
\newcommand{\ep}{\varepsilon}
\newcommand{\MK}{\widetilde{K}}
\begin{document}

\title{Rainbow Cliques in Edge-Colored Graphs}

\author[A.~Czygrinow]{Andrzej Czygrinow}
\address{School of Mathematical and Statistical Sciences, 
Arizona State University, Tempe, AZ 85281, USA}
\email{aczygri@asu.edu}  

\author[T.~Molla]{Theodore Molla}\thanks{The second author was partially supported by 
NSF Grants DMS~1800761 and DMS~2154313.}  
\address{Department of Mathematics and Statistics, University of 
South Florida, Tampa, FL 33620, USA}
\email{molla@usf.edu}  

\author[B.~Nagle]{Brendan Nagle}
\address{Department of Mathematics and Statistics, University of 
South Florida, Tampa, FL 33620, USA}  
\email{bnagle@usf.edu}  

\date{\today}

\begin{abstract}
  Let $G = (V,E)$ be an $n$-vertex graph and let $c: E \to \mathbb{N}$ be a coloring of its edges.
  Let $d^c(v)$ be the number of distinct colors on the edges at $v \in V$ and let $\delta^c(G) = \min_{v \in V} \{ d^{c}(v) \}$.
  H.\ Li proved that $\delta^c(G) > n/2$ guarantees a rainbow triangle
  in $G$.  
  We give extensions of Li's result to cliques $K_r$ for $r \ge 4$.
\end{abstract}

\maketitle

\section{Introduction}
An {\it $n$-vertex edge-colored graph} $(G, c)$
consists of an $n$-vertex graph 
$G = (V, E)$ and an edge-coloring $c : E \to {\mathcal C}$ with some palette ${\mathcal C}$ of colors (usually ${\mathcal C} = \mathbb{N})$.   
A subset of edges 
$F \subseteq E$ is {\it rainbow} when $c|_F$ is injective and
a subgraph $H \subseteq G$ is \textit{rainbow} when $E(H)$ is rainbow.
The function $c$ is a \textit{proper coloring} of $G$ if 
the set $E_G(v, V)$
of edges incident to each $v \in V$ is rainbow.
The \textit{color degree} 
$d^c_G(v)$ of $v \in V$
is the number of colors appearing on $E_G(v, V)$, and    
$\delta^c(G) := \min \{d^c_G(v) : v \in V\}$.
H.~Li \cite{li2013rainbow} proved that $\delta^c(G) > n/2$ guarantees
a rainbow triangle in $G$, and B.~Li, Ning, Xu, \& Zhang \cite{li2014rainbow}
proved the following extension.   

\begin{theorem}[Li et al.~\cite{li2014rainbow}]\label{thm:liningxuzhang}
  For every $4 \neq n \in \mathbb{N}$, 
  every $n$-vertex edge-colored graph $(G,c)$ with  
  $\delta^c(G) \ge n/2$ and no rainbow triangle satisfies that 
$n$ is even, $G = K_{n/2, \, n/2}$, and $c$ is
  a proper edge-coloring of $G$.  
\end{theorem}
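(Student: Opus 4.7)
The plan is first to reduce to $n$ even, then establish the main claim that $G$ is triangle-free, and finally apply Mantel's theorem. H.~Li's theorem forces $\delta^c(G) \le n/2$ on any rainbow-triangle-free $(G, c)$; combined with the hypothesis $\delta^c(G) \ge n/2$, this yields $\delta^c(G) = n/2$, so $n$ must be even (for $n$ odd, no such $(G, c)$ exists and the statement is vacuous). Granting triangle-freeness, the rest is clean: $\delta(G) \ge \delta^c(G) = n/2$ gives $e(G) \ge n^2/4$; Mantel's theorem gives $e(G) \le n^2/4$ with $K_{n/2,n/2}$ as the unique extremal graph, so $G = K_{n/2,n/2}$; and $\deg(v) = n/2 = d^c(v)$ at every $v$ forces the $n/2$ edges at $v$ to carry distinct colors, i.e., $c$ is a proper edge-coloring.

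To prove triangle-freeness, I would argue by contradiction. Suppose $\{u, v, w\}$ spans a triangle; it is not rainbow, so two edges share a color, and after relabelling $c(uv) = c(uw) = \alpha$. Hence $\alpha$ repeats at $u$, so $\deg(u) \ge d^c(u) + 1 \ge n/2 + 1$. For any $x \in N(u) \setminus \{v, w\}$ with $c(ux) = \gamma$, the non-rainbow condition on $uxv$ (when $xv \in E$) and $uxw$ (when $xw \in E$) forces $c(xv), c(xw) \in \{\alpha, \gamma\}$; symmetrically, adjacent $x, y \in N(u)$ with $c(ux) = \gamma \ne \gamma' = c(uy)$ must satisfy $c(xy) \in \{\gamma, \gamma'\}$. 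I would combine these constraints with $d^c(v), d^c(w) \ge n/2$---and, for vertices outside $N(u)$, with analogous constraints coming from triangles through $v$ or $w$---to reach a contradiction, either by building a rainbow triangle on some fourth vertex or by showing that $v$ or $w$ cannot possibly accumulate $n/2$ distinct colors.

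The main obstacle will be executing this casework while simultaneously tracking whether $c(vw) = \alpha$ (a monochromatic triangle) versus $c(vw) \ne \alpha$, and whether the various $x$'s lie in $N(u) \cap N(v)$, $N(u) \cap N(w)$, both, or neither. The exclusion $n = 4$ is genuinely necessary: on $K_4$ with vertices $\{1, 2, 3, 4\}$, colouring the $4$-cycle $12, 23, 34, 41$ with $\alpha$ and the diagonals $13, 24$ with $\beta$ yields $\delta^c = 2 = n/2$ and no rainbow triangle, yet $K_4 \ne K_{2,2}$. The proof for $n \ge 6$ (the only non-vacuous even case beyond $4$) must somewhere exploit the extra vertex count---for instance, the existence of a vertex beyond $\{u, v, w\}$ whose color degree cannot be reconciled with the restrictions above---to force the contradiction that $n = 4$ evades.
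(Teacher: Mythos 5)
This statement is a cited theorem from Li, Ning, Xu, and Zhang~\cite{li2014rainbow}; the paper you are reading does not reprove it, so there is no in-paper proof to compare against. Evaluating your proposal on its own terms: the outer frame is fine. Using H.~Li's bound to force $\delta^c(G)=n/2$ (hence $n$ even), and then, \emph{granting} triangle-freeness, deducing $G=K_{n/2,n/2}$ from Mantel's theorem together with $\delta(G)\ge\delta^c(G)=n/2$, and finally deducing that $c$ is proper from $d_G(v)=n/2=d^c_G(v)$ at every vertex --- all of that is correct and standard.

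The problem is that you have not actually proved the triangle-freeness claim, and that claim is essentially the whole theorem. Once one knows $G$ is triangle-free, the rest is two lines; the entire difficulty in Li--Ning--Xu--Zhang is precisely ruling out triangles (equivalently, ruling out any edge inside a part). Your paragraph on triangle-freeness is a plan, not an argument: you set up local color constraints around a hypothetical non-rainbow triangle $uvw$ with $c(uv)=c(uw)=\alpha$, correctly observe they force each neighbor $x$ of $u$ to satisfy $c(xv),c(xw)\in\{\alpha,c(ux)\}$ when the relevant edges exist, and then say you ``would combine these constraints\dots to reach a contradiction.'' That is where the proof has to happen and where it does not. The casework is genuinely delicate --- you yourself flag that tracking $c(vw)=\alpha$ versus $c(vw)\ne\alpha$, and tracking which $x$ lie in which neighborhood intersections, is the obstacle --- and the argument must in the end use $n\ne4$ in a load-bearing way (your $K_4$ example shows the constraints alone are not contradictory). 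Absent that execution, the proposal reduces the theorem to an unproved lemma of comparable difficulty, so it is not a proof.

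Two smaller remarks. First, be careful with the direction of the first reduction: Li's theorem gives the contrapositive ``no rainbow triangle $\Rightarrow\delta^c(G)\le n/2$,'' which with your hypothesis pins $\delta^c(G)=n/2$ and hence forces $n$ even because $\delta^c$ is an integer; you have the right idea but should state it as an implication about the hypotheses rather than as ``$n$ odd is vacuous,'' since ``$n$ is even'' is a conclusion of the theorem. Second, the $n=4$ exceptions are $K_4$ and $K_4-e$ (as the paper notes); your explicit $K_4$ coloring is a correct witness for why the hypothesis does not force bipartiteness there.
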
  
\noindent  $\big($When $n = 4$, the same hypothesis allows for 
  improper colorings of $G \in \{K_4, K_4 - e\}$.$\big)$

The main result of this paper is an extension of Theorem~\ref{thm:liningxuzhang} for large graphs.
\begin{restatable}{theorem}{rainbowKrExact}\label{thm:rainbowKrExact}
  For every integer $s\ge 2$, every sufficiently large $n$-vertex edge-colored graph 
  $(G, c)$ with $\delta^c(G) \ge \left( 1 - \frac{1}{s-1} \right)n$
  and no rainbow $K_s$ satisfies that  
  $s-1$ divides $n$, $G$ is a complete balanced $(s-1)$-partite graph,
  and $c$ is a proper edge-coloring of $G$.
\end{restatable}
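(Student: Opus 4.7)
The plan is to proceed by induction on $s \ge 3$, with base case $s = 3$ given by Theorem~\ref{thm:liningxuzhang}. Fix $s \ge 4$, assume the result for $s-1$, and let $(G,c)$ satisfy the hypotheses for $s$. First I would observe the edge-count bound
\[
  2 e(G) \;\ge\; \sum_{v \in V} d^c(v) \;\ge\; \left(1 - \tfrac{1}{s-1}\right) n^2 \;=\; 2\,\mathrm{ex}(n, K_s),
\]
so by Turán's theorem either $G$ equals the Turán graph $T_{s-1}(n)$ or $G$ contains at least one copy of $K_s$.

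If $G = T_{s-1}(n)$, every vertex has $d(v) \le (1 - 1/(s-1))n$ with equality iff $v$ lies in a smallest part. Combined with $d^c(v) \le d(v)$ and the color-degree hypothesis, this forces $d(v) = d^c(v) = (1 - 1/(s-1))n$ at every $v$. Consequently $(s-1) \mid n$, all parts have size $n/(s-1)$, and $c$ is injective on the edges at each vertex, i.e., $c$ is a proper edge-coloring of $G$, which matches the conclusion.

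Otherwise, I aim to derive a contradiction by exhibiting a rainbow $K_s$. Pick $v$ lying in some $K_s$ of $G$ and set $H := G[N(v)]$, $m := d(v) \ge (1 - 1/(s-1))n$. For each $u \in N(v)$, at most $n - m$ of the distinct colors at $u$ in $G$ can be lost on passing to $H$, so
\[
  d^c_H(u) \;\ge\; d^c_G(u) - (n - m) \;\ge\; m - \tfrac{n}{s-1} \;\ge\; \left(1 - \tfrac{1}{s-2}\right) m,
\]
where the last inequality uses $m \ge (s-2)n/(s-1)$. Thus $(H, c|_H)$ satisfies the hypothesis of the theorem for $s-1$, and $m$ is large with $n$. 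Since $v$ lies in a $K_s$ of $G$, the graph $H$ contains a $K_{s-1}$ and in particular $H \neq T_{s-2}(m)$; the inductive hypothesis then delivers a rainbow $K_{s-1}$ in $H$ on some vertex set $U \subseteq N(v)$.

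The main obstacle is the extension step: I need to choose such a $U$ so that $\{v\} \cup U$ is actually a rainbow $K_s$, meaning the star $\{vu : u \in U\}$ carries $s-1$ distinct colors, none of which appears inside the $K_{s-1}$ on $U$. As stated, the induction yields only a single rainbow $K_{s-1}$, so I would upgrade it to a quantitative/stability statement ensuring \emph{many} rainbow copies of $K_{s-1}$ in $H$ whenever $H$ is not extremal. Letting $B \subseteq N(v)$ be the set of neighbors whose edge-color at $v$ is shared with another neighbor, one has $|B| \le 2(d(v) - d^c(v)) \le 2n/(s-1)$, so any $U \subseteq N(v) \setminus B$ automatically gives $s-1$ distinct colors on the $v$-star. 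An averaging argument over the many rainbow $K_{s-1}$'s in $H[N(v) \setminus B]$ should then produce some $U$ whose $\binom{s-1}{2}$ internal colors avoid the $s-1$ colors of the $v$-star, since only $\binom{s-1}{2}$ colors are forbidden out of the $\ge (1 - 1/(s-1))n$ available at $v$. This produces a rainbow $K_s$, contradicting the hypothesis and completing the induction; the delicate part is proving the quantitative inductive statement that makes the averaging go through.
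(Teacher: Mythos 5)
Your argument is a genuinely different route from the paper's. The paper proceeds via the regularity method and a stability dichotomy: a ``Non-extremal Lemma'' (Lemma~\ref{lem:nonextremal_rainbow_Kr}, itself proved through a digraph regularity lemma, a multigraph Tur\'an theorem, and a rainbow embedding lemma) shows that any counterexample must be $(K_s,\beta)$-extremal, and an ``Extremal Lemma'' (Lemma~\ref{lem:extremal_rainbow_Kr}) finishes by an ad~hoc analysis near the Tur\'an graph. You instead propose a direct induction on $s$ using Theorem~\ref{thm:liningxuzhang} as the base case. The opening reduction is clean and correct: $2e(G)\ge \sum_v d^c(v)\ge (1-\tfrac1{s-1})n^2\ge 2\,\mathrm{ex}(n,K_s)$, and Tur\'an's theorem gives the dichotomy ``either $G=T_{s-1}(n)$ with $(s-1)\mid n$, or $G\supseteq K_s$.'' The extremal branch $G=T_{s-1}(n)$ is handled correctly, and the color-degree transfer $d^c_H(u)\ge d^c_G(u)-(n-m)\ge (1-\tfrac1{s-2})m$ for $H=G[N(v)]$ is a nice computation. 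So the whole weight falls on the extension step, as you flag.

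There, however, the proposal has a genuine gap, and I do not think the patches you sketch resolve it. Three separate problems arise. First, the inductive statement (Theorem~\ref{thm:rainbowKrExact} at $s-1$) produces a \emph{single} rainbow $K_{s-1}$, not a positive fraction of $K_{s-1}$'s; the ``quantitative/stability'' strengthening you invoke is an entire new theorem, not a corollary, and at the exact threshold $\delta^c\ge(1-\tfrac1{s-2})m$ a counting version is not obviously true (when $H$ is the Tur\'an graph plus $O(1)$ edges it has only $O(m^{s-3})$ copies of $K_{s-1}$, all through a fixed edge, so there is no room for averaging). Second, passing to $H'=H[N(v)\setminus B]$ can destroy the very hypothesis you need: with $m=d(v)$ and $|B|\le 2\bigl(d(v)-d^c(v)\bigr)$, one computes $m-|B|\ge 2d^c(v)-m$, and taking $d^c(v)=(1-\tfrac1{s-1})n$ with $d(v)=n-1$ gives $m-|B|\ge(\tfrac{s-3}{s-1})n+1<(\tfrac{s-2}{s-1})n$, so $H'$ need not meet the $(s-1)$-threshold at all. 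Third, even granting many rainbow $K_{s-1}$'s, the averaging is not a union bound over $\binom{s-1}{2}$ fixed forbidden colors: the forbidden set $\{c(vu):u\in U\}$ depends on $U$, and a single color $c(vu)$ can recur on $\Theta(n)$ interior edges of $H$ at $u$ (the color-degree hypothesis bounds the total excess $d(u)-d^c(u)$ by $n/(s-1)$, not the multiplicity of any one color), so the ``bad'' copies can be a constant fraction of all copies. The paper sidesteps all of this by never trying to extend a rainbow $K_{s-1}$ by a pre-chosen apex; instead it encodes color repetitions into a digraph/multigraph, applies a multigraph Tur\'an theorem (Theorem~\ref{lem:multigraphturan}) plus a rainbow embedding lemma (Lemma~\ref{lem:MDr_to_rainbowKr}), and handles the near-Tur\'an structure separately.
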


We also consider results for graphs with 
large {\it average color degree} $\ex[d^c_G(v)] = (1/|V|) \sum_{v \in V(G)} d^c_G(v)$.  

\begin{theorem}[Li at al~\cite{li2014rainbow}]\label{thm:liavgrainbow} 
  Every $n$-vertex edge-colored graph $(G,c)$ with $\ex[d^c_G(v)] \ge (n+1)/2$
contains a rainbow triangle $($which is best possible from $G = K_{[n]}$ and
$c(ij) = j$ for all integers $1 \leq i < j \leq n)$.
  \end{theorem}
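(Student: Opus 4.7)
The plan is to prove the contrapositive: every $n$-vertex edge-colored $(G,c)$ without a rainbow triangle satisfies
\[
 \sum_{v \in V(G)} d^c_G(v) \le \frac{(n-1)(n+2)}{2} = \frac{n(n+1)}{2} - 1,
\]
matching the extremal example $G = K_{[n]}$ with $c(ij)=j$ for $i<j$; dividing by $n$ then recovers the theorem. I argue by induction on $n$, with base case $n=3$ immediate, since a color-degree sum of $6$ on three vertices forces each color degree to equal $2$, producing a rainbow triangle.

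For the inductive step, I introduce, for each vertex $v \in V(G)$, the quantity
\[
 a(v) := d^c_G(v) + U(v), \qquad U(v) := |\{u \in N_G(v) : c(uv) \text{ is the only edge at } u \text{ of color } c(uv)\}|.
\]
A direct check yields the deletion identity
\[
 \sum_v d^c_G(v) = a(v_0) + \sum_{u \ne v_0} d^c_{G - v_0}(u),
\]
because a vertex $u \ne v_0$ loses exactly one color upon deletion of $v_0$ precisely when $c(uv_0)$ is unique at $u$. Since $G - v_0$ is also rainbow-triangle-free, the inductive hypothesis gives $\sum_{u \ne v_0} d^c_{G-v_0}(u) \le (n-2)(n+1)/2$, so the desired bound on $\sum_v d^c_G(v)$ reduces to finding a vertex $v_0$ with $a(v_0) \le n$.

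The main lemma is thus: every rainbow-triangle-free $(G,c)$ admits a vertex $v_0$ with $a(v_0) \le n$. I plan a case dichotomy. If $c$ is a proper edge-coloring, every triangle in $G$ is rainbow, so the no-rainbow-triangle hypothesis forces $G$ triangle-free; Mantel's theorem then gives $\sum d^c_G(v) = 2 e(G) \le n^2/2 \le n(n+1)/2 - 1$ directly, bypassing the induction. Otherwise $c$ is not proper; applying H.\ Li's theorem contrapositively yields $v_0$ with $d^c(v_0) \le \lfloor n/2 \rfloor$. On the color partition $N_G(v_0) = N_1 \cup \cdots \cup N_k$, where $N_i := \{u \in N_G(v_0) : c(uv_0) = c_i\}$ and $\{c_1, \ldots, c_k\}$ are the distinct colors at $v_0$, the no-rainbow-triangle hypothesis forces $c(uu') \in \{c_i, c_j\}$ on every cross edge $uu'$ with $u \in N_i, u' \in N_j, i \ne j$. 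Combined with the uniqueness of $c_i = c(uv_0)$ at each $u$ counted by $U(v_0)$, this rigidity pins the colors at such $u$ and should yield $U(v_0) \le n - d^c(v_0)$, giving $a(v_0) \le n$.

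The principal obstacle is this final bound on $U(v_0)$ in the non-proper case. A cleaner equivalent formulation is the global inequality $\sum_v a(v) \le n^2$ (tight on the extremal example, where every vertex satisfies $a(v) = n$); the main lemma then follows by averaging. I expect the proof of $\sum a \le n^2$ to reduce to a per-color-class argument that balances the combinatorial ``defect'' of non-matching color classes against the excess of $e(G)$ beyond Mantel's bound, with the proper subcase already handled and the non-proper subcase controlled by the cross-edge rigidity above.
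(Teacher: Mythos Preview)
The paper does not supply its own proof of this theorem; it is quoted from Li, Ning, Xu, and Zhang and serves only as context for Theorem~\ref{thm:rainbowKr}. So there is no in-paper argument to compare against, and your proposal has to stand on its own.

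Your inductive framework is correct. The deletion identity
\[
\sum_{v} d^c_G(v) \;=\; a(v_0) \;+\; \sum_{u\neq v_0} d^c_{G-v_0}(u)
\]
holds for exactly the reason you give, and together with the induction hypothesis it reduces the theorem to the main lemma that some $v_0$ satisfies $a(v_0)\le n$. The proper-coloring branch via Mantel is also fine.

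The gap is precisely where you locate it, and your sketched rigidity argument does not close it. What the no-rainbow-triangle hypothesis actually gives you is this: if $u\in N_i$ and $u'\in N_j$ with $i\neq j$ both contribute to $U(v_0)$, then $uu'\notin E(G)$, since $c(uu')\in\{c_i,c_j\}$ would violate uniqueness at one of the two endpoints. That is a non-adjacency constraint on the set $S$ of contributors, not a cardinality bound. It says nothing about $|S\cap N_i|$ for a single class (edges inside $N_i$ close only monochromatic triangles through $v_0$ and are thus unconstrained beyond $c\neq c_i$), and it does not prevent $S$ from meeting many classes so long as those contributors are pairwise non-adjacent. You have not explained how to convert this structural information into $|S|\le n-d^c(v_0)$, and there is no visible mechanism that uses only the fact that $v_0$ has minimum color degree. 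The alternative target $\sum_v a(v)\le n^2$ is equivalent in strength to the main lemma by averaging and is equally unproven; the phrase about balancing a ``defect'' against the excess over Mantel's bound is a hope, not an argument. There is also a mild circularity of taste: you invoke H.~Li's minimum-color-degree theorem as a black box inside a proof of the average-color-degree theorem, so even if the gap were filled the argument would not be self-contained.
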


\noindent We prove an asymptotic extension of Theorem~\ref{thm:liavgrainbow}
together 
with conclusions for some complete multipartite graphs.  For these,
let $K_s^\ell$ be the complete balanced $s$-partite graph with parts 
each 
of size $\ell$.
For vertex disjoint graphs $G$ and $H$, let $G \vee H$ be the join of $G$ and $H$, consisting of $G$, $H$, and 
$K\big[V(G), V(H)\big]$. 



\begin{restatable}{theorem}{rainbowKr}\label{thm:rainbowKr}
Fix $\gamma > 0$, $\ell, r, s \in \mathbb{Z}$ with $\ell \geq 1$, $r \geq 0$, and $s \geq \max\{1 + 2r, 2\}$,    
and a sufficiently large $n$-vertex edge-colored graph $(G, c)$.
  \begin{enumerate}
      \item[{\rm (i)}]  
      If $\ex[d^c_G(v)] \ge \big( 1 - \frac{1}{s-1} + \gamma \big)n$, then $(G, c)$
  contains a rainbow $K_{\floor{s/2}} \vee K^\ell_{\ceiling{s/2}} \supseteq K_s$.
\item[{\rm (ii)}]  If $\ex[d^c_G(v)] \ge 
  \big( 1 - \frac{1}{2(s - 1 - r)} + \gamma \big)n$,
  then $(G, c)$ contains a rainbow $K_r \vee K^\ell_{s - r}$ $(=K_s^{\ell}$ when $r=0)$.
  \end{enumerate}
  Moreover, these results are asymptotically sharp.
\end{restatable}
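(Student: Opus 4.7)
The plan is to reduce to large minimum color degree, apply Theorem~\ref{thm:rainbowKrExact} at the appropriate clique size to extract a rainbow seed clique with many copies via supersaturation, extend the seed to the desired rainbow multipartite blow-up through a suitable embedding argument, and verify asymptotic sharpness via explicit constructions.

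Write $\rho=1-\tfrac{1}{s-1}$ in case~(i) and $\rho=1-\tfrac{1}{2(s-1-r)}$ in case~(ii). By a standard color-degree reduction (iterated vertex deletion, with any loss absorbed into the $\gamma$ slack), I pass to an induced subgraph $G'\subseteq G$ with $|V(G')|=(1-o(1))n$ and $\delta^c(G')\ge(\rho+\tfrac{\gamma}{2})|V(G')|$. Then I apply Theorem~\ref{thm:rainbowKrExact} to $G'$: in case~(i) at clique size $s$, yielding a rainbow $K_s$ (the $\gamma/2$ slack rules out the extremal balanced $(s-1)$-partite obstruction); in case~(ii) at clique size $t:=2(s-1-r)+1=2(s-r)-1$, yielding a rainbow $K_t$ (since $1-\tfrac{1}{t-1}=1-\tfrac{1}{2(s-1-r)}$ is exactly our threshold). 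A standard Erd\H{o}s--Simonovits-type supersaturation argument (iteratively delete one colored edge and reapply the theorem) then upgrades this to $\Omega(n^s)$ rainbow $K_s$'s in~(i), respectively $\Omega(n^t)$ rainbow $K_t$'s in~(ii).

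To promote a rainbow clique to the target blow-up, in case~(i) I fix a rainbow $K_s$ and designate $\lfloor s/2 \rfloor$ of its vertices as the singleton parts, then blow up the remaining $\lceil s/2 \rceil$ seed vertices to parts of size $\ell$. In case~(ii) I partition the rainbow $K_t$ into $s-r-1$ pairs plus one singleton to serve as seeds for the $s-r$ blown-up parts, and select the $r$ singletons of the $K_r$-part jointly via the supersaturation reserve. The extension is the main technical difficulty: naive inclusion--exclusion on common color-neighborhoods is insufficient once $\ell\ge 2$, since the number of required adjacencies already exceeds the $\gamma$-slack budget. I plan to overcome this through a supersaturation-based counting argument (in the spirit of the Erd\H{o}s--Simonovits blow-up lemmas) or equivalently through a regularity-lemma embedding in the auxiliary color structure, combined with a layer-by-layer color-disjoint extension: once the abundance of rainbow copies of the seed clique has been established in $G'$, I expect to produce many ``weakly rainbow'' candidate embeddings of the target blow-up (differing only by color collisions on a bounded number of pairs) and then to remove collisions one pair at a time while preserving a positive density.

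Asymptotic sharpness: for~(i), $K_{s-1}^{n/(s-1)}$ with any proper edge-coloring gives $\bar d^c=(1-\tfrac{1}{s-1})n$ and contains no $K_s$; for~(ii), a suitable generalization of the Li et al.~``maximum-label'' coloring on an appropriate multipartite host graph attains $\bar d^c\approx(1-\tfrac{1}{2(s-1-r)})n$ while forcing a color repetition inside every candidate $K_r\vee K^\ell_{s-r}$, via the monotone structure of the labels.
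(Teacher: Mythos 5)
The crux of the theorem is the step you leave unresolved: promoting a rainbow clique seed to the rainbow multipartite blow-up. You acknowledge this is ``the main technical difficulty'' and offer two alternatives --- a supersaturation-based counting argument, or ``equivalently'' a regularity-lemma embedding with layer-by-layer collision removal --- but these are not equivalent, and the first does not work. Having $\Omega(n^s)$ rainbow copies of $K_s$ (or $\Omega(n^t)$ of $K_t$) provides no control over the colors on edges among the many vertices that would have to form a single blown-up part: the rainbow structure of a seed clique says nothing about whether candidate replacements for one of its vertices meet the others, and one another, in distinct, fresh colors. The paper explicitly flags ``the wide gap \dots\ between average color degree thresholds for rainbow $K_s$ and $K_s^{\ell}$ subgraphs'' as a distinguishing feature of this theorem, so the jump from clique to blow-up is exactly where the real work lives. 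The paper never passes through a rainbow clique seed: it forms a $(G, c, \sqrt{n})$-digraph $D$, applies the degree form of the directed regularity lemma (Theorem~\ref{thm:degree_form}), converts the reduced digraph $D' - V_0$ to a standard multigraph $M'$, uses Observation~\ref{obs:GtoDoutdeg} to show $e(M') \geq n \cdot \mathbb{E}[d^c_G(v)] - 3\varepsilon n^2$, invokes the multigraph Tur\'an theorem (Theorem~\ref{lem:multigraphturan}) to find a $\mathbb{K}_s - \mathcal{M}_q$ in $M'$, and finishes with the rainbow embedding lemma (Lemma~\ref{lem:MDr_to_rainbowKr}, via Remark~\ref{rem:embedding}) to produce the rainbow $K_q \vee K^\ell_{s-q}$ directly. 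A rigorous version of your ``layer-by-layer color-disjoint extension'' would essentially have to re-derive that embedding lemma; as written, it is a gap.

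Two secondary issues. First, your claim that iterated vertex deletion produces $G' \subseteq G$ with $|V(G')| = (1 - o(1))n$ and $\delta^c(G') \geq (\rho + \tfrac{\gamma}{2})|V(G')|$ is too strong: the deletion process can remove a constant fraction of vertices before it stabilizes, since the hypothesis $\mathbb{E}[d^c_G(v)] \geq (\rho + \gamma)n$ only bounds the total color-degree loss, not the number of deletions by $o(n)$. You do get $|V(G')| = \Omega(n)$ with the stated minimum color degree, which would suffice if the rest of the argument worked. Second, in case~(ii), a rainbow $K_t$ with $t = 2(s-r)-1$ is not a supergraph of $K_r \vee K^\ell_{s-r}$ once $\ell \geq 2$ (it has too few vertices), so the seed you extract is not itself a subgraph of the target and must be entirely rebuilt vertex by vertex, which is exactly where the gap above applies with full force.
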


It is easy to 
demonstrate the asymptotic sharpness 
of Theorem~\ref{thm:rainbowKr}, and we do so in this introduction.  
However, 
we first call attention to  
the wide gap that Theorem~\ref{thm:rainbowKr} establishes between average color degree thresholds for rainbow $K_s$ and $K_s^{\ell}$ subgraphs.    
Even for minimum color degree, it is an open and perhaps interesting problem to determine
the threshold for a fixed $s$-chromatic rainbow subgraph $F$.

\subsection*{Proof of Theorem~\ref{thm:rainbowKr}:  asymptotic sharpness}  Statement~(i) of Theorem~\ref{thm:rainbowKr} is asymptotically sharp 
for $\ell \geq 1$ and $s \geq 2$
by
a rainbow $K_{s-1}^L$ for $n = L(s-1)$.  For Statement~(ii), we prepare the following considerations.  Orient each 
edge of a simple graph $G$
(in a unique direction) 
for an {\it oriented graph} $D$.  Ignore the orientations of arcs in an oriented graph $D$ for a simple graph $G = G(D)$.  Record these orientations
by the edge-coloring $c = c(D)$ given by $c(uv) = v$ when $(u, v) \in E(D)$.

\begin{observation}
\label{obs:oriented_coloring}
For an oriented graph $D$, 
no $F \subseteq G = G(D)$ with $|E(F)| > |V(F)|$ is properly colored by $c = c(D)$ because 
in $D$ 
the in-degree of 
some $w \in V(F)$ is at least $|E(F)| / |V(F)| > 1$.  Optimally, $D$ is a regular tournament where   
$\delta^c(G) = 1 + \delta^+(D) = (n+1)/2$ and $(G, c)$ has no rainbow $F$.  
\end{observation}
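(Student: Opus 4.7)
The plan is to verify both assertions directly, since each follows from a one-line pigeonhole estimate once the coloring $c = c(D)$ is unpacked. For the first claim, I would fix $F \subseteq G(D)$ with $|E(F)| > |V(F)|$ and restrict the orientation to $V(F)$, obtaining an oriented graph $D_F$ on $|V(F)|$ vertices with exactly $|E(F)|$ arcs. The sum of in-degrees in $D_F$ equals $|E(F)|$, so averaging yields some $w \in V(F)$ with $d^-_{D_F}(w) \ge |E(F)|/|V(F)| > 1$. Two distinct arcs $(u_1, w), (u_2, w) \in E(D_F)$ then produce edges $wu_1, wu_2 \in E(F)$ that both receive color $w$ under $c(xy) = y$ whenever $(x,y) \in E(D)$, so $F$ is not properly colored; in particular, $F$ is not rainbow.

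For the second claim, I would take $D$ to be any regular tournament on $n$ vertices (which exists for odd $n$, the relevant case), giving $G(D) = K_n$ and $d^+_D(v) = d^-_D(v) = (n-1)/2$ for every $v$. At each $v$, outgoing arcs $(v,u)$ contribute color $u$ while all incoming arcs contribute color $v$; the set of colors on edges at $v$ is therefore $\{v\} \cup N^+_D(v)$, yielding $d^c_G(v) = 1 + d^+_D(v) = 1 + \delta^+(D) = (n+1)/2$. Combined with the first claim, this rules out any rainbow $F \subseteq K_n$ with $|E(F)| > |V(F)|$, confirming optimality among colorings induced by tournaments.

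There is no real obstacle here: the observation amounts to a pigeonhole estimate together with the standard existence of regular tournaments of odd order, so I expect the formal proof to take only a few lines. The only point worth highlighting is that the coloring records the head of each arc, which is exactly what forces the two coincident-color edges in the first claim and produces the "$1+$" term in the color-degree calculation of the second claim.
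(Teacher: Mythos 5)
Your proof is correct and follows the same pigeonhole argument that the paper embeds directly in the statement of the observation: averaging in-degrees over the arcs of $F$ forces some $w$ with two incoming arcs inside $F$, both of whose edges receive color $w$, and the regular-tournament calculation $d^c_G(v) = 1 + d^+_D(v)$ is exactly what the paper intends by ``$\delta^c(G) = 1 + \delta^+(D)$''. One small note: you should restrict $D$ to the arcs corresponding to $E(F)$ (as your arc count ``$|E(F)|$'' correctly indicates), rather than taking the induced subdigraph on $V(F)$, which your phrase ``restrict the orientation to $V(F)$'' might suggest; this also tidies the paper's own slight imprecision of saying ``in $D$'' where it means within $F$.
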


For Statement~(ii), 
fix $\ell \geq 1 + s - r$ for $s \geq \max \{1 + 2r, 2\}$
and set 
$n = L(s - 1 - r)$
for an odd $L \in \mathbb{N}$.  
For $G = K_{[n]}$, let $G_0 = K^L_{s-1-r}$ be one of its subgraphs with vertex partition $[n] = V_1 \, \dot\cup \, \dots \, \dot\cup \, V_{s-1-r}$
into classes $V_i$ of size $L$.  Let $D$ be a regular tournament placed on each of these classes $V_i$.  
Define $c: E(G) \to \mathbb{N}$ piecewise by letting $c|_{E(G_0)}$ map injectively into $\mathbb{N} \setminus [n]$
and by setting $c|_{E(G[V_i])} = c(D)$ for each class $V_i$.  Every $v \in [n]$ has common color-degree
$$
d_G^c(v) = n - L + 
\tfrac{L+1}{2} > \big(1 - \tfrac{1}{2(s - 1 - r)} \big) n.
$$
Moreover, $(G, c)$ admits no rainbow $K_r \vee K_{s-r}^{\ell}$, nor even a rainbow $K_{s-r}^{\ell}$.  
Indeed, let $G[U]$ be a copy of $K_{s-r}^{\ell}$ in $G$ with 
vertex partition $U = U_1 \, \dot\cup \, \dots \, \dot\cup \, U_{s-r}$ into classes $U_j$ of size $\ell$.  
Some class 
$V_i$ above satisfies 
$$
|V_i \cap U | \geq \tfrac{|U|}{s - 1 - r} = \tfrac{\ell(s-r)}{s-1-r} > \ell + 1, \quad \text{i.e.,} \quad 
|V_i \cap U | \geq \ell + 2
$$
because $\ell \geq 1 + s - r$.  Now, some class $U_j$ above similarly satisfies 
$$
\ell \geq |V_i \cap U_j| \geq \tfrac{|V_i \cap U|}{s - r} > \tfrac{\ell}{s - r} \geq 1, \quad \text{i.e.,} \quad 
\ell \geq |V_i \cap U_j| \geq 2. 
$$
Now, $F = G_0[V_i \cap U]$ has $\ell + k \geq \ell + 2$ vertices with $p \in [2, \ell]$ of them in $U_j$.  
But $|E(F)| \geq p (\ell + k - p) > \ell + k = |V(F)|$
easily holds for $p = 2$
from $k \geq 2$ and $\ell \geq 3$ and otherwise from 
$$
p + 1 + \tfrac{1}{p-1} < p + 2 \leq \ell + 2 \leq \ell + k \quad \implies \quad p^2 < (\ell + k)(p - 1)
\quad \implies \quad \ell + k < p (\ell + k - p).  
$$
Observation~\ref{obs:oriented_coloring} now guarantees that $F$ is not rainbow.

\subsection*{Itinerary of paper}
The main tools of this paper are 
regularity and stability 
methods.  
To facilitate 
these, we need some elementary ties 
among 
edge-colored graphs $(G, c)$, directed graphs $D$, and 
multigraphs $M$.  
Section~2 outlines (for pervasive future reference) these elementary details.
Section~3 presents a directed version of the Szemer\'edi Regularity Lemma~\cite{SRL}  
due to Alon and Shapira~\cite{alon2003testing}.  
Section~3 also establishes a compatible `rainbow embedding lemma' needed in this paper.  
These tools quickly give our 
proof of 
Theorem~\ref{thm:rainbowKr}, 
which we present in Section~4.  
Section~5 presents our main appeal to 
the Stability Method in the form of an `extremal lemma' and a `non-extremal lemma'.  
These tools quickly give our proof of 
Theorem~\ref{thm:rainbowKrExact}, which we also present in Section~5.  
The remainder of the paper proves these lemmas.  
Some of 
this work relies on  
the earlier 
regularity tools, but most 
depends on 
somewhat technical adhoc arguments.

\section{Elementary details:  Edge-colored Graphs, Directed Graphs, and Multigraphs}  
\label{sec:2}
Our paper frequently appeals to 
elementary ties 
among 
edge-colored graphs $(G, c)$, directed graphs $D$, and multigraphs $M$.  
It also frequently appeals to elementary observations (easy facts) about these objects.  
The current section centralizes a 
repository, aiding future reference, 
of these connections and observations.




\subsection*{Edge-colored graphs} 
Recall that an edge-colored graph $(G, c)$ is a simple graph $G = (V, E)$ 
together with an edge-coloring $c: E \to \mathcal{C}$ to some palette $\mathcal{C}$ of colors.  
Sometimes, we prefer for $(G, c)$ to be {\bf edge-minimal}, meaning that it 
contains no monochromatic path on three edges.
However, 
every edge-colored graph $(G, c)$ admits
a spanning subgraph 
$F \subseteq G$ for which 
$\big(F, c|_{E(F)}\big)
= \big(F, c_F\big)$
is edge-minimal
and 
{\bf color-degree preserving}: $d_F^{c_F}(v) = d_G^c(v)$ for all 
$v \in V$.  
Indeed, iteratively delete $uv \in E(G)$ when $c(uv)$ appears at least twice on each of $E_G(u, V)$ and $E_G(v, V)$.  


\subsection*{Simple directed graphs}
An oriented graph $D$ 
is a directed graph in which every $u \neq v \in V(D)$ yield 
at most one copy of 
either 
$(u, v)$ or $(v, u)$ in $E(D)$.
Recall that an oriented graph $D$ 
yields an edge-colored graph $(G, c)$ by 
$c(uv) = v$ for $(u, v) \in E(D)$.  
We next convert edge-colored (not-necessarily edge-minimal) 
graphs $(G, c)$ to directed (but not-necessarily oriented) graphs $D$.  
Indeed, 
for a real $m \geq 1$, define  
a {\bf $\boldsymbol{(G, c, m)}$-digraph} $D$ to be any directed graph obtained in the following way:
for each $v \in V(G)$ and $q \in {\rm im}(c)$, whenever   
$Q = \{w \in N_G(v): \, c(vw) = q\}$ satisfies $1 \leq |Q| \leq m$, 
arbitrarily select a unique $w = w(v, q) \in Q$ and place $(v, w) \in E(D)$.
Note that 
$(G, c, m)$-digraphs are {\bf simple}:  
every $(v, w) \in V \times V$ 
appears at most once in $E(D)$, and only when $v \neq w$.  
{\sl In fact, all directed graphs $D$ of this paper are (tacitly) simple.}

Define the {\bf complement} $\overline{D}$ of a 
directed graph $D$ to be the simple directed graph given by
$(u, v) \in E(\overline{D})$ if, and only if, $(u, v) \not\in E(D)$ for all $u \neq v \in V(D)$.  
When both $(v, w), (w, v) \in E(D)$, we say
that 
$\{v, w\}$ induces a {\bf $2$-cycle} in $D$.  We define 
$H = H(D)$ to be the simple 
{\bf $2$-cycle graph} on $V(D)$ whose edges $vw$ are the 2-cycles $\{v, w\}$ of $D$.  

\subsection*{$\boldsymbol{2}$-cycle `near' cliques}
The following peculiar directed graphs 
$D$
appear pervasively in this paper. 
We define these in terms of their complements $\overline{D}$, which are $[s]$-vertex oriented graphs 
where $E(\overline{D})$ consists of 
\begin{enumerate}
    \item[{\rm (i)}]  some subset 
    $\triangle_{\%}$ 
    of exactly one cyclically oriented triangle $(i, j)$, $(j, k)$, $(k, i) \in E(D)$, and 
    \item[{\rm (ii)}]  an oriented matching ${\mathcal M}_r$ of 
    some 
    $0 \leq r \leq (s-3)/2$
    many 
    pairs $(g, h)$ disjoint from $\{i, j, k\}$.
\end{enumerate}
We write $D = \mathbb{K}_s - \triangle_{\%} - {\mathcal M}_r$ for 
the complement of $\overline{D}$, where 
$\mathbb{K}_s$
denotes the set of all non-diagonal $(u, v) \in [s] \times [s]$.
When $\overline{D}$ consists only of some
oriented matching ${\mathcal M}_r$ of size $0 \leq r \leq s/2$, we write 
$D = \mathbb{K}_s - {\mathcal M}_r$.  
(The classes 
$\mathbb{K}_s - \triangle_{\%} - {\mathcal M}_r$ 
and 
$\mathbb{K}_s - {\mathcal M}_r$ clearly overlap.)


\subsection*{Multigraphs}
All multigraphs $M$ in this paper are {\bf standard}:  they are loop-free
with edge multiplicity $\mu_M(e) \leq 2$ for all $e \in \binom{V}{2}$, where $V = V(M)$.
We say that 
an edge $e \in E(M)$ is {\bf heavy} 
when $\mu_M(e) = 2$ and is 
{\bf light} when 
$\mu_M(e) = 1$.  
Note that a simple directed graph $D$ reduces to a standard multigraph $M(D)$ 
by removing the orientations of arcs $(u, v) \in E(D)$.
Thus, the simple graph $G(D)$ 
underlying $D$ is the same as the simple graph $G(M(D))$ underlying $M(D)$.  Moreover, 
the 2-cycle graph $H(D)$ is the same as the simple graph 
$H(M(D))$ of heavy edges of $M(D)$.  

Continuing, 
define the {\bf complement} $\overline{M}$ of a standard multigraph $M$ by 
$\mu_{\overline{M}}(e) = 2 - \mu_M(e)$ for all $e \in \binom{V}{2}$.
Note that 
$e(M) = \sum_{e \in \binom{V}{2}} \mu_M(e)$
counts the edges of $M$.
Fix 
$v \in V$ and $U, U' \subseteq V$. Define 
\begin{multline*}
d_M (v, U) = \sum_{u \in U} \mu_M(uv), \qquad 
d_M(v) = d_M(v, V), 
\qquad 
\delta(M) = \min \{d_M(v): \, v \in V\},  \\
\Delta(M) = \max \{d_M(v): \, v \in V\}, \qquad \text{and} \qquad   
e_M(U, U') = \sum_{u \in U} d_M(u, U')
= \sum_{u' \in U'} d_M(u', U).
\end{multline*}
Note that 
$e_M(U, U') = 
e_M(U, U' \setminus U) 
+ 
2 e_M(U \cap U')$ also holds.



Let 
an $s$-set 
$S \subseteq V$ 
induce in $\overline{M}$ an $r$-matching of light edges.  Here, 
we abuse notation from 
directed graphs and write $M[S] = \mathbb{K}_s - {\mathcal M}_r$.  
The following (non-elementary) theorem on these structures 
essentially appeared as Theorem~4.2 of~\cite{CDMT} 
and is used pervasively throughout the paper.  

\begin{theorem}[\cite{CDMT}]
    \label{lem:multigraphturan}
Fix an integer $s \geq 2$
and an $n$-vertex standard multigraph $M$.  
\begin{enumerate}
    \item[{\rm (1)}]  
If 
$e(M) > \big(1 - \tfrac{1}{s-1}\big)n^2$, then $M$
contains $\mathbb{K}_s - {\mathcal M}_q$ for some integer $0 \leq q \leq s/2$.   
\item[{\rm (2)}]  
If $e(M) 
> \big(1 -\tfrac{1}{2(s-1 - r)}\big)n^2$
for an integer $0 \leq r \leq (s-1)/2$, 
then $M$    
contains $\mathbb{K}_s - {\mathcal M}_q$ for some integer $0 \leq q \leq r$.   
\end{enumerate}
\end{theorem}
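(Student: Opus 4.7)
My plan is induction on $s$, using the bookkeeping identity $e(M) = e(G(M)) + e(H(M))$ as the central tool, where $G(M)$ is the underlying simple graph and $H(M)$ the simple graph of heavy (multiplicity-$2$) edges; this holds because each light edge of $M$ contributes $1$ to $G(M)$ alone while each heavy edge contributes $1$ to each of $G(M)$ and $H(M)$. The base case $s = 2$ is immediate: for Part (1), any edge of $M$ is $\mathbb{K}_2 - \mathcal{M}_q$ for some $q \in \{0,1\}$; for Part (2), the only admissible $r$ is $0$ and $e(M) > n^2/2$ forces some edge of multiplicity $2$.

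For the inductive step of \textbf{Part (2)}, I would start from the chain
\[
e(H(M)) \;\geq\; e(M) - \binom{n}{2} \;>\; \bigl(1 - \tfrac{1}{s - 1 - r}\bigr)\tfrac{n^2}{2} + \tfrac{n}{2} \;>\; \mathrm{ex}(n, K_{s - r}),
\]
the second inequality being a direct algebraic simplification and the third the Tur\'an bound. Thus $H(M)$ contains a heavy $K_{s-r}$ on some set $T \subseteq V$, and by supersaturation there are $\Omega(n^{s-r})$ such heavy cliques. The plan is to extend a well-chosen $T$ to the desired $\mathbb{K}_s - \mathcal{M}_q$ by adjoining $r$ further vertices $a_1, \dots, a_r \in V \setminus T$. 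Each $a_i$ should be $G(M)$-adjacent to all of $T$, heavy-adjacent to all but at most one member of $T$, and heavy-adjacent to the other $a_j$; these constraints together produce at most $r$ light edges within $T \cup \{a_1, \dots, a_r\}$, forming a matching provided that distinct $a_i, a_j$ have distinct ``light partners'' in $T$. The vertices $a_i$ are located by averaging over the family of heavy $K_{s-r}$'s to find a $T$ whose common heavy-neighborhood has size $\Omega(n)$, and then a greedy/pigeonhole selection inside that neighborhood, using the residual density of $M$.

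For \textbf{Part (1)}, the threshold and conclusion are both weaker. If $H(M) \supseteq K_s$ we are done with $q = 0$; otherwise Tur\'an's theorem gives $e(H(M)) \leq \mathrm{ex}(n, K_s)$, and the identity forces $e(G(M)) > \mathrm{ex}(n, K_s)$, so $G(M) \supseteq K_s$. Fix an $s$-clique $S \subseteq G(M)$ maximizing the number of heavy edges in $M[S]$. A swap argument then shows that no vertex $u_1 \in S$ can be incident to two light edges $u_1 u_2, u_1 u_3$ within $S$: if so, a heavy-codegree count (exploiting the large $\mu_M$-density forced by the hypothesis) locates a replacement $u_1' \in V \setminus (S \setminus \{u_1\})$ that is $G(M)$-adjacent to $S \setminus \{u_1\}$ and heavy-adjacent to strictly more of its members than $u_1$ was, contradicting maximality. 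Hence the light edges inside $S$ form a matching, of size necessarily at most $\lfloor s/2 \rfloor$.

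The \textbf{main obstacle} is the tightness of the density thresholds: the $O(n)$ error terms leave almost no slack. In Part (2), one must ensure the $r$ extension vertices contribute a genuine \emph{matching} of light edges (rather than two light edges sharing an endpoint or a light triangle), which likely requires a defect/stability form of Tur\'an's theorem to control the common heavy-neighborhood of $T$, plus a secondary induction on $r$. In Part (1), the swap step requires a careful heavy-codegree count to locate $u_1'$ outside $S$, and the boundary case where the candidate $u_1'$ would have to lie inside $S$ demands separate handling.
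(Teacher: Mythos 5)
The paper's own contribution to this theorem is only Remark~\ref{rem:CDMT}: Statement~(1) is cited outright from~\cite{CDMT} (their Theorem~4.2), and Statement~(2) is derived from Statement~(1) by a one-line parameter substitution. Concretely, applying Statement~(1) with the larger parameter $s' = 2s-2r-1$ makes the threshold $\bigl(1 - \tfrac{1}{s'-1}\bigr)n^2 = \bigl(1 - \tfrac{1}{2(s-1-r)}\bigr)n^2$, which is exactly the hypothesis of~(2); this produces a $T$ on $s'$ vertices with $\overline{M}[T]$ a $q$-matching for $q \le s'/2$, and when $q > r$ one deletes an endpoint of $q-r$ light edges and then trims to an $s$-set still inducing an $r$-matching. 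Your proposal misses this reduction entirely and instead attempts a self-contained proof of both statements, which is a genuinely different (and far more laborious) route than the paper takes.

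As a freestanding attempt, the proposal has concrete gaps. For Part~(2), your chain correctly shows $e(H(M)) > \mathrm{ex}(n,K_{s-r})$, but the excess over the Tur\'an number is only $\Theta(n)$, not $\Theta(n^2)$, so the claimed supersaturation giving $\Omega(n^{s-r})$ heavy cliques and an averaging argument producing a $T$ with $\Omega(n)$ common heavy-neighborhood is not available at this threshold; you have essentially no slack. Moreover, both the extension step (finding $a_1,\dots,a_r$ with $d_M(a_i,T) \ge 2(s-r)-1$ and distinct light partners) and the swap step in Part~(1) (locating $u_1'$ with $d_M(u_1',T) \ge 2s-3$) require degree control of the form $e_M(T,V) \gtrsim |T|\cdot \delta(M)$, but you are only given an edge-density hypothesis, not a minimum-degree one. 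This is not a cosmetic issue: one would first need a degree-reduction preprocessing (of the flavor of Observation~\ref{obs:mindeg}) to pass to an induced submultigraph with the needed min-degree before either the extension or swap counting can begin, and you do not include this. Finally, note that Part~(1) is what~\cite{CDMT} supplies; since the paper does not reprove it, the correct economical move here is to prove~(2) from~(1) as in Remark~\ref{rem:CDMT} rather than to replicate the whole argument.
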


\begin{remark}
\rm 
    \label{rem:CDMT}
    Statement~(2) 
of Theorem~\ref{lem:multigraphturan} did not 
explicitly appear in~\cite{CDMT} but it follows trivially from Statement~(1) which did.  
Indeed, 
Statement~(1) guarantees $T \subseteq V$ 
where $|T| = 2s - 2r - 1$ and $\overline{M}[T] = {\mathcal M}_q$ is a $q$-matching of light edges.  
When $q > r$, 
delete an endpoint from any $q - r$ edges from ${\mathcal M}_q$ to obtain $S \subseteq T$
where $|S| = 2s - 2r - 1 - (q - r) \geq s$ and $\overline{M}[S] = {\mathcal M}_r$
is an $r$-matching of light edges.  
\end{remark}

\subsection{Observations on edge-colored graphs}
In this subsection, 
$(G, c)$ is an edge-colored graph.  

\begin{observation}
    \label{obs:iterative_rainbow}
{\it 
Let $A \, \dot\cup \, B \subseteq V(G)$ satisfy 
$|B| > |A| \cdot e(G[A])$ and that $G[A, B]$ is properly colored by $c$.  Then some $b_0 \in B$ satisfies $c(ab_0) \not\in c(E(G[A])$ for all 
$a \in A$.  
}  
\end{observation}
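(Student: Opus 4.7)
The plan is a direct double-counting argument controlling the number of \emph{bad} vertices in $B$, where I would call $b \in B$ bad if there exists $a \in A$ with $ab \in E(G)$ and $c(ab) \in c(E(G[A]))$. Finding $b_0$ is then just a matter of showing the bad set does not exhaust $B$.

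First, I would set $\mathcal{C}_0 = c(E(G[A]))$ and note trivially $|\mathcal{C}_0| \le e(G[A])$. Next, I would argue that for every fixed $a \in A$ and every fixed color $q \in \mathcal{C}_0$, at most one vertex $b \in B$ can satisfy $c(ab) = q$: indeed, the proper coloring of $G[A, B]$ forces $c|_{E_G(a, B)}$ to be injective, so the color $q$ is attained on at most one edge incident to $a$ within $G[A, B]$. This single observation is the whole substance of the argument.

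From there the counting is immediate: the number of bad $b \in B$ is at most
\[
  \sum_{a \in A} \sum_{q \in \mathcal{C}_0} \bigl|\{b \in B : c(ab) = q\}\bigr| \;\le\; |A| \cdot |\mathcal{C}_0| \;\le\; |A| \cdot e(G[A]).
\]
Since by hypothesis $|B| > |A| \cdot e(G[A])$, at least one $b_0 \in B$ is not bad, which by definition means $c(ab_0) \notin c(E(G[A]))$ for every $a \in A$ with $ab_0 \in E(G)$, as required.

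I do not anticipate any real obstacle here; the only minor subtlety is the interpretation of ``$c(ab_0) \notin c(E(G[A]))$ for all $a \in A$'' when some pair $ab_0$ is a non-edge, which I would silently read as quantifying only over $a$ with $ab_0 \in E(G)$ (the statement is vacuous otherwise). The proper-coloring hypothesis is used only on $G[A, B]$ and only to bound $|\{b : c(ab) = q\}| \le 1$; no property of $G[A]$ beyond its edge count is needed.
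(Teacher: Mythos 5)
Your proof is correct and uses essentially the same double-counting argument as the paper: both bound the set of ``bad'' vertices in $B$ (those receiving a repeated color from some $a \in A$) by $|A| \cdot e(G[A])$ via the rainbow/injectivity property of $c$ on each $E_G(a,B)$, and conclude from $|B| > |A| \cdot e(G[A])$.
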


\begin{proof}  
Indeed, for $C = c(E(G[A]))$ and $a \in A$,  
define
$E_C(a, B) = \{ ab \in E_G(a, B): \, c(ab) \in C \}$ 
and 
$N_C(a, B) = \{ b \in B: \, ab \in E_C(a, B) \}$.
Since $E_C(a, B)$ is rainbow, $|E_C(a, B)| = |N_C(a, B)| \leq |C|$ so 
$$
\Big| \bigcup_{a \in A} N_C(a, B) \Big| \leq \sum_{a \in A} \big|N_C(a, B)\big| \leq |A| |C| \leq |A| \cdot e(G[A]) < |B|  
$$
and the existence of $b_0 \in B$ follows. 
\end{proof}

Iterative applications of Observation~\ref{obs:iterative_rainbow} yield the following one.

\begin{observation}
    \label{obs:proper_to_rainbow2}
{\it 
Let $A \, \dot\cup \, B \subseteq V(G)$ satisfy the following conditions:
\begin{enumerate}
\item[(i)]
$K_a^{\ell}$ is a rainbow subgraph of $G[A]$; 
\item[(ii)] 
$K_b^L$ is a subgraph of $G[B]$ which is properly colored by $c$; 
\item[(iii)]
$G[A, B] = K[A, B]$ is properly colored by $c$.
\end{enumerate}
Then $(G, c)$ admits a rainbow $K_a^{\ell} \vee K_b^k$
whenever $L \geq L_0(k, \ell, a, b)$ is sufficiently large.  
}  
\end{observation}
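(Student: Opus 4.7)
The plan is to fix the vertex set $A' = V(K_a^\ell) \subseteq A$ and $B' = V(K_b^L) \subseteq B$, and to iteratively extend the rainbow $K_a^\ell$ (with edge color set $C_0$ of size $\binom{a}{2}\ell^2$) by one vertex at a time, drawing each new vertex from the parts $B_1,\ldots,B_b$ of $K_b^L$. Every extension will be an application of Observation~\ref{obs:iterative_rainbow} to the subgraph $H := K_a^\ell \cup K_b^L \cup K[A',B']$ (in place of $G$); the technical content is a preparatory pruning of $B'$ that keeps the hypothesis of that observation valid throughout.

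First, for each part $B_j$ I would pass to $B_j^\flat \subseteq B_j$ consisting of vertices $v$ whose bipartite colors $\{c(va) : a \in A'\}$ avoid $C_0$. Since $K[A',B']$ is properly colored, each pair $(\gamma,a) \in C_0 \times A'$ forbids at most one vertex, so $|B_j^\flat| \ge L - O_{a,\ell}(1)$. The subtle obstacle is that although $K[A',B']$ and $K_b^L$ are each properly colored, their union need not be proper at a vertex $v \in B'$: a bipartite color $c(v,a)$ and a within-$B'$ color $c(v,v')$ can coincide. For $j \ne j'$, $a \in A'$, and $v' \in B_{j'}^\flat$, set $S_{j,a,v'} = \{v \in B_j^\flat : c(v,a) = c(v,v')\}$. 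Properness of $K_b^L$ at $v$ forces $c(v,v') = c(v,a)$ to admit at most one $v'$ for each fixed $(v,a)$, so double-counting yields $\sum_{v' \in B_{j'}^\flat}|S_{j,a,v'}| \le |B_j^\flat|$; summing over $a$ and $j$ and applying Markov's inequality isolates $B_{j'}^{\mathrm{nice}} \subseteq B_{j'}^\flat$ of size $\Omega(L)$ on which $\sum_{a,j}|S_{j,a,v'}|$ is $O_{a,b,\ell}(1)$.

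I would then iteratively pick $v_{j,i} \in B_j^{\mathrm{nice}}$ in any fixed order of $(j,i) \in [b] \times [k]$. At step $(j,i)$, let $\tilde A = A' \cup \{\text{previously picked vertices}\}$ and let $\tilde B = B_j^{\mathrm{nice}} \setminus \tilde A \setminus \bigcup_{v' \in \tilde A \setminus A', \, a \in A'} S_{j,a,v'}$; by the niceness of the prior $v'$'s only $O_{a,b,k,\ell}(1)$ vertices are removed from the reservoir. Since $H$ has no edges inside a single part of $K_b^L$, the restriction $H[\tilde A, \tilde B]$ is properly colored (at $\tilde A$-vertices by the hypotheses, vacuously at prior picks in $B_j$, which have no edge into $\tilde B$; at $\tilde B$-vertices by the $S$-exclusion), while both $|\tilde A|$ and $e(H[\tilde A])$ are $O_{a,b,k,\ell}(1)$. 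For $L$ large the hypothesis $|\tilde B| > |\tilde A| \cdot e(H[\tilde A])$ of Observation~\ref{obs:iterative_rainbow} holds, and the observation yields $v_{j,i} \in \tilde B$ with $c(v_{j,i} u) \notin c(E(H[\tilde A]))$ for every $u \in \tilde A$. The new edges at $v_{j,i}$ go only to $\tilde A \setminus B_j$, and they are pairwise distinct in color (bipartite edges by properness of $K[A',B']$, within-$B'$ edges by properness of $K_b^L$, and across the two by the $S$-exclusion), so the partial rainbow structure extends by one vertex; after $bk$ iterations the resulting vertex set supports the desired rainbow $K_a^\ell \vee K_b^k$. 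The main obstacle is exactly the diagonal coincidence $c(v,a)=c(v,v')$, which without the averaging/Markov preparation can in principle involve arbitrarily many $v \in B_j$; once the niceness filter is in place, the iteration is a routine repeated application of Observation~\ref{obs:iterative_rainbow}.
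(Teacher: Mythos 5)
Your proof is correct, and it proceeds by the route the paper indicates --- iterating Observation~\ref{obs:iterative_rainbow} --- but it also identifies and repairs a genuine subtlety that the paper's one-sentence sketch (``iterative applications of Observation~\ref{obs:iterative_rainbow} yield the following one'') glosses over. The point you isolate is real: once the partial clique $\tilde A$ contains a vertex $v'$ from $B'$, the hypothesis of Observation~\ref{obs:iterative_rainbow} (properness of $H[\tilde A, \tilde B]$) can fail on the $\tilde B$ side, because hypotheses~(ii) and~(iii) separately make $c(v,\cdot)$ injective on $A'$ and on $B' \setminus B_j$ but say nothing about the cross coincidence $c(va) = c(vv')$; indeed for a fixed pair $(a,v')$ the set $S_{j,a,v'}$ can have size $L$. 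Your averaging step --- $\sum_{v'}|S_{j,a,v'}| \le |B_j^\flat|$ via properness of $K_b^L$ at each $v$, then Markov to pass to $\Omega(L)$ ``nice'' vertices with $O(1)$ total $S$-mass --- is exactly what is needed, and after that the iteration is routine. Two minor remarks. The initial pruning to $B_j^\flat$ (discarding vertices whose bipartite colours hit $C_0$) is harmless but redundant, since each application of Observation~\ref{obs:iterative_rainbow} already excludes the colours of $E(H[\tilde A]) \supseteq E(K_a^\ell)$. And the niceness filter can equivalently be phrased as a first-moment argument (choose the $v_{j,i}$ uniformly at random inside each part and bound the expected number of colour coincidences by $O(1/L)$ per pair), which is the same averaging with the Markov step made implicit; either phrasing is fine and both correctly fill what the paper leaves to the reader.
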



\subsection{Observations on $\boldsymbol{(G, c, m)}$-digraphs}  
In this subsection, $(G, c)$ is an $n$-vertex edge-colored graph, $D$ is a $(G, c, m)$-digraph thereof (with $m \geq 1$), and $H = H(D)$ is the 2-cycle
graph of $D$.  

\begin{observation}
    \label{obs:GemD}
    {\it 
When $(G, c)$ is edge-minimal, every $uv \in E(G)$
yields $(u, v) \in E(D)$ or $(v, u) \in E(D)$.  
}  
\end{observation}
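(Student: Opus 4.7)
The plan is to reduce the assertion to the sharper structural fact that, for every edge $uv \in E(G)$ with $q = c(uv)$, at least one of
\[
Q_u := \{w \in N_G(u) : c(uw) = q\} \quad \text{or} \quad Q_v := \{w \in N_G(v) : c(vw) = q\}
\]
is a singleton. Once this is in hand, the observation is immediate from the construction of $D$: if (say) $|Q_u| = 1$, then $Q_u = \{v\}$ satisfies $1 \leq |Q_u| \leq m$ (using $m \geq 1$), forcing $w(u, q) = v$ and placing $(u, v) \in E(D)$; the symmetric outcome holds when $|Q_v| = 1$.

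To establish the reduction, I would proceed by contradiction: suppose $|Q_u| \geq 2$ and $|Q_v| \geq 2$, and choose $u' \in Q_u \setminus \{v\}$ and $v' \in Q_v \setminus \{u\}$. The three edges $uu'$, $uv$, $vv'$ all carry color $q$. If $u' \neq v'$, then the four distinct vertices $u', u, v, v'$ form a monochromatic path on three edges, directly contradicting edge-minimality of $(G, c)$. Otherwise $u' = v'$, and $\{u, v, u'\}$ spans a monochromatic triangle; this too is forbidden, since the iterative deletion construction of edge-minimal subgraphs described in Section~\ref{sec:2} immediately eliminates any edge of such a triangle (its color appears at least twice at each endpoint).

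The main (and essentially only) subtlety is the monochromatic triangle case: the bare phrasing ``no monochromatic path on three edges'' does not by itself exclude a monochromatic triangle. The argument therefore leverages the strictly stronger property actually enjoyed by edge-minimal subgraphs as produced by the iterative procedure of Section~\ref{sec:2}, namely that no edge $uv$ has $c(uv)$ appearing at least twice at each of $u$ and $v$. With that structural input in hand, the remainder of the proof is a single line.
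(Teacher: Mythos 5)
Your proof is correct and is essentially the paper's one-line argument (``Indeed, $c(uv)$ is unique to $u$ or to $v$'') spelled out in full. You also rightly flag that the literal wording of edge-minimality (``no monochromatic path on three edges'') does not by itself exclude a monochromatic triangle, whereas the iterative-deletion construction in Section~\ref{sec:2} does; the paper's terse proof tacitly relies on this stronger guarantee, and your noting this makes the argument more watertight than the original.
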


\begin{proof}
Indeed, $c(uv)$ is unique to $u$ or to $v$.  
\end{proof}

\begin{observation}
\label{obs:GtoDoutdeg}
{\it 
Every $v \in V(G)$ satisfies $d^+_D(v) \ge d^c_G(v) - \lfloor d_G(v) / (m+1) \rfloor$.  
Equality holds when $m = n - 1$.  
}  
\end{observation}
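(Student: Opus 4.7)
The plan is to relate $d^+_D(v)$ directly to $d^c_G(v)$ by splitting the colors appearing at $v$ according to whether they appear at most $m$ times, and then to bound the number of "frequent" colors by a simple pigeonhole.

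Fix $v \in V(G)$, and let $C_v \subseteq \mathrm{im}(c)$ be the set of colors appearing on $E_G(v, V)$, so that $|C_v| = d^c_G(v)$. For each $q \in C_v$, write $Q_v(q) = \{ w \in N_G(v) : c(vw) = q \}$, and partition $C_v = C_v^{\le m} \, \dot\cup \, C_v^{> m}$ according to whether $|Q_v(q)| \le m$ or $|Q_v(q)| > m$. By the very definition of a $(G,c,m)$-digraph, each $q \in C_v^{\le m}$ contributes exactly one out-arc $(v, w(v,q)) \in E(D)$, while colors in $C_v^{> m}$ contribute no out-arcs from $v$. Moreover distinct colors yield distinct heads $w(v,q)$, so
\[
d^+_D(v) \;=\; \bigl| C_v^{\le m} \bigr| \;=\; d^c_G(v) - \bigl| C_v^{> m} \bigr|.
\]

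The first step is done; the only remaining point is to bound $|C_v^{> m}|$ from above. Since the sets $Q_v(q)$ for $q \in C_v$ partition $N_G(v)$, and each $q \in C_v^{> m}$ forces $|Q_v(q)| \ge m+1$, one has
\[
d_G(v) \;=\; \sum_{q \in C_v} |Q_v(q)| \;\ge\; (m+1) \bigl| C_v^{> m} \bigr|,
\]
hence $|C_v^{> m}| \le d_G(v)/(m+1)$, and integrality gives $|C_v^{> m}| \le \lfloor d_G(v)/(m+1) \rfloor$. Combining with the previous display yields the claimed inequality $d^+_D(v) \ge d^c_G(v) - \lfloor d_G(v)/(m+1) \rfloor$.

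For the equality statement, observe that when $m = n-1$ every color $q \in C_v$ satisfies $|Q_v(q)| \le |N_G(v)| \le n-1 = m$, so $C_v^{> m} = \emptyset$ and $d^+_D(v) = d^c_G(v)$. On the other hand $d_G(v) \le n - 1 < n = m+1$ gives $\lfloor d_G(v)/(m+1) \rfloor = 0$, so the two sides agree. I do not expect any real obstacle here: the whole argument is a one-line pigeonhole on the color multiplicities at $v$, and the equality case for $m = n-1$ is immediate from the trivial bound $d_G(v) \le n-1$.
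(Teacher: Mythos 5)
Your proof is correct and takes essentially the same approach as the paper's: both count the colors at $v$ appearing more than $m$ times and bound this count via the pigeonhole inequality $d_G(v) \ge (m+1)\big(d^c_G(v) - d^+_D(v)\big)$. Your write-up is simply a more detailed expansion of the paper's one-line argument, and your treatment of the $m = n-1$ equality case is the intended one.
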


\begin{proof}
Indeed, 
$d_G(v)$ is $(m+1)$-fold  
in $d_G^c(v) - d_D^+(v)$ many colors, so 
$d_G(v) \geq (m + 1) 
\big(
d_G^c(v) - d_D^+(v) \big)$.
\end{proof}


\begin{observation}\label{obs:prop_coloring}
{\it 
The restriction 
$c|_{E(H)}$ 
is proper.   
}
\end{observation}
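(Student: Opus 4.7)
The plan is a one-line contradiction drawn directly from the definition of a $(G, c, m)$-digraph. I would suppose toward contradiction that some vertex $v \in V(G)$ is incident in $H$ to two distinct edges $vw_1, vw_2$ with $c(vw_1) = c(vw_2) = q$. Since $vw_1, vw_2 \in E(H)$, the definition of the 2-cycle graph $H = H(D)$ forces $\{v, w_1\}$ and $\{v, w_2\}$ to be 2-cycles of $D$, so in particular both arcs $(v, w_1)$ and $(v, w_2)$ lie in $E(D)$.

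Next I would revisit the construction: for the pair $(v, q)$, the set $Q = \{w \in N_G(v) : c(vw) = q\}$ contributes at most one outgoing arc from $v$, namely the uniquely chosen representative $w = w(v, q) \in Q$ (when $1 \le |Q| \le m$). Since $w_1 \neq w_2$ both lie in $Q$ and yet both $(v, w_1), (v, w_2) \in E(D)$, the uniqueness clause of the construction is violated, and the observation follows.

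I do not expect any real obstacle; the argument is pure bookkeeping against the definition. The bound $|Q| \le m$ plays no role beyond certifying that the arcs were eligible for inclusion in $D$ in the first place. The only feature used is the \emph{uniqueness} of the representative $w(v, q)$ per color at each vertex, which already forces the out-arcs at any fixed vertex of $D$ to carry pairwise distinct colors; the statement is then simply the restriction of that fact to the symmetric (2-cycle) part of $D$, namely to $H$.
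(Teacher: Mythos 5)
Your proof is correct and takes essentially the same approach as the paper: the paper's one-line proof observes that $E_G(v, N_D^+(v))$ is rainbow for every $v$, which is exactly the positive rephrasing of your uniqueness-of-$w(v,q)$ argument, applied via $E_H(v, V) \subseteq E_G(v, N_D^+(v))$.
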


\begin{proof}  
Indeed, 
$E_G\big(v, N_D^+(v)\big)$ is rainbow for every $v \in V(G)$.   
\end{proof}  

\begin{observation}\label{obs:H_edges}
$e(H) + e(G) \geq \sum_{v \in V(G)} d^+_D(v)$.
{\it Equality holds when $(G, c)$ is edge-minimal  
$($cf.~Observation~\ref{obs:GemD}$)$.  }  
\end{observation}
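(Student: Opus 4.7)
\medskip

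The plan is a direct double-counting of arcs in $D$ against edges in $G$ and in $H$. For each edge $vw \in E(G)$, let $a(vw) \in \{0,1,2\}$ denote the number of arcs of $D$ sitting above it, i.e., $a(vw) = \mathbf{1}[(v,w) \in E(D)] + \mathbf{1}[(w,v) \in E(D)]$. Since every arc of $D$ lies above some edge of $G$, summing over directed out-neighborhoods gives
\[
\sum_{v \in V(G)} d^+_D(v) \;=\; |E(D)| \;=\; \sum_{vw \in E(G)} a(vw).
\]

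Next I would partition $E(G)$ by the value of $a$. Let $E_k = \{vw \in E(G) : a(vw) = k\}$ for $k \in \{0,1,2\}$. By definition, $E_2$ is exactly the edge set of the $2$-cycle graph, so $e(H) = |E_2|$, while $e(G) = |E_0| + |E_1| + |E_2|$. Substituting,
\[
\sum_{v \in V(G)} d^+_D(v) \;=\; |E_1| + 2|E_2| \;=\; \bigl(|E_0| + |E_1| + |E_2|\bigr) + |E_2| - |E_0| \;=\; e(G) + e(H) - |E_0|.
\]
Rearranging yields $e(H) + e(G) = \sum_v d^+_D(v) + |E_0| \ge \sum_v d^+_D(v)$, which is the desired inequality. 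Equality holds exactly when $|E_0| = 0$, i.e., when every edge of $G$ gives rise to at least one arc in $D$.

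Finally, to handle the equality clause, I would invoke Observation~\ref{obs:GemD}: when $(G,c)$ is edge-minimal, every edge $uv \in E(G)$ produces $(u,v) \in E(D)$ or $(v,u) \in E(D)$, so $E_0 = \emptyset$ and the bound is tight. There is no real obstacle here; the entire argument is a bookkeeping identity, and the only content beyond that is the appeal to the previously established edge-minimality observation.
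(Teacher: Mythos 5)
Your proof is correct and follows essentially the same counting idea as the paper's one-line justification (that every arc of $D$ sits above an edge of $G$, and the two-arc edges are precisely $E(H)$); you have simply made the bookkeeping via $E_0, E_1, E_2$ explicit, which the paper leaves implicit.
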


\begin{proof} 
Indeed, $(v, w) \in E(D)$ yields $vw \in E(G)$ 
and 
$(v, w), (w, v) \in E(D)$ yields 
$vw \in E(H)$.
\end{proof}



\subsection{Observations on standard multigraphs}
In this subsection, $M$ is an $n$-vertex standard multigraph and $H = H(M)$ is its simple graph of heavy edges.  

\begin{observation}
\label{obs:mindeg}
{\it 
Fix $0 < \alpha < \beta^2 < \beta < 1$ and a sufficiently large integer $n \geq n_0(\alpha, \beta)$.  
Let $M$ 
satisfy that $e(M) \geq (d - \alpha)n^2$
and that 
every induced $m$-vertex $M' \subseteq M$
with $m \geq (1 - \beta)n$ has 
$e(M') \leq d m^2$.  
Then 
some such $M_0'$ 
also satisfies 
$\delta(M'_0) \geq 2 (d - \beta)m$.
}
\end{observation}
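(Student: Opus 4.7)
The plan is to build $M_0'$ by greedy minimum-degree pruning: set $M_n := M$, and while the current multigraph $M_i$ on $i$ vertices has $\delta(M_i) < 2(d-\beta)i$, remove a vertex of minimum degree to obtain $M_{i-1}$; otherwise halt with $M_0' := M_i$. The stopping condition automatically yields the desired $\delta$-bound, so it suffices to show the process halts at some $i \ge m^* := \lceil (1-\beta) n \rceil$.

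Suppose for contradiction that a vertex $v_i$ is deleted at every step $i \in \{m^*+1, \ldots, n\}$, each with $d_{M_i}(v_i) = \delta(M_i) < 2(d-\beta)\, i$. Writing $x := n - m^*$ and using the identity $\sum_{i = m^*+1}^{n} i = x(2n - x + 1)/2$, the total edge loss satisfies
\[
e(M) - e(M_{m^*}) \;<\; (d - \beta)\, x\, (2n - x + 1).
\]
Combining with the bounds $e(M) \ge (d - \alpha) n^2$ and $e(M_{m^*}) \le d (m^*)^2 = d(n - x)^2$, and expanding (the $dn^2$, $\pm 2dnx$, and $dx^2$ terms all cancel), this collapses to
\[
\beta\, x\, (2n - x) \;<\; \alpha\, n^2 + (d - \beta)\, x.
\]

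Finally, $m^* = \lceil (1 - \beta) n \rceil$ pins $x$ into the interval $[\beta n - 1, \ \beta n]$; and $d - \beta < 1$ follows from $e(M) < n^2$ in a standard multigraph together with $\alpha < \beta^2 < \beta$. Hence the left-hand side above is at least $(2 - \beta) \beta^2\, n^2 - O(n)$, while the right-hand side is at most $\alpha\, n^2 + O(n)$. Since $\beta < 1$ gives $2 - \beta > 1$, and $\alpha < \beta^2$ by hypothesis,
\[
(2 - \beta)\, \beta^2 \;>\; \beta^2 \;>\; \alpha,
\]
so the leading $n^2$ terms strictly conflict once $n \ge n_0(\alpha, \beta)$ is large enough to absorb the $O(n)$ corrections. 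The one step that really demands care is keeping the full identity $\sum_{i = m^*+1}^{n} i = x(2n - x + 1)/2$ in play: the looser bound $\le nx$ discards the $-\beta x^2$ term that converts the factor $2 - d$ (potentially close to $1$) into $2 - \beta > 1$, and only the latter is guaranteed by the hypotheses to exceed $\alpha/\beta^2$.
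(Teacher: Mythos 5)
Your proposal is correct and takes essentially the same approach as the paper: greedily delete minimum-degree vertices and compare the edge count from below against the hypothesized bound $e(M') \le d m^2$ from above at the first forbidden size. The only substantive difference is arithmetic. You keep the exact sum $\sum_{i=m^*+1}^n i$ when bounding the edge loss, whereas the paper uses the coarser bound $2t(d-\beta)n$, and this refinement does close a small gap: the paper's displayed inequality $(d-\alpha)n^2 - 2t(d-\beta)n < d(n-t)^2$ in fact yields $d\rho^2 - 2\beta\rho + \alpha > 0$, not the stated $\rho^2 - 2\beta\rho + \alpha > 0$, so passing to the latter tacitly uses $d \le 1$. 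Your bookkeeping lands on the coefficient $(2-\beta)\beta^2 > \beta^2 > \alpha$ with no dependence on $d$, so the observation follows exactly as stated. In the paper's actual uses $d \in \{1 - \tfrac{1}{s-1}, \tfrac12\} < 1$, so the coarser estimate suffices there too; both routes are valid, yours is just tighter.
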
  

\begin{proof}  
Indeed, set $M_0 = M$ and 
$M_t = M_{t-1} - v_t$
for 
some integer 
$t = \rho n$ 
with $\rho \in [0, 1]$
and some vertex $v_t \in V(M_{t-1})$ with $d_{M_{t-1}}(v) < 2(d - \beta) |V(M_{t-1})|$.  
 Assume, for a contradiction, 
that 
$\rho \geq \beta$
is possible in this context and take 
$t = \lceil \beta n \rceil$ precisely.
Then 
$$
(d - \alpha) n^2 - 2 t (d - \beta) n  <  
e(M_t) 
\leq 
d 
(n - t)^2      
$$
 gives 
$\rho^2 - 2\beta \rho + \alpha > 0$, so 
the quadratic formula yields
$\rho > \beta + \sqrt{\beta^2 - \alpha} 
> 
(1/n) \lceil \beta n \rceil = \rho$.
\end{proof}

\begin{observation}
\label{obs:extmultiHdeg}
{\it  
Let 
$\delta(M) \ge 2 \big(1 - \tfrac{1}{s-1} - \alpha \big)n$ and let 
$U \subseteq V(M)$ satisfy at least one of the properties:  
\begin{enumerate}
    \item[{\rm ($P_1$)}] 
{\it $U$ is independent and $|U| \geq \big(\tfrac{1}{s-1} - \beta\big)n$}; 
\item[{\rm ($P_2$)}]  
{\it $U$ has no three points spanning five or more edges of $M$ and 
$|U| \geq \big(\tfrac{2}{s-1} - \beta\big)n$.}     
\end{enumerate}  
Then 
$d_H\big(u, \overline{U}\big) \ge \big|\overline{U}\big| - (4\alpha  + 2 \beta) n$ for all $u \in U$.}
\end{observation}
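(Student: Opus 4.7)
The plan is to reduce the claim to a single upper bound on $d_M(u,U)$ and then verify it in each case. Since each non-heavy pair from $u$ to $\overline{U}$ contributes at most $1$ to $d_M(u,\overline{U})$,
$$
d_H(u,\overline{U}) \;\geq\; d_M(u,\overline{U}) - |\overline{U}| \;=\; d_M(u) - d_M(u,U) - |\overline{U}|.
$$
The hypothesis $d_M(u) \geq 2\bigl(1-\tfrac{1}{s-1}-\alpha\bigr)n$ therefore turns the desired bound $d_H(u,\overline{U}) \geq |\overline{U}| - (4\alpha+2\beta)n$ into the single inequality
$$
(\star) \qquad d_M(u,U) \;\leq\; 2|U| - \tfrac{2n}{s-1} + (2\alpha+2\beta)n.
$$

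Under $(P_1)$, $U$ is independent in $M$ and so $d_M(u,U)=0$, while $|U| \geq \bigl(\tfrac{1}{s-1}-\beta\bigr)n$ makes the right-hand side of $(\star)$ at least $2\alpha n \geq 0$. Under $(P_2)$, partition $U\setminus\{u\}$ as $A\,\dot\cup\, B\,\dot\cup\, C$, where $A = N_H(u)\cap U$, $B = \{w\in U\setminus\{u\} : \mu_M(uw)=1\}$, and $C$ is the set of non-neighbors of $u$ in $U\setminus\{u\}$, so that $d_M(u,U) = 2|A| + |B|$. The no-five-edge-triple condition of $(P_2)$, applied to $\{u,v,v'\}$ with $v,v'\in A$ and to $\{u,v,w\}$ with $v\in A$ and $w\in B$, forces both that $A$ is independent in $M$ and that $\mu_M(vw)\leq 1$ for all $v\in A$ and $w\in B$.

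If $A=\emptyset$ then $d_M(u,U)=|B|\leq|U|-1$, and combined with $|U|\geq\bigl(\tfrac{2}{s-1}-\beta\bigr)n$ this already gives $(\star)$. Otherwise, fix any $v\in A$ and expand its $M$-degree over the partition $\{u\}\,\dot\cup\,(A\setminus\{v\})\,\dot\cup\, B\,\dot\cup\, C\,\dot\cup\,\overline{U}$ of $V(M)\setminus\{v\}$: the two structural facts above give $d_M(v,A\setminus\{v\})=0$ and $d_M(v,B)\leq|B|$, the remaining contributions are at most $2+2|C|+2|\overline{U}|$, and substituting $|C|=|U|-1-|A|-|B|$ collapses the sum to $d_M(v) \leq 2n - 2|A| - |B|$. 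Comparing with $d_M(v) \geq 2\bigl(1-\tfrac{1}{s-1}-\alpha\bigr)n$ yields $2|A|+|B| \leq \tfrac{2n}{s-1} + 2\alpha n$, and combined with $|U|\geq\bigl(\tfrac{2}{s-1}-\beta\bigr)n$ this gives $(\star)$. The one non-obvious step is this last one: the degree hypothesis at $u$ itself does not bound $d_M(u,U)$ once $|U|$ grows past $\tfrac{2n}{s-1}$, so the argument must transfer to a heavy neighbor $v\in A$ and exploit the structural sparsity forced on the $M$-neighborhood of $v$ inside $U$.
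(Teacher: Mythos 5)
Your proof is correct and follows essentially the same strategy as the paper's: reduce the claim to a bound on $d_M(u,U)$, handle $(P_1)$ trivially via independence, and in $(P_2)$ pass to a heavy neighbor $v$ of $u$ in $U$ (when one exists) and exploit the no-five-edge-triple constraint together with $\delta(M)$ to bound $d_M(u,U) \le 2n - \delta(M)$. The only cosmetic difference is that you expand $d_M(v)$ over the explicit partition $\{u\}\,\dot\cup\,(A\setminus\{v\})\,\dot\cup\,B\,\dot\cup\,C\,\dot\cup\,\overline{U}$, whereas the paper reaches the same bound by summing the pairwise inequality $\mu_M(uw)+\mu_M(vw)\le 2$ over $w \in U$; your uniform reduction to the single target $(\star)$ in both cases is a tidy reorganization of the same argument.
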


\begin{proof}  
Indeed, 
fix $u \in U$ and consider the cases $(P_1)$ and $(P_2)$.    
\medskip 

\noindent {\sc Case 1 $\big(${\rm $U$ satisfies $(P_1)$}$\big)$.}
Let $L = M \setminus H$ be the 
(simple) graph of light edges.  Then
\begin{multline*}
d_H\big(u, \overline{U}\big) 
= 
d_M\big(u, \overline{U}\big) -  
d_L\big(u, \overline{U}\big)   
\stackrel{(P_1)}{\geq}  
\delta(M) - \big|\overline{U}\big|
= 
\delta(M) - n + |U|
\geq 
2 \big(1 - \tfrac{1}{s-1} - \alpha \big)n
- n + |U| \\
\stackrel{(P_1)}{\geq}  
\big(1 - \tfrac{1}{s-1} \big)n - 2\alpha n - \beta n 
\stackrel{(P_1)}{\geq}  
n - |U| - 2 (\alpha + \beta)  n
= 
\big| \overline{U} \big| - 2 (\alpha + \beta) n.  
\end{multline*}

\noindent {\sc Case 2 $\big(${\rm $U$ satisfies $(P_2)$}$\big)$.}
Either $d_M(u, U) = d_L(u, U) \leq |U| - 1$ or $uv \in E(H[U])$ gives
\begin{multline}
\label{eqn:4.29.2024.3:17p}
2|U| 
\stackrel{(P_2)}{\geq} \sum_{w \in U} 
\big(\mu_M(vw) + \mu_M(uw) \big)
= 
d_M(v, U) + d_M(u, U)
\geq 
\delta(M) - 2 \big|\overline{U}\big| + d_M(u, U) \\
\implies \qquad 
d_M(u, U) \leq 2n - \delta(M) \leq \big(\tfrac{2}{s-1} + 2\alpha\big) 
\stackrel{(P_2)}{\leq}
|U| + (2\alpha + \beta)n, 
\end{multline}  
so 
\begin{multline*}  
d_H\big(u, \overline{U}\big) \geq \delta(M) - d_L\big(u, \overline{U}\big) - d_M(u, U) 
\stackrel{\eqref{eqn:4.29.2024.3:17p}}{\geq}  
\delta(M) - \big|\overline{U}\big| - |U| - (2\alpha + \beta)n  =  
\delta(M) - n - (2\alpha + \beta)n  \\
\geq 
n - \big(\tfrac{2}{s-1} + 4\alpha + \beta \big)n 
\stackrel{(P_2)}{\geq}
n - |U| - (4 \alpha + 2\beta)n = 
\big|\overline{U}\big| - (4 \alpha + 2 \beta)n.  
\end{multline*}  
\end{proof}

\begin{observation}
\label{obs:deg}
{\it  
Fix integers 
$q \geq p \geq 2$,   
a 
$\beta \in (0, 1)$, 
and 
a sufficiently small 
$\alpha > 0$.   
Let 
$\delta(M) \ge  2 \big(\tfrac{q-2}{q-1} - \alpha\big)n$.  
Then 
all $U \subseteq V(M)$ with $|U| \ge \big(\tfrac{p-1}{q-1} - \alpha\big)n$ and $v \in V$ 
satisfy 
$d_M(v, U) \geq 
2\big(\tfrac{p-2}{p-1} - \beta) |U|$.  
}
\end{observation}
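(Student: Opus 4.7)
The plan is a direct two-step degree count. First, I use that $M$ is standard, so $d_M(v, V \setminus U) \leq 2|V \setminus U| = 2(n - |U|)$; combined with $d_M(v) \geq \delta(M) \geq 2(\tfrac{q-2}{q-1} - \alpha)n = 2n - \tfrac{2n}{q-1} - 2\alpha n$, this immediately yields
\begin{equation*}
d_M(v, U) \;\geq\; \delta(M) - 2(n - |U|) \;\geq\; 2|U| - \tfrac{2n}{q-1} - 2\alpha n.
\end{equation*}

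Next, I convert the $\tfrac{2n}{q-1}$ term into a fraction of $|U|$ using the hypothesis on $|U|$. Rearranging $|U| \geq (\tfrac{p-1}{q-1} - \alpha)n$ to $\tfrac{2n}{q-1} \leq \tfrac{2|U|}{p-1} + \tfrac{2\alpha n}{p-1}$ and substituting into the previous display gives
\begin{equation*}
d_M(v, U) \;\geq\; 2\,\tfrac{p-2}{p-1}\,|U| - \tfrac{2\alpha p}{p-1}\,n,
\end{equation*}
which agrees with the target $2(\tfrac{p-2}{p-1} - \beta)|U|$ up to an additive error $\tfrac{2\alpha p}{p-1}n$ that must be absorbed by $2\beta |U|$.

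Finally, to absorb the error I choose $\alpha$ small enough that $\beta|U| \geq \tfrac{\alpha p}{p-1}\,n$. Applying the lower bound on $|U|$ once more, it suffices to have $\beta\big(\tfrac{p-1}{q-1} - \alpha\big) \geq \tfrac{\alpha p}{p-1}$, a linear inequality in $\alpha$ that holds for all $\alpha$ sufficiently small relative to $\beta$, $p$, and $q$. There is no real obstacle here; the entire argument is an elementary counting exercise, with the standard-multigraph bound $\mu_M \leq 2$ controlling the external degree $d_M(v, V \setminus U)$ and the precise numerology of the hypothesis on $|U|$ converting the $1/(q-1)$ defect in $\delta(M)$ into the $1/(p-1)$ defect sought for $d_M(v, U)/(2|U|)$.
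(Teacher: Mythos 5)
Your proof is correct and follows essentially the same approach as the paper: both derive the intermediate inequality $d_M(v, U) \geq \delta(M) + 2|U| - 2n$ (you via the upper bound $d_M(v, V \setminus U) \leq 2|V \setminus U|$, the paper via the equivalent $d_{\overline{M}}(v, U) \leq \Delta(\overline{M})$) and then substitute the hypotheses on $\delta(M)$ and $|U|$, taking $\alpha$ small to close the gap. The only difference is bookkeeping: the paper factors out $2|U|$ and bounds the resulting multiplicative coefficient, while you isolate a single additive error term $\tfrac{2\alpha p}{p-1}n$ and absorb it into $2\beta|U|$.
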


\begin{proof}  
Indeed, $d_M(v, U) + d_{\overline{M}}(v, U)$ is either $2|U|$ or 
$2(|U| - 1)$, so 
$$
d_M(v, U) \geq 2 |U| - 2 - \Delta\big(\overline{M}\big)
= 2|U| - 2n + \delta(M) 
\geq 2 \Big(1 - \big(\tfrac{1}{q-1} + \alpha\big) \tfrac{n}{|U|} \Big)|U| 
\geq 2 \Big(1 - \tfrac{1 + \alpha (q - 1)}{p - 1 - \alpha (q-1)} \Big)|U|, 
$$
which is 
$2\big(\tfrac{p-2}{p-1} - \beta) |U|$ when $\alpha = \alpha(\beta, q)$ is sufficiently small.  
  \end{proof}

\section{Digraph Regularity and Rainbow Embedding}
\label{sec:SRL}
Digraph regularity plays a crucial role in our proof of 
Theorem~\ref{thm:rainbowKr}.
First, 
we prepare 
upcoming 
Theorem~\ref{thm:rainbowKr}, 
which 
is 
a directed version 
of the Szemer\'edi Regularity Lemma~\cite{SRL}  
due to Alon and Shapira~\cite{alon2003testing}.  
Second, we develop and prove 
upcoming Lemma~\ref{lem:MDr_to_rainbowKr}, 
which is a compatible `rainbow clique embedding lemma' 
(which involves 
the directed structures 
$\mathbb{K}_s - \triangle_{\%} - \mathcal{M}_r$
and 
$\mathbb{K}_s - \mathcal{M}_r$).

\subsection{A directed regularity lemma}
\label{sec:DRL}
We begin with some routine definitions.  
Fix $\eps > 0$, a graph $G = (V, E)$, and disjoint $\emptyset \neq A, B \subset V$.  
Define the \emph{density of $G$ w.r.t.~$(A, B)$} by 
    $d_G(A, B) = e(G[A, B]) / (|A||B|)$. 
    For $d \geq 0$, 
we say that $G[A, B]$
is {\it $(d, \eps)$-regular} when 
$\big| d_G(X, Y) - d \big| < \eps$ 
  for all $(A', B') \in 2^A \times 2^B$ satisfying $|A'| > \eps |A|$ and $|B'| > \eps |B|$.
  We say that $G[A, B]$ is {\it $\eps$-regular} when it is $(d, \eps)$-regular for some $d \geq 0$.
For $d_0 \geq 0$ fixed, 
we say that 
  $G[A, B]$ is $(\geq\! d_0, \eps)$-\emph{regular} when $G[A, B]$ 
  is $(d, \eps)$-regular for some $d \geq d_0$. 



Szemer\'edi's regularity lemma states that for all 
$\eps > 0$, every graph $G = (V, E)$ admits a partition 
$V = V_0 \, \dot\cup \, V_1 \, \dot\cup \, \dots \, \dot\cup \, 
V_t$ for $t \leq T_0(\eps)$
where $|V_1| = \dots = |V_t| > (1 - \eps) |V| / t$ and 
all but $\eps t^2$ pairs $1 \leq i < j \leq t$ satisfy that
$(V_i, V_j)$ is $\eps$-regular.  We need a version
of this lemma for a directed graph $D$.  To that end, fix disjoint $\emptyset \neq A, B \subset V(D)$.  We 
write $D\langle A, B\rangle$ for the simple bipartite  graph 
with vertex classes $A$ and $B$ and all edges $ab$ of the form 
$(a, b) \in E(D)$.  Note here that the order $(A, B)$
is important as often $D \langle A, B \rangle \neq D \langle B, A \rangle$.  
We then write 
$d_D(A, B) = e(D \langle A, B \rangle ) / (|A| |B|)$ as a {\it density} of $D$ w.r.t.~$(A, B)$.   
We also write $H[A, B] = D \langle A, B \rangle \cap D \langle B, A \rangle$ for the graph of pairs  
$ab \in E(H[A, B])$ satisfying $(a, b), (b, a) \in E(D)$.  


Alon and Shapira~\cite{alon2003testing} proved 
a directed version of 
Szemer\'edi's regularity lemma, whereby for all $\eps > 0$, every digraph $D$  
admits a partition 
$V(D) = V_0 \, \dot\cup \, V_1 \, \dot\cup \, \dots \, \dot\cup \, V_t$ 
for $t \leq T_0(\eps)$
where $|V_1| = \dots = |V_t| \geq (1 - \eps) |V| / t$
and all but $\eps t^2$ pairs $1 \leq i < j \leq t$
satisfy that each of $D \langle V_i, V_j \rangle$, $D\langle V_j, V_i \rangle$, and $H[V_i, V_j]$ is $\eps$-regular.
We need the following so-called `degree form' of~\cite{alon2003testing}, which essentially appears as Lemma~39 
of Taylor~\cite{Taylor}

  
\begin{theorem}
\label{thm:degree_form}
For all $\eps > 0$ and $t_0 \in \mathbb{N}$, there exist $T_0 = T_0(\eps, t_0) \in \mathbb{N}$ and $N_0 = N_0 (\eps, t_0) \in \mathbb{N}$
so that for all $n \geq N_0$ and $d_0 \in [0, 1)$, every $n$-vertex digraph $D$
admits a partition 
$V(D) = V_0 \, \dot\cup \, V_1 \, \dot\cup \, \dots \, \dot\cup \, V_t$ 
satisfying 
$t_0 \leq t \leq T_0$ and $|V_1| = \dots = |V_t| \geq (1 - \eps) n / t$ 
and a spanning subdigraph $D'$ of $D$ satisfying 
\begin{enumerate}
    \item[{\rm (i)}]  
    $d^+_D(v) - d^+_{D'}(v), \, 
    d^-_D(v) - d^-_{D'}(v) 
    \leq 
    (3d_0+\ep)n$  
    for all $v \in V(D)$; 
    \item[{\rm (ii)}]  
    $E(D'[V_1]) = \dots = E(D'[V_t]) = \emptyset$; 
    \item[{\rm (iii)}]  
$E(D'\langle V_i, V_j \rangle) \neq \emptyset$ $\implies$ $D\langle V_i, V_j \rangle$ is $(\ge\!d_0, \eps)$-regular; 
    \item[{\rm (iv)}]  
$E(D' \langle V_i, V_j \rangle) \cap  
E(D'\langle V_j, V_i \rangle) \neq \emptyset$ 
$\implies$ each of $D \langle V_i, V_j \rangle$, $D \langle V_j, V_i \rangle$, and $H[V_i, V_j]$ is $(\ge\!d_0, \eps)$-regular.   
\end{enumerate}
\end{theorem}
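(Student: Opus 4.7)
The plan is to derive this ``degree form'' from the bare Alon--Shapira directed regularity lemma \cite{alon2003testing} via the standard cleanup that converts raw Szemer\'edi regularity into its degree form. I would first apply Alon--Shapira with auxiliary parameters $\varepsilon' \ll \varepsilon$ and $t_0' \geq t_0$, obtaining an equitable partition into $t_0' \leq t' \leq T_0'(\varepsilon', t_0')$ parts with at most $\varepsilon' (t')^2$ exceptional pairs, and the guarantee that each non-exceptional $\{i, j\}$ makes all three of $D\langle V_i, V_j\rangle$, $D\langle V_j, V_i\rangle$, and $H[V_i, V_j]$ be $\varepsilon'$-regular.

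Next, I would absorb into $V_0$ every class that meets too many exceptional pairs (say more than $\sqrt{\varepsilon'}\, t'$ of them), so that each surviving class meets at most $\sqrt{\varepsilon'}\, t'$ exceptional pairs. A standard double-counting bounds the total absorbed vertices by $2\sqrt{\varepsilon'}\, n$, which together with the original $|V_0'| \leq \varepsilon' n$ keeps $|V_0| < \varepsilon n$. This rebalancing shrinks the number of good classes slightly to the final $t \in [t_0, T_0]$ without spoiling equitability.

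I would then define $D'$ by deleting from $D$:
\begin{enumerate}
\item[(A)] arcs inside a single $V_i$, arcs incident to $V_0$, and arcs lying in an exceptional pair;
\item[(B)] for each good pair $\{i, j\}$, every arc of $D\langle V_i, V_j\rangle$ whenever that bipartite graph has density less than $d_0$, and symmetrically for $D\langle V_j, V_i\rangle$;
\item[(C)] for each good pair $\{i, j\}$ with $H[V_i, V_j]$ of density less than $d_0$, every arc of $D$ lying in a 2-cycle between $V_i$ and $V_j$.
\end{enumerate}
Items (ii)--(iv) then fall out by construction: any surviving arc of $D'\langle V_i, V_j\rangle$ comes from a good pair with $D\langle V_i, V_j\rangle$ of density $\geq d_0$, and any surviving 2-cycle additionally has $H[V_i, V_j]$ of density $\geq d_0$. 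For (i), the three deletion buckets contribute at most $\varepsilon n$, $d_0 n$, and $d_0 n$ respectively to the out-degree of each $v \in V_i$ (and symmetrically to the in-degree), for a total drop of at most $(2 d_0 + \varepsilon) n < (3 d_0 + \varepsilon) n$.

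The main obstacle will be the per-vertex control in bucket (A): Alon--Shapira only bounds the number of exceptional pairs globally, so without the bad-class absorption step one could lose all out-arcs from some $v$ to irregular pairs. Once that is handled, everything else is routine calculation in the choice $\varepsilon' \ll \varepsilon$. One small subtlety to verify in bucket (C) is that deleting only the 2-cycle arcs (rather than killing an entire direction) both suffices to make the premise of (iv) fail on that pair and preserves enough arcs to keep (iii) in force --- which is immediate since no single arc is removed unless its reverse also belongs to $D$.
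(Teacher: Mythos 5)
Your overall architecture (apply Alon--Shapira with smaller $\varepsilon'$, absorb classes touching too many irregular pairs into $V_0$, delete arcs in three buckets, and check (ii)--(iv) by construction) is the right one, and your handling of (iii)--(iv), in particular the observation that deleting only the $2$-cycle arcs in bucket (C) is both sufficient and harmless, is correct. The paper itself does not prove this theorem but cites Lemma~39 of Taylor, so there is no in-paper argument to compare against; I am only judging correctness.

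There is, however, a genuine gap in your bound for (i): the claim that buckets (B) and (C) each cost at most $d_0 n$ per vertex is not justified and is in fact false without a further cleanup step. Being $(\ge\!d_0,\varepsilon')$-regular (or $\varepsilon'$-regular with density below $d_0$) controls densities of subpairs of size at least $\varepsilon' m$ --- it says nothing about the out-degree of a single vertex into $V_j$. One can build an $\varepsilon'$-regular bipartite pair of density $d_0/2$ in which a fixed vertex $v \in V_i$ has out-degree equal to $|V_j|$, and nothing prevents $v$ from simultaneously being such a degree outlier across a constant fraction of the low-density pairs incident to $V_i$; then bucket (B) alone strips $\Omega(n)$ out-arcs from $v$, far exceeding $(3d_0 + \varepsilon)n$. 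The standard fix is analogous to your class absorption but operates on vertices rather than classes: for each regular pair, at most $\varepsilon'|V_i|$ vertices of $V_i$ have out-degree into $V_j$ exceeding $(d_{ij}+\varepsilon')|V_j|$, so double-counting lets you move to $V_0$ all vertices that are degree-outliers for more than $\sqrt{\varepsilon'}\,t$ pairs (in any of the three bipartite graphs, for both in- and out-degree), after which you must re-equalize the class sizes and note that trimming a small fraction of each class preserves $(\ge\!d_0,\varepsilon)$-regularity. This outlier removal plus rebalancing is not a ``routine calculation'' --- it is a missing step, and it, not bucket (A), is the genuinely delicate part of deriving the degree form. Your identified ``main obstacle'' in bucket (A) is real but is the easier of the two.
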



\subsection{A rainbow clique embedding lemma}
\label{sec:REL}
The main goal of this section is to prove the following `rainbow clique embedding lemma'.  

\begin{lemma}[Rainbow embedding lemma] 
\label{lem:MDr_to_rainbowKr}
For all $\ell, s \in \mathbb{N}$
and $d_0 \in (0, 1)$, there exist $\eps, \tau \in (0, 1)$ so that whenever 
an edge-colored $V$-vertex graph $(F, c)$ and $[s]$-vertex directed graph $D_F$ 
satisfy 
that 
\begin{enumerate}
\item[{\rm (a)}]  $F$ is $s$-partite with $s$-partition 
$V = V_1 \, \dot\cup \, \dots \, \dot\cup \, V_s$, where 
$|V_1| = \dots = |V_s| = m$ is sufficiently large,   
\item[{\rm (b)}]  $F[V_i, V_j]$ is $(\geq\! d_0, \varepsilon)$-regular for each $1 \leq i < j \leq s$, and    
\item[{\rm (c)}]  
all $(i, j) \in D_F$ and $v_i \in V_i$ satisfy that $E_F(v_i, V_j)$ is rainbow and every  
$\phi \in c\big(E_F(v_i, V_j)\big)$ appears at most $\tau m$ times on $E_F(v_i, V \setminus V_j)$,       
\end{enumerate}
then the following properties hold:  
\begin{enumerate}
\item[{\rm (i)}]
if $D_F = \mathbb{K}_s - \triangle_{\%} - {\mathcal M}_r$,  then $(F, c)$ admits a rainbow $K_s$;     
\item[{\rm (ii)}]
if $D_F = \mathbb{K}_s - {\mathcal M}_r$, then $(F, c)$ admits a rainbow $K_r \vee K_{s - r}^{\ell}$.    
\end{enumerate}  
\end{lemma}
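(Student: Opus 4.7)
The plan is a greedy embedding guided by $\varepsilon$-regularity and the rainbow-plus-secondary-multiplicity content of condition~(c). First I would fix $\varepsilon, \tau$ sufficiently small in $d_0, s, \ell$ and build the target by picking its vertices one at a time from $V_1, \ldots, V_s$: at each step, the new vertex $v \in V_t$ must lie in the intersection of the $F$-neighborhoods of all previously chosen vertices in other parts, a set of size at least $(d_0/2)^{O(s\ell)} m \gg \varepsilon m$ by iterated $(d_0,\varepsilon)$-regularity, and $v$ must give each new edge $vu$ a color distinct from the set $C$ of used colors and from any other new edge $vu'$. For a used color $\gamma$ and earlier $u \in V_j$: if $(j,t)\in D_F$, the rainbow-at-$u$ part of~(c) yields at most one bad $v$, and otherwise the secondary part of~(c), applied at some already-chosen vertex where $\gamma$ provably appears on an ``outgoing'' arc of $D_F$, bounds the bad set by $\tau m$. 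Internal collisions $c(uv) = c(u'v)$ are bounded analogously, either by rainbow-at-$u$ or, in the residual case, by a double-count using the secondary condition at $v$.

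For case~(ii), $\overline{D_F} = \mathcal{M}_r$ is an oriented matching and hence a DAG, so I would linearly order $[s]$ so every arc of $\overline{D_F}$ points backward. Then $(j,t)\in D_F$ for every $j<t$ and the greedy embedding proceeds directly: pick $\ell$ vertices from each of the $s-r$ designated multi-parts and $1$ from each of the $r$ designated singleton parts in this order, with every bad-color event falling into the easy rainbow-at-earlier regime. A union bound over the polynomially-many forbidden events yields the rainbow $K_r \vee K_{s-r}^{\ell}$.

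Case~(i) is harder because $\overline{D_F} = \triangle_{\%} \cup \mathcal{M}_r$ additionally contains a cyclically oriented triangle on three parts $\{i,j,k\}$, which admits no topological ordering. I would place the non-triangle vertices first (ordering the matching backward as in case~(ii)) and the three triangle vertices last, choosing the within-triangle ordering (say $i,k,j$ when $\triangle_{\%} = \{(i,j),(j,k),(k,i)\}$) so that only a single ``bad'' within-triangle pair remains, namely $\{i,j\}$ at the very last step. At that step, the used-color event $c(v_i v_j) = c(v_i v_k)$ is controlled by the secondary condition at $v_i$ with the arc $(i,k)\in D_F$, and the internal collision $c(v_i v_j) = c(v_k v_j)$ is controlled by pre-selecting $v_k$ at the previous step to be generic via the double count $\sum_{v'\in V_k}|\{v_j : c(v_i v_j) = c(v' v_j)\}| \le \tau m^2$, itself obtained from the secondary condition at $v_j$ on the arc $(j,i)\in D_F$. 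The main obstacle is exactly this cyclic triangle: its arcs cannot all be made ``forward,'' breaking the simplest greedy argument on one pair, and the fix requires combining both halves of~(c) with a generic pre-choice of the middle triangle vertex.
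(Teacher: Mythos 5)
Your proposal takes a genuinely different route from the paper. The paper's proof is probabilistic: it selects a uniformly random tuple of vertices (one per part in part~(i), $\lambda_i$ per part in part~(ii)), invokes the Graph Counting Lemma to show a positive fraction of such tuples span the required clique, and then union-bounds the expected number of monochromatic pairs of target edges, bounding each term $\mathbb{P}[c(e)=c(e')]$ by conditioning on all vertices of $e\cup e'$ except one and using~(c) on the remaining random vertex. Crucially, the paper is free to choose \emph{which} vertex is ``last'' differently for each pair $(e,e')$, which is what lets the asymmetric hypothesis~(c) (rainbow at $v_i$ over $V_j$ only when $(i,j)\in D_F$) cover every case. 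Your greedy embedding commits to a fixed placement order, so the ``random'' vertex in each collision event is forced to be the one placed last, and that flexibility is lost.

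This creates a concrete gap in your argument. Consider the ``internal collision'' $c(zw_q)=c(zw_{q'})$ where $z\in V_t$ is the new vertex and $w_q\in V_j$, $w_{q'}\in V_{j'}$ are already placed with $j,j'<t$. You assert these are bounded ``by rainbow-at-$u$'' — but rainbow at $w_q$ over $V_t$ only says $z\mapsto c(zw_q)$ is injective, and likewise for $w_{q'}$; nothing in~(c) prevents the two injective maps $V_t\to\mathcal{C}$ from agreeing \emph{pointwise}, so $|\{z\in V_t: c(zw_q)=c(zw_{q'})\}|$ can be as large as $m$. This means the claim in your part~(ii) treatment that ``every bad-color event falls into the easy rainbow-at-earlier regime'' is false: the internal collisions there require exactly the double-count / generic-pre-choice device you only develop for the cyclic triangle in part~(i). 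To repair the greedy, you would need to pick the \emph{later} of $w_q,w_{q'}$ generically, using a double count $\sum_{z}\lvert\{w': c(zw_q)=c(zw')\}\rvert\le\tau m^2$ via the secondary condition at $z$ over whichever of $V_j,V_{j'}$ has $(t,j)$ or $(t,j')\in D_F$ (at most one can fail, since $\overline{D_F}$ is an oriented matching, or a matching plus a cyclic triangle). A similar genericity is also needed for your part~(i) ``used-color'' collisions $c(v_iv_j)=\gamma$ when $\gamma$ is a color of an already-placed edge \emph{not} incident to $v_i$, which your sketch does not address. The paper's probabilistic formulation sidesteps all of this by never committing to any vertex before the union bound is evaluated.
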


\noindent  It is easy to jointly apply 
Theorem~\ref{thm:degree_form}   
and Lemma~\ref{lem:MDr_to_rainbowKr}.  For future reference, we sketch how this is done.

\begin{remark}
\label{rem:embedding}
\rm 
Fix $\ell, s \in \mathbb{N}$ and $d_0 \in (0, 1)$ and let $\eps, \tau \in (0, 1)$ be given by Lemma~\ref{lem:MDr_to_rainbowKr}.  
Let $(G, c)$ be a large $n$-vertex graph, and     
construct a $(G, c, \sqrt{n})$-digraph $D$ thereof.
For $d_0, \eps$ above and $t_0 = 1$, 
let $V(D) = V_0 \, \dot\cup \, V_1 \, \dot\cup \, \dots 
\, \dot\cup \, V_t$ 
and $D' \subseteq D$ be those objects guaranteed by Theorem~\ref{thm:degree_form}.  
Assume that 
\begin{equation}
\label{eqn:embedding}
\text{{\it $D' - V_0$ contains a copy 
$\hat{K}$
of 
$\mathbb{K}_s - \triangle_{\%} - {\mathcal M}_r$
or 
of 
$\mathbb{K}_s - {\mathcal M}_r$.}}
\end{equation}
Every $u \neq v \in V(\hat{K})$ 
spans an arc of $\hat{K}$ while $E(D'[V_1]) = \dots = E(D'[V_t]) = \emptyset$, so 
$V(\hat{K})$ traverses $V_1 \, \dot\cup \, \dots \, \dot\cup \, V_t$.  Let, w.l.o.g., 
$V(\hat{K})$ be ordered as $(v_1, \dots, v_s) \in V_1 \times \dots \times V_s$.  
We now define an $s$-partite graph $F$ with $s$-partition $V = V_1 \, \dot\cup \, \dots \, \dot\cup \, V_s$ and corresponding 
$[s]$-vertex directed graph
$D_F$ as follows.  
First, 
for each $1 \leq i \neq j \leq s$, put $(i, j) \in E(D_F)$ if, and only if, $(v_i, v_j) \in E(\hat{K})$, noting that 
at least one of $(i, j)$ and $(j, i)$ must belong to $E(D_F)$.  
Second, define 
$$
F[V_i, V_j]
= 
\left\{
\begin{array}{cc}
D \langle V_i, V_j\rangle & \text{if $(i, j) \in E(D_F)$ but $(j, i) \not\in E(D_F)$,} \\
H[V_i, V_j] & \text{if both $(i, j), (j, i) \in E(D_F)$.}
\end{array}
\right.
$$
Theorem~\ref{thm:degree_form} guarantees that $F[V_i, V_j]$ is $(\ge\! d_0, \eps)$-regular
and that 
$m = |V_1| = \dots = |V_t|\geq (1 - \eps) n / t \geq n / (2T_0) = \Omega(n)$ is large, where $T_0 = T_0(\eps, t_0 = 1)$ is the constant
guaranteed by 
Theorem~\ref{thm:degree_form}.
The construction of $D_F$ 
guarantees that $E_F(v, V_j)$ is rainbow for each $v \in V_i$ and that, moreover, 
every $\phi \in c(E_F(v, V_j))$ appears at most $\sqrt{n} \ll \tau n$ times in $E_F(v, V \setminus V_j)$.  
Now, depending on which of~\eqref{eqn:embedding}
transpired, 
Lemma~\ref{lem:MDr_to_rainbowKr} guarantees that $(F, c|_F)$ and hence $(G, c)$ admits a rainbow $K_s$ or a rainbow
$K_r \vee K_{s - r}^{\ell}$.  \hfill $\Box$
\end{remark}


\subsection{Proof of Lemma~\ref{lem:MDr_to_rainbowKr}}
Fix $\ell, s \in \mathbb{N}$ and $d_0 \in (0, 1)$.  
We take $\eps, \tau > 0$ sufficiently small 
and $m \in \mathbb{N}$
sufficiently large 
whenever needed.  
Let
$(F, c)$ and $D_F$ satisfy 
Hypotheses~(a)--(c) of Lemma~\ref{lem:MDr_to_rainbowKr}. 

\subsection*{Conclusion~(i)}
Uniformly at random select
$(v_1, \dots, v_s) \in V_1 \times \dots \times V_s$.  
We prove
\begin{equation}
\label{eqn:RCLi}
\sum 
\Big\{ 
{\mathbb P} \big[ c(\{v_i, v_j\} = c(\{v_{i'}, v_{j'}\}) \big] : \, 
\{i, j\} \neq \{i', j'\} \in \tbinom{[s]}{2}
\Big\}
\leq  
\tbinom{s}{2}^2 \big(\tau + (1/m) \big) \leq 2\tau s^4
\end{equation}
is small.  
(Throughout this section, we use that ${\mathbb P}\big[c(\{v_i, v_j\}) = c(\{v_{i'}, v_{j'}\})\big] = 0$ when $\{v_i, v_j\} \not\in E(F)$ or $\{v_{i'}, v_{j'}\} \not\in E(F)$
since then 
$c(\{v_i, v_j\}) = c(\{v_{i'},v_{j'}\})$
is the empty event.)
The well-known 
Graph Counting Lemma 
says that (with $o_{\eps}(1) \to 0$ as $\eps \to 0$)     
\begin{equation}
\label{eqn:GCLi}
\mathbb{P} \Big[ F\big[\{v_1, \dots, v_s\}\big] = K_s\Big] = 
\big(1 \pm o_{\eps}(1) \big)
\prod_{1 \leq i < j \leq s} d_F(V_i, V_j) 
\ge 
\big(1 - o_{\eps}(1) \big)
d_0^{\binom{s}{2}} 
\end{equation}
is big.
Then~\eqref{eqn:RCLi} 
and~\eqref{eqn:GCLi}
guarantee 
that most $K_s$ in $F$ are rainbow.  
To prove~\eqref{eqn:RCLi},   
fix $\{i, j\} \neq \{i', j'\} \in \binom{[s]}{2}$.     
W.l.o.g., 
$(i, j), (i', j') \in E(D_F) =  
E\big( \mathbb{K}_s - \triangle_{\%} - {\mathcal M}_r\big)$.   
\bigskip 

\noindent {\sc Case 1 ($\{i , j\} \cap \{i', j' \} = \emptyset$).}
Since $E_F(v_i, V_j)$ is rainbow, 
${\mathbb P} \big[\, c (\{v_i, v_j\}) = c(\{v_{i'}, v_{j'}\}) \, \big] 
\leq 
1 / m$.  
\bigskip

\noindent {\sc Case 2 ($j = i'$, $j' = k$).}  
Since $c(\{v_j, v_k\})$ 
sees $E_G(v_j, V_i)$ 
at most 
$\tau m$ times, 
${\mathbb P} \big[\, c (\{v_i, v_j\}) = c(\{v_j, v_k\}) \, \big] 
\leq \tau.$

\subsection*{Conclusion~(ii)}
W.l.o.g., let 
\begin{equation}
\label{eqn:4.17.2024.6:11p}
E(\overline{D}_F) = 
{\mathcal M}_r = 
\big\{ (2, 1), (4, 3), \dots, (2r,\, 2r-1) \big\}.  \end{equation}
For each $1 \leq i \leq s$, uniformly at random select $U_i \in \binom{V_i}{\lambda_i}$ where 
$$
\lambda_i = 
\left\{
\begin{array}{cc}
1 & \text{if $i \in \{1, 3, \dots, 2r - 1\}$,} \\
\ell & \text{else.}
\end{array}  
\right.
$$
Let $U = U_1 \, \dot\cup \, \dots \, \dot\cup \, U_s$
and 
$\binom{U}{2}_{\times} = 
\binom{U}{2} \setminus \big(\binom{V_1}{2} \, \dot\cup \, \dots \, \dot\cup \, \binom{V_s}{2} \big)$.      
We prove
that 
\begin{equation}
    \label{eqn:RCLii}
\sum 
\Big\{
\mathbb{P} \big[
c(\{u_i, u_j\}) = c(\{u_{i'}, u_{j'}\}) \big]: \, 
\{u_i, u_j\} \neq \{u_{i'}, u_{j'}\} \in \tbinom{U}{2}_{\times} 
\Big\}  
\leq 
\Big(\tbinom{s}{2} \ell^2\Big)^2\big(\tau \ell + (2 \ell / m)  \big) \leq 2 \tau s^4 \ell^5  
\end{equation}  
is small. 
The Graph Counting Lemma says 
that 
$$
{\mathbb P}  
\big[F[U] 
= K_r \vee K_{s-r}^{\ell}\big]
= \big(1 \pm  o_{\eps}(1) \big) 
\prod_{1 \leq i < j \leq s}
\big(d_F(V_i, V_j)\big)^{\lambda_i \lambda_j}
\geq
\big(1 - o_{\eps}(1)\big)
d_0^{\binom{r}{2} + r (s - r) \ell +  \binom{s-r}{2}\ell^2 }
$$
is big, proving  
Conclusion~(ii).   
To prove~\eqref{eqn:RCLii}, 
fix $\{u_i, u_j\} \neq \{v_{i'}, v_{j'} \} \in \binom{U}{2}_{\times}$ with 
$(u_i, u_j, v_{i'}, v_{j'}) \in U_i \times U_j \times U_{i'} \times U_{j'}$  
for $i < j$ and $i' < j'$.  
Then $(i, j), (i', j') \in E(D_F)$ 
from~\eqref{eqn:4.17.2024.6:11p}.  
\bigskip 

\noindent {\sc Case 1 ($\{i, j\} \cap \{i', j'\} = \emptyset$).}  
Since $E_F(u_i, V_j)$ is rainbow, 
$$
{\mathbb P} \big[\, c (\{u_i, u_j\}) = c(\{u_{i'}, u_{j'}\}) \, \big] 
\leq 
\tbinom{m-1}{\lambda_j - 1} \tbinom{m}{\lambda_j}^{-1} = \tfrac{\lambda_j}{m}
\leq \tfrac{\ell}{m}.  
$$
\smallskip 

\noindent {\sc Case 2 ($j = i'$, $j' = k$).}
The desired probability is at most 
$$
{\mathbb P} \Big[\, c(\{u_i, u_j\}) = c(\{v_j, v_k\}) \, \Big|  \,  
u_j \neq v_j \Big]
+ 
{\mathbb P} \Big[\, c(\{u_i, u_j\}) = c(\{v_j, v_k\}) \, \Big|  \,  
u_j = v_j \Big].     
$$
The first probability is,
by $E_F(u_i, V_j)$ being rainbow,   
at most 
$\tbinom{m-2}{\lambda_j - 2} \tbinom{m-1}{\lambda_j -1}^{-1} = 
\tfrac{\lambda_j - 1}{m-1} \leq \tfrac{2\lambda_j}{m}\leq \tfrac{2\ell}{m}$.  
The second probability is, by  
$c(\{v_j, v_k\})$ appearing at most $\tau m$ times on $E_F(v_j, V_i)$,   
at most 
$\tau m \tbinom{m-1}{\lambda_i - 1} \tbinom{m}{\lambda_i}^{-1} = \tau \lambda_i 
\leq \tau \ell$.    
\smallskip 

\noindent {\sc Case 3 ($i = i'$, $j = j'$).}
The desired probability is at most 
$$
{\mathbb P} \Big[\, c(\{u_i, u_j\}) = c(\{v_i, v_j\} ) \, \Big|  \,  
A \, \Big]  
+ 
{\mathbb P} \Big[\, c(\{u_i, u_j\}) = c(\{v_i, v_j\}) \, \Big|  \, B \, \Big] 
+ 
{\mathbb P} \Big[\, c(\{u_i, u_j\}) = c(\{v_i, v_j\}) \, \Big|  \,  
C \, \Big]
$$
where $A$, $B$, and $C$ are the respective events that $u_i \neq v_i$ and $u_j \neq v_j$, that $u_i = v_i$, and that $u_j = v_j$.  
The second probability is zero because 
$E_F(u_i, V_j)$ is rainbow and     
the third probability is zero
because 
$E_F (u_j, V_i)$
is rainbow from 
$u_i \neq v_i \in U_i$
giving $i \not\in \{1, 3, \dots, 2r - 1\}$ and $(j, i) \in E(D_F)$.
The first probability is, by 
$E_F(u_i, V_j)$ being rainbow,   
at most $\tbinom{m-2}{\lambda_j - 2} \tbinom{m-1}{\lambda_j -1}^{-1} = 
\tfrac{\lambda_j - 1}{m-1} \leq \tfrac{2\lambda_j}{m}\leq \tfrac{2\ell}{m}$.

\section{Proof of Theorem~\ref{thm:rainbowKr} }
Fix $\gamma > 0$ and $\ell, r, s \in \mathbb{Z}$ with $\ell \geq 1$, $r \geq 0$, and $s 
\geq \max \{1 + 2r, 2\}$.  Fix also $d_0 \in (0, 1)$ and let $\eps > 0$ be sufficiently 
small whenever needed.  
Let $(G, c)$ be a sufficiently large $n$-vertex edge-colored graph with 
vertex set 
$V = V(G)$.  
Construct
a $(G, c, \sqrt{n})$-digraph $D$ of $(G, c)$, and for $d_0, \eps > 0$ and $t_0 = 1$, 
let $V = V_0 \, \dot\cup \, V_1 \, \dot\cup \, \dots \, \dot\cup \, V_t$ and $D' \subseteq D$
be those objects guaranteed by 
Theorem~\ref{thm:degree_form}.  The standard multigraph $M' = M(D' - V_0)$ of $D' - V_0$ satisfies 
\begin{multline}  
\label{eqn:4.22.2024.12:36p}
e(M') = e(D' - V_0) 
\geq 
\sum_{v \in V \setminus V_0}
\big(d_D^+(v) - |V_0|\big) 
\geq 
- \eps n^2 + 
\sum_{v \in V \setminus V_0}
d_D^+(v)  \\
\stackrel{\text{\tiny Obs.\ref{obs:GtoDoutdeg}}}{\ge}
- \eps n^2 + 
\sum_{v \in V \setminus V_0}
\big(
d_G^c(v) - \big\lfloor \tfrac{d_G(v)}{1 + \sqrt{n}} \big\rfloor \big)
\geq 
- 2 \eps n^2 + 
\sum_{v \in V \setminus V_0}
d_G^c(v) 
\geq 
- 3 \eps n^2 + 
\sum_{v \in V}
d_G^c(v),  
\end{multline}  
which is $n \cdot \mathbb{E}\big[d_G^c(v)\big] - 3\eps n^2$.  
Respectively, 
$$
e(M') 
\stackrel{\eqref{eqn:4.22.2024.12:36p}}{\geq}
n \cdot \mathbb{E}\big[d_G^c(v)\big] - 3\eps n^2  
\quad \text{is} \quad 
\left\{
\begin{array}{lll}
\stackrel{\text{{\rm (i)}}}{\geq}  
\big(1 - \tfrac{1}{s-1} + \gamma - 3\eps \big) n^2 
& > &
\big(1 - \tfrac{1}{s-1}  \big) n^2 
\smallskip 
\\ 
\stackrel{\text{{\rm (ii)}}}{\geq}  
\big(1 - \tfrac{1}{2(s-1-r)} + \gamma - 3\eps \big) n^2 
& > &
\big(1 - \tfrac{1}{2(s-1 -r)}  \big) n^2  
\end{array}
\right.
$$
so 
Theorem~\ref{lem:multigraphturan}
guarantees that
$M'$ and hence $D'$ each contain ${\mathbb K}_s - {\mathcal M}_q$ for, respectively, $0 \leq q \leq s / 2$ and $0 \leq q \leq r$.    
Lemma~\ref{lem:MDr_to_rainbowKr}
(cf.~Remark~\ref{rem:embedding}) now guarantees that $(G, c)$
admits a rainbow $K_q \vee K^{\ell}_{s - q}$
for these same $q$.  Since
$K_q \vee K^{\ell}_{s - q}$
contains $K_{q+1} \vee K^{\ell}_{s - (q + 1)}$, 
Theorem~\ref{thm:rainbowKr} 
holds.


\section{Proof of Theorem~\ref{thm:rainbowKrExact}}
Theorem~\ref{thm:rainbowKrExact} follows quickly from the stability method.  
For that, and 
for $\beta > 0$, 
we say that 
an $n$-vertex graph $G = (V, E)$ is {\it $(K_s, \beta)$-extremal} when 
some 
$V_1, \dots, V_{s-1} \subset V$ satisfy
$|V_1|, \dots, |V_{s-1}| \geq \tfrac{n}{s-1} - \beta n$
and 
$e_G(V_1), \dots, e_G(V_{s-1}) < \beta n^2$.  
We will need the following lemmas.  




\begin{restatable}[Non-extremal Lemma]{lemma}{nonextremalKr}
\label{lem:nonextremal_rainbow_Kr}
Fix $s \in \mathbb{N}$, $\beta > 0$, and a sufficiently small $\alpha > 0$.  A sufficiently large $n$-vertex
edge-colored graph $(F, c_F)$ 
is $(K_s, \beta)$-extremal whenever  
\begin{enumerate}
\item[{\rm (i)}] $(F, c_F)$ is edge-minimal, 
\item[{\rm (ii)}]  $\delta^c(F) \ge \big(1 - \tfrac{1}{s-1} - \alpha\big)n$,  
\item[{\rm (iii)}] and $(F, c_F)$ admits no rainbow $K_s$.    
\end{enumerate}
\end{restatable}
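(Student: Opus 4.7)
\medskip

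\noindent\textbf{Proof proposal.} The plan is to argue the contrapositive: assume $(F, c_F)$ satisfies (i) and (ii) but is \emph{not} $(K_s, \beta)$-extremal, and produce a rainbow $K_s$, contradicting~(iii). The engine combines the digraph regularity lemma (Theorem~\ref{thm:degree_form}) with the rainbow embedding lemma (Lemma~\ref{lem:MDr_to_rainbowKr}, via Remark~\ref{rem:embedding}) and a stability strengthening of Theorem~\ref{lem:multigraphturan} that takes the place the Erd\H{o}s--Simonovits stability theorem plays in ordinary Tur\'an-type arguments.

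First, I would form a $(F, c_F, \sqrt{n})$-digraph $D$ and apply Theorem~\ref{thm:degree_form} with parameters $\alpha \ll d_0 \ll \eps \ll \beta$, yielding a partition $V = V_0 \, \dot\cup \, V_1 \, \dot\cup \, \dots \, \dot\cup \, V_t$ of $V(F)$ and a spanning sub-digraph $D' \subseteq D$. Encode the regular pairs as a standard multigraph $M$ on $[t]$: let $\{i, j\}$ have multiplicity $2$ precisely when both $D'\langle V_i, V_j\rangle$ and $D'\langle V_j, V_i\rangle$ are $(\geq\! d_0, \eps)$-regular (equivalently, when $H[V_i, V_j]$ is a dense regular pair), multiplicity $1$ when exactly one direction is nonempty, and $0$ otherwise. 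The edge-minimality in~(i), the color-degree bound in~(ii), and Observations~\ref{obs:GtoDoutdeg}--\ref{obs:H_edges} combine (exactly as in the displayed computation~\eqref{eqn:4.22.2024.12:36p} for Theorem~\ref{thm:rainbowKr}) to yield $\delta(M) \geq 2(1 - \tfrac{1}{s-1} - O(\alpha))t$. By Remark~\ref{rem:embedding} and~(iii), $M$ contains no copy of $\mathbb{K}_s - \triangle_{\%} - \mathcal{M}_r$ nor of $\mathbb{K}_s - \mathcal{M}_r$, since either would lift to a rainbow $K_s$ in~$F$. Invoking the stability strengthening of Theorem~\ref{lem:multigraphturan} described below, I obtain a partition $[t] = T_1 \, \dot\cup \, \dots \, \dot\cup \, T_{s-1}$ with $|T_i| \geq \tfrac{t}{s-1} - O(\sqrt{\alpha})\, t$ and $\sum_{u,v \in T_i} \mu_M(\{u,v\}) \leq O(\sqrt{\alpha})\, t^2$. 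Lifting via $W_i = \bigcup_{j \in T_i} V_j$, the lower bound $|W_i| \geq \tfrac{n}{s-1} - \beta n$ comes from $|V_0| \leq \eps n$ and the cluster-size uniformity, while $e_F(W_i) < \beta n^2$ follows because edges of $F$ inside $W_i$ are either within a single cluster (at most $t\binom{m}{2} \leq \tfrac{n^2}{2t}$), touch $V_0$ (at most $\eps n^2$), sit in an irregular pair (at most $\eps n^2$), or lie in a pair captured by a multi-edge of $M[T_i]$ (at most $O(\sqrt{\alpha})\, n^2$). Each of these is $\ll \beta n^2$ for the chosen parameters, so $F$ is $(K_s, \beta)$-extremal, contradicting the assumption.

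The main obstacle is the stability version of Theorem~\ref{lem:multigraphturan}: I need that every standard multigraph $M$ on $t$ vertices with $\delta(M) \geq 2(1 - \tfrac{1}{s-1} - O(\alpha))\, t$ that forbids both $\mathbb{K}_s - \triangle_{\%} - \mathcal{M}_r$ and $\mathbb{K}_s - \mathcal{M}_r$ must be structurally close to the extremal configuration, namely the complete balanced $(s-1)$-partite multigraph with every edge heavy. The natural approach is induction on $s$, mirroring the Erd\H{o}s--Simonovits removal strategy: if a candidate partition already isolates the heavy edges between parts, the conclusion is immediate; otherwise some vertex has too many heavy neighbors (detected via Observation~\ref{obs:deg}) and one can extend a heavy $\mathbb{K}_{s-1} - \mathcal{M}_{r-1}$ (obtained by induction) through a single fresh vertex to a forbidden $\mathbb{K}_s - \mathcal{M}_r$, or a triple spanning enough light and heavy edges completes a $\mathbb{K}_s - \triangle_{\%} - \mathcal{M}_r$. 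The subtlety is that the $\triangle_{\%}$-configuration is genuinely stronger than $\mathcal{M}_r$-freeness alone, and it is precisely this stronger forbidden structure that rules out improper colorings between the parts and ultimately forces the \emph{proper} coloring conclusion of Theorem~\ref{thm:rainbowKrExact} after the extremal lemma is applied.
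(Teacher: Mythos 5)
Your overall scaffolding is the right one and matches the paper's: regularize the $(F, c_F, \sqrt n)$-digraph $D$ via Theorem~\ref{thm:degree_form}, use Lemma~\ref{lem:MDr_to_rainbowKr}/Remark~\ref{rem:embedding} to conclude that the regularized digraph is free of $\mathbb{K}_s - \triangle_{\%} - \mathcal{M}_r$ and $\mathbb{K}_s - \mathcal{M}_r$, and then apply a stability theorem to extract the $(K_s, \beta)$-extremal structure. A cosmetic difference is that the paper applies the stability argument (Proposition~\ref{prop:triangleKr}, reduced to Proposition~\ref{prop:nonext}) directly to the digraph $D_0 = D' - V_0$ on $m \approx n$ vertices, rather than first passing to a reduced multigraph on $[t]$ and lifting; both viewpoints could in principle work.

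The genuine gap is the stability theorem you posit. You claim that an $\MK_s$-free standard multigraph $M$ with $\delta(M) \ge 2\bigl(1 - \tfrac{1}{s-1} - O(\alpha)\bigr)|V(M)|$ must admit a near-balanced $(s-1)$-partition in which each part has $O(\sqrt\alpha)|V(M)|^2$ internal edge weight. This is \emph{false}. Consider $M$ consisting of a complete $(s-2)$-partite multigraph with one class $V_1$ of size $\approx 2n/(s-1)$ and the remaining $s-3$ classes of size $\approx n/(s-1)$, with all cross-edges heavy, and with $M[V_1]$ a complete simple (all-light) graph. Every vertex has degree $\approx 2\bigl(1 - \tfrac{1}{s-1}\bigr)n$, and $M$ is $\MK_s$-free (any $s$-set must reuse a class, and the resulting complement $\overline{M}[S]$ contains either a heavy edge or a light triangle, never a light matching), yet $V_1$ carries $\Theta(n^2)$ internal edges however it is split. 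This is precisely why Proposition~\ref{prop:nonext} in the paper gives the weaker alternative conclusion: each class $V_i$ satisfies either $(P_1)$ (independent, size $\approx n/(s-1)$) or $(P_2)$ ($\MK_3$-free, size $\approx 2n/(s-1)$). The $(P_2)$-classes are the obstruction you are missing, and they cannot be removed at the multigraph level.

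Relatedly, you misplace the role of the $\triangle_{\%}$-forbidding. You write that it is "precisely this stronger forbidden structure that rules out improper colorings \ldots\ after the extremal lemma is applied," but in fact the $\triangle_{\%}$-freeness is needed \emph{inside the proof of this very lemma}: a $(P_2)$-class corresponds, at the directed-graph level, to a tournament-like piece of $D_0$, and the paper's Fact~\ref{fact:cyclic_triangle} shows that such a piece with near-half out-degree is either $(K_3, \cdot)$-extremal (hence splittable into two near-independent halves) or contains a cyclically directed triangle, which together with a $\mathbb{K}_{s-3}-\mathcal{M}_r$ in the common heavy neighborhood produces a forbidden $\mathbb{K}_s - \triangle_{\%} - \mathcal{M}_r$. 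Your encoding into a standard multigraph $M$ on $[t]$ forgets edge orientations, so that information is no longer accessible; you would need to retain a reduced \emph{digraph} (or, as the paper does, work directly with $D_0$) to run this step. As written, the proposal cannot close the gap without this extra directed-stability input.
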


\begin{restatable}[Extremal Lemma]{lemma}{extremalKr}
\label{lem:extremal_rainbow_Kr}
Fix $s \in \mathbb{N}$ and a sufficiently small $\beta > 0$.
If 
a sufficiently large $n$-vertex 
edge-colored graph $(F, c_F)$ satisfies 
that 
\begin{enumerate}
    \item[{\rm (I)}]
    $F$ is $(K_s, \beta)$-extremal, 
    \item[{\rm (II)}]  
    $\delta^{c_F}(F) \geq \big(1 - \tfrac{1}{s-1}\big)n$, 
    \item[{\rm (III)}]
    and $(F, c_F)$ admits no rainbow $K_s$, 
\end{enumerate}
then $F = K^L_{s-1}$ for $L = n/(s-1) \in \mathbb{N}$ and $c_F$ properly colors $E(F)$.  
\end{restatable}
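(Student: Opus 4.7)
The plan is to leverage the extremal partition guaranteed by (I) to pin down $F$ exactly. As a preliminary, I would pass to an edge-minimal color-degree-preserving subgraph $F' \subseteq F$ (per the opening remarks of Section~2) and absorb $V_0$ by assigning each $v \in V_0$ to some $V_j$ minimizing $d_{F'}(v, V_j)$. Since $|V_0| \le (s-1)\beta n$, the refined partition still satisfies $|V_i| = n/(s-1) \pm O(\beta) n$ and $e_{F'}(V_i) = O(\beta) n^2$ for every $i \in [s-1]$.

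The core step is to show that $F'$ has no intra-class edges. Assume for contradiction that $xy \in E(F')$ with $x, y$ in some $V_i$. Averaging gives that all but $O(\sqrt{\beta}) n$ vertices in $V_i$ are \emph{internally sparse} (have fewer than $\sqrt{\beta} n$ neighbors in $V_i$), and combining the tight bound $\delta^{c_{F'}}(F') \ge (s-2)n/(s-1)$ with $|V \setminus V_i| \approx (s-2)n/(s-1)$ forces every internally sparse vertex to be adjacent in $F'$ to all but $O(\sqrt{\beta}) n$ vertices of $V \setminus V_i$; one checks using $e_{F'}(V_i) = O(\beta) n^2$ that we may pick $xy$ with both $x, y$ internally sparse. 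I would then build a rainbow $K_s$ by greedily selecting $v_j \in V_j$ for $j \ne i$ so that each $v_j$ is adjacent in $F'$ to $A = \{x, y\} \cup \{v_{j_\ell}\}_\ell$ and the $|A|$ new edge-colors are distinct from each other and from the $\binom{|A|}{2}$ previously used colors. The adjacency constraint leaves at least $L/2$ candidates at each step, and the color-avoidance step relies on the color-excess bound $d_{F'}(a) - d^{c_{F'}}(a) \le n/(s-1)$ together with the absence of monochromatic paths on three edges in $F'$, which jointly limit how many vertices of $V_{j_k}$ can repeat a used color at any $a \in A$. A rainbow $K_s$ produced in this way contradicts (III).

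Once $F'$ is $(s-1)$-partite, the degree computation is clean: $d_{F'}(v) \le n - |V_i|$ and $d_{F'}(v) \ge d^{c_{F'}}(v) \ge (s-2)n/(s-1)$ force $|V_i| \le n/(s-1)$, so summing to $n$ gives $|V_i| = L = n/(s-1)$ for every $i$ and $F' = K^L_{s-1}$; the equality $d_{F'}(v) = d^{c_{F'}}(v) = (s-2)L$ then makes $c_{F'}$ proper. To upgrade to $F = F'$, any edge $vw \in F \setminus F'$ is necessarily intra-class (say in $V_i$), and $c_F(vw)$ coincides with $c_F(vu) = c_F(wu')$ for some $u, u' \notin V_i$; choosing $v_j \in V_j$ for $j \ne i$ while avoiding $\{u, u'\}$ and the $O(s^2)$ other possible color clashes (possible since $L$ is large compared to $s$) yields a rainbow $K_s$ in $F$, contradicting (III). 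I expect the greedy color-avoidance inside the intra-class-edge step to be the main obstacle, since bounding the density of candidates that simultaneously realize a fresh color at every $a \in A$ requires carefully combining the sparsity of intra-class edges, the bound on color multiplicities, and the $(s-1)$-partite approximation; every other step reduces to bookkeeping.
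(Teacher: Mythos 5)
Your strategy diverges from the paper's, and the key color-avoidance step has a genuine gap. The paper never runs a direct greedy on $(F, c_F)$: it builds the $(F, c_F, n-1)$-digraph $D$ (so $d^+_D(v) = d^{c_F}(v)$ exactly by Observation~\ref{obs:GtoDoutdeg}) and works inside the $2$-cycle graph $H = H(D)$, whose edge $uv$ encodes that $c_F(uv)$ is the \emph{unique} edge of its color at both $u$ and $v$. Since $c_F|_{E(H)}$ is a proper coloring (Observation~\ref{obs:prop_coloring}), the color bookkeeping disappears; the remaining work is an in-/out-degree argument showing the set $W$ of vertices of small $d^-_D$ is tiny, that $d_H(v,U)$ for $v\in U=V\setminus W$ is at least roughly $\big(1-\tfrac{2}{s-1}\big)n$, and that a refined partition $U_1,\dots,U_{s-1}$ of $U$ has $e(G[U_i])=0$, $U_0=\emptyset$, and $W=\emptyset$; Tur\'an's theorem then finishes.

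The gap in your sketch: you assert that the color-excess bound $d_{F'}(a) - d^{c_{F'}}(a) \le n/(s-1)$ together with edge-minimality limits how many $v_j\in V_j$ repeat a used color at some $a\in A$, but neither does. Edge-minimality only forces $c_{F'}(uv)$ to be unique at \emph{one} endpoint of each edge; it permits $\phi=c_{F'}(aa')$ to form a monochromatic star at $a$ with $\approx n/(s-1)$ leaves (this exhausts the excess at $a$ and is unique at each leaf), which could cover essentially all of $V_j$ and leave no admissible candidate. The requirement $c(v_ja)\ne c(v_ja')$ is even less controlled: if $c(v_ja)=c(v_ja')=\psi$, edge-minimality forces $\psi$ to be unique at $a$ and at $a'$ (the repetition lives at $v_j$), so these edges contribute to $d^{c_{F'}}(a)$, not to the excess, and up to $|V_j|$ distinct colors $\psi$ can each supply one bad $v_j$, with no a priori bound. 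The paper's fix is exactly to restrict to $v_j$ with $v_ja, v_ja'\in E(H)$: uniqueness of $c(v_ja)$ at $a$ rules out the big star, and uniqueness at $v_j$ rules out the cherry $\psi$. Your greedy contains neither this restriction nor any substitute, so as written the central step does not close.
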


\noindent  We defer the proofs of 
Lemmas~\ref{lem:nonextremal_rainbow_Kr} 
and~\ref{lem:extremal_rainbow_Kr} 
to subsequent sections.  


\subsection*{Proof of Theorem~\ref{thm:rainbowKrExact}}  
Fix an integer $s \geq 2$.
We fix $\beta > 0$ small for 
Lemma~\ref{lem:extremal_rainbow_Kr}, and then fix $\alpha > 0$
small for 
Lemma~\ref{lem:nonextremal_rainbow_Kr}.  
Let $(G, c)$ be a large $n$-vertex edge-colored graph 
satisfying the hypotheses of Theorem~\ref{thm:rainbowKrExact}, i.e., that   
$\delta^c(G) \geq \big(1 - \tfrac{1}{s-1}\big)n$
and $(G, c)$ has no rainbow $K_s$.  
We apply Lemma~\ref{lem:nonextremal_rainbow_Kr}  
to a fixed and guaranteed spanning subgraph $F \subseteq G$  
where $\big(F, c_F\big)$
is edge-minimal and color-degree preserving.   
Here, $(F, c_F)$ meets 
the hypotheses of 
Lemma~\ref{lem:nonextremal_rainbow_Kr}  
because~(i) 
holds by construction, 
(ii) 
holds by 
\begin{equation}
\label{eqn:5.2.2024.11:15a}
\delta^{c_F}(F) = \delta^c(G) \geq 
\big(1 - \tfrac{1}{s-1}\big)n
> 
\big(1 - \tfrac{1}{s-1} - \alpha \big)n,   
\end{equation}
and~(iii) is inherited from $(G, c)$.  
Lemma~\ref{lem:nonextremal_rainbow_Kr}  
guarantees that $(F, c_F)$ is $(K_s, \beta)$-extremal, so 
we next apply 
Lemma~\ref{lem:extremal_rainbow_Kr}
to it.  Here, $(F, c_F)$ 
meets 
the hypotheses of 
Lemma~\ref{lem:extremal_rainbow_Kr}
because~(I) holds from 
Lemma~\ref{lem:nonextremal_rainbow_Kr}, (II)   
holds 
from~\eqref{eqn:5.2.2024.11:15a}, and~(III) is inherited from $(G, c)$.  
Lemma~\ref{lem:extremal_rainbow_Kr}
guarantees that $F = K^L_{s-1}$ for $L = \tfrac{n}{s-1} \in \mathbb{N}$ and that $c_F$ properly colors $E(F)$.  
To conclude the proof of 
Theorem~\ref{thm:rainbowKrExact}, we 
use Observation~\ref{obs:proper_to_rainbow2} 
to show that the spanning subgraph $F$ is, in fact, $G$.   
For that, 
write $F = K[V_1, \dots, V_{s-1}]$
for a partition 
$V(F) = V_1 \, \dot\cup \, \dots \, \dot\cup \, V_{s-1}$ 
satisfying that $|V_1| = \dots = |V_{s-1}| = L$.
Fix, if possible, $uv \in E(G) \setminus E(F)$
and let, w.l.o.g., $u, v \in V_{s-1}$  
for $A = V_{s-1}$ and $B = 
V_1 \, \dot\cup \, \dots \, \dot\cup \, V_{s-2}$.    
Here, $(F, c_F)$ meets 
the hypotheses of 
Observation~\ref{obs:proper_to_rainbow2} 
because 
$F[A]$ contains the rainbow $uv$ (see~(i) there), 
$F[B] = K^L_{s-2} = K[V_1, \dots, V_{s-2}]$ is properly colored by $c_F$ (see~(ii) there), 
and $F[A, B] = K[A, B]$ is properly colored by $c_F$ (see~(iii) there).   
Observation~\ref{obs:proper_to_rainbow2} 
guarantees that $(F, c_F)$ admits a rainbow $K_2 \vee K_{s-2} = K_s$, a contradiction.



\section{Proof of 
Lemma~\ref{lem:nonextremal_rainbow_Kr}}   
\label{sec:nonext}
We prove Lemma~\ref{lem:nonextremal_rainbow_Kr} by reducing 
it 
to a proposition 
on directed graphs (itself proven in a moment).   


\begin{proposition}
\label{prop:triangleKr}
Fix an integer $s \geq 2$, a $\beta_0 > 0$, 
and a sufficiently small 
$\alpha_0 = \alpha_0(s, \beta_0) > 0$.  
Let $D_0$ be a sufficiently large $m$-vertex directed graph satisfying  
$\delta^+(D_0) \geq \big(1 - \tfrac{1}{s-1} - \alpha_0\big)m$ and 
that, for each $0 \leq r \leq s/2$ an integer, 
$D_0$ contains no copies of 
either 
$\mathbb{K}_s - \triangle_{\%} - \mathcal{M}_r$  
or $\mathbb{K}_s - \mathcal{M}_r$.      
Then, the underlying simple graph $G_0 = G(D_0)$ is $(K_s, \beta_0)$-extremal.  
\end{proposition}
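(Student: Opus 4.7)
The plan is to prove Proposition~\ref{prop:triangleKr} via a multigraph stability argument on $M := M(D_0)$. The out-degree hypothesis gives $e(M) = |E(D_0)| \ge \bigl(1 - \tfrac{1}{s-1} - \alpha_0\bigr)m^2$, and the correspondence between $\mathbb{K}_s - {\mathcal M}_r$ in $D_0$ and in $M$ (as a multiplicity pattern on $s$ vertices) lets Theorem~\ref{lem:multigraphturan}(1) force $e(M') \le \bigl(1 - \tfrac{1}{s-1}\bigr)|V(M')|^2$ for every induced submultigraph $M' \subseteq M$. Thus $M$ lies within $\alpha_0 m^2$ of the multigraph Tur\'an threshold, and the goal is to produce the approximate $(s-1)$-partition certifying $(K_s, \beta_0)$-extremality of $G_0 = G(D_0)$.

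First, pass to a high-min-degree core: Observation~\ref{obs:mindeg} (with $d = 1 - \tfrac{1}{s-1}$, $\alpha = \alpha_0$, and a small auxiliary $\eta = \eta(\alpha_0) \ll \beta_0$) extracts an induced submultigraph $M_0' \subseteq M$ on $m' \ge (1-\eta)m$ vertices with $\delta(M_0') \ge 2\bigl(1 - \tfrac{1}{s-1} - \eta\bigr)m'$. Theorem~\ref{lem:multigraphturan}(1) applied to $M_0'$ with ``$s$'' replaced by $s-1$ (valid for $\eta < \tfrac{1}{(s-1)(s-2)}$) then supplies a template $T = \{y_1, \ldots, y_{s-1}\}$ with $M_0'[T] = \mathbb{K}_{s-1} - {\mathcal M}_{q'}$ for some $0 \le q' \le (s-1)/2$. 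The next step is to \emph{purify} $T$ (push $q'$ to $0$): iteratively applying Observation~\ref{obs:deg}, the common 2-cycle neighborhood $N := \bigcap_{i \ge 3} N_H(y_i)$ has size $\Omega(m')$, and for a light matched pair $\{y_1, y_2\}$ one argues that either some $z \in N$ 2-cycles with exactly one of $\{y_1, y_2\}$ (enabling a swap that reduces $q'$), or else the remaining $z$'s concentrate in an orientation pattern of $\{y_1, z\}, \{y_2, z\}$ which, together with the arc $(y_1, y_2) \in D_0$, completes a cyclic directed triangle in $\overline{D_0}$ on $\{y_1, y_2, z\}$; appending $\{y_3, \ldots, y_{s-1}\}$ then produces a forbidden $\mathbb{K}_s - \triangle_{\%} - {\mathcal M}_{q'-1}$, contradiction. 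Iteration drives $q'$ to $0$, and this is precisely the step that invokes the $\triangle_{\%}$-forbiddance.

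With the pure template $M_0'[T] = \mathbb{K}_{s-1}$, classify each $z \in V(M_0') \setminus T$ by its failure set $F(z) := \{i : \mu_M(z y_i) < 2\}$. Avoidance of $\mathbb{K}_s$ forces $F(z) \ne \emptyset$, and avoidance of $\mathbb{K}_s - {\mathcal M}_1$ forces, when $|F(z)| = 1$, that the unique failing edge be missing rather than light. Define $V_i := \{y_i\} \cup \{z : F(z) = \{i\},\ \mu_M(z, y_i) = 0\}$ and let $R$ collect all other vertices of $V(D_0)$. For any $z_1, z_2 \in V_i$, the $s$-set $\{z_1, z_2\} \cup \{y_j : j \ne i\}$ has all heavy pairs except possibly $\{z_1, z_2\}$, so heavy or light multiplicity there would yield a forbidden $\mathbb{K}_s$ or $\mathbb{K}_s - {\mathcal M}_1$; hence $\{z_1, z_2\}$ is missing in $M$ and $V_i$ is independent in $G_0$. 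A double-counting of failures, with each $\sum_i|\{z : i \in F(z)\}|$ controlled via Observations~\ref{obs:extmultiHdeg} and~\ref{obs:deg} applied to the resulting partition and with $\triangle_{\%}$-forbiddance ruling out light-failure concentrations, then yields $|R| \le \beta_0 m$ and balanced $|V_i| \ge \tfrac{m}{s-1} - \beta_0 m$, completing $(K_s, \beta_0)$-extremality of $G_0$.

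The main obstacle is the purification step: the $\mathbb{K}_s - {\mathcal M}_r$-forbiddance alone does not eliminate light matched pairs from the template, since certain orientation patterns for the candidate replacement $z$ produce no $\mathbb{K}_s - {\mathcal M}_q$ substructure, while the $\triangle_{\%}$-forbiddance handles precisely the cyclic orientation pattern, and iterated swaps using the high minimum degree on $N$ must be used to dispose of the non-cyclic patterns. A secondary obstacle is sharpening the bound on $|R|$ from the crude $O(m)$ degree-sum estimate to the $o(m)$ required by extremality, where once more the $\triangle_{\%}$-forbiddance must be leveraged on multi-failure configurations rather than a naive counting.
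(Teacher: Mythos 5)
Your approach is genuinely different from the paper's: you aim to build a $\mathbb{K}_{s-1}-\mathcal{M}_{q'}$ template directly in a high-minimum-degree core, purify it to all-heavy $\mathbb{K}_{s-1}$, and then classify vertices by their failure sets, whereas the paper first passes to the multigraph stability result Proposition~\ref{prop:nonext} (which produces $(P_1)$/$(P_2)$-structured classes $V_1,\dots,V_\ell$) and only then invokes Fact~\ref{fact:cyclic_triangle} inside a $(P_2)$-class to extract a cyclic triangle.

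However, your purification step has a genuine gap that the paper's route avoids. You assert that $N := \bigcap_{i\ge 3} N_H(y_i)$ has size $\Omega(m')$, but the only generic heavy-degree bound available from the minimum-degree hypothesis is $d_H(y_i) = d_M(y_i) - d_G(y_i) \ge \delta(M_0') - (m'-1) \ge \bigl(1 - \tfrac{2}{s-1} - O(\eta)\bigr)m'$, so each $N_H(y_i)$ can miss up to $\bigl(\tfrac{2}{s-1} + O(\eta)\bigr)m'$ vertices. Intersecting the $s-3$ sets $N_H(y_3),\dots,N_H(y_{s-1})$ then loses up to $\tfrac{2(s-3)}{s-1}m' + O(\eta)m'$, and $\tfrac{2(s-3)}{s-1} \ge 1$ for all $s \ge 5$. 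So $N$ can be empty, and the swap/cyclic-triangle dichotomy you build on $N$ never gets off the ground. (A secondary issue: you cite Observation~\ref{obs:deg} for this bound, but that observation controls $d_M(v,U)$, not $d_H(v,U)$; the heavy-degree analogue Observation~\ref{obs:extmultiHdeg} is only available when $U$ already satisfies $(P_1)$ or $(P_2)$, which your template vertices do not provide.) This is precisely why the paper routes through Proposition~\ref{prop:nonext} first: once a class $V_k$ with property $(P_2)$ is in hand, Observation~\ref{obs:extmultiHdeg} gives the much stronger bound $d_H(u,\overline{V}_k) \ge |\overline{V}_k| - O(\alpha+\beta)n$ for all $u\in V_k$, so intersecting three heavy neighborhoods (of the cyclic triangle $x,y,z$) costs only $O(\alpha+\beta)n$ rather than a constant fraction of $m$. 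Without a comparably boosted heavy-degree estimate, the subsequent classification and $|R| \le \beta_0 m$ accounting are not reachable, so the proposal as written does not close.
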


It is easy to check
that 
Proposition~\ref{prop:triangleKr}
implies 
Lemma~\ref{lem:nonextremal_rainbow_Kr}.

\subsection*{Proof of 
Lemma~\ref{lem:nonextremal_rainbow_Kr}}
Fix $s \in \mathbb{N}$ and $\beta > 0$.
We define auxiliary constant $\beta_0 = \beta / 6$ and take $\alpha_0 = \alpha_0(s, \beta_0) > 0$ 
and $\alpha = \alpha_0 / 7$ 
sufficiently small.  
We next fix even smaller $d_0 = d(s, \beta_0, \alpha) > 0$ and $\eps = \eps(s, \beta_0, \alpha, d_0) > 0$  
for applications of 
Theorem~\ref{thm:degree_form} and  
Lemma~\ref{lem:MDr_to_rainbowKr}.  
Now, 
let $(F, c_F)$ be a large $n$-vertex edge-colored graph 
satisfying hypothesies~(i)--(iii) of  
Lemma~\ref{lem:nonextremal_rainbow_Kr}, where for notational simplicity we now write $(G, c) = (F, c_F)$.  
Let $D$ be a $(G, c, \sqrt{n})$-digraph of $(G, c)$.  
For $t_0 = 1$ and 
$d_0, \eps > 0$ above, 
let $V(D) = V_0 \, \dot\cup \, V_1 \, \dot\cup \, \dots \, \dot\cup \, V_t$
and $D' \subseteq D$ be those objects guaranteed by 
Theorem~\ref{thm:degree_form}.  
Lemma~\ref{lem:MDr_to_rainbowKr}
and Remark~\ref{rem:embedding} guarantee that 
$D_0 = D' - V_0$ contains copies of 
neither 
$\mathbb{K}_s - \triangle_{\%} - {\mathcal M}_r$
nor 
$\mathbb{K}_s - {\mathcal M}_r$
lest $(G, c)$ admits a rainbow $K_s$.    
We therefore apply 
Proposition~\ref{prop:triangleKr}
to $D_0$ 
with $m = n - |V_0|$.  
To see that 
$\delta^+(D_0)$ is large enough, we first note that 
\begin{equation}
    \label{eqn:5.8.2024.4:44p}
\delta^+(D) 
\stackrel{\text{\tiny 
Obs.\ref{obs:GtoDoutdeg}}}{\geq} 
\delta^c(G) - \sqrt{n} 
\stackrel{\text{(ii)}}{\geq}  
\big(1 - \tfrac{1}{s-1} - \alpha \big)n - \sqrt{n}  
\geq 
\big(1 - \tfrac{1}{s-1} - 2\alpha \big)n
\end{equation}  
to which 
Theorem~\ref{thm:degree_form} adds 
\begin{multline*}  
\delta^+(D_0) = \delta^+(D' - V_0) 
\geq \delta^+(D) - (3 d_0 + \eps)n - |V_0| 
\geq \delta^+(D) - (3 d_0 + 2 \eps)n \\ 
    \stackrel{\eqref{eqn:5.8.2024.4:44p}}{\geq}  
\big(1 - \tfrac{1}{s-1} - 2\alpha - 3d_0 - 2\eps \big)m
\geq 
\big(1 - \tfrac{1}{s-1} - 7\alpha \big)m  
= 
\big(1 - \tfrac{1}{s-1} - \alpha_0 \big)m.      
\end{multline*}  
Proposition~\ref{prop:triangleKr}
guarantees that the simple graph $G_0$ underlying $D_0$ 
is 
$(K_s, \beta_0)$-extremal, so let   
$U_1, \dots, U_{s-1} \subseteq V(G_0) = V(G) - V_0 \subseteq V(G)$ be pairwise disjoint subsets satisfying 
$$
|U_1|, \dots, |U_{s-1}| 
\geq 
\tfrac{m}{s-1} - \beta_0 m 
= 
\tfrac{n - |V_0|}{s-1} - \beta_0 (n - |V_0|)     
\geq \tfrac{n}{s-1} - (\beta_0 + \eps)n 
\geq \tfrac{n}{s-1} - 2\beta_0n   
> \tfrac{n}{s-1} - \beta n   
$$
and 
$e_{G_0}(U_1), \dots, e_{G_0}(U_{s-1}) \leq \beta_0 m^2 \leq \beta_0 n^2$.  
The simple graph $G_D$ underlying $D$ therefore satisfies 
$$
e_{G_D}(U_1), \dots, e_{G_D}(U_{s-1}) \leq \beta_0 n^2 
+ 
|V_0| n 
+ 
n (3d_0 + \eps)n 
= 
(\beta_0 + 3d_0 + 2\eps) n^2 
\leq 6 \beta_0 n^2  
= 
\beta n^2, 
$$
so $G_D$ is $(K_s, \beta)$-extremal.  
Since $(G, c)$ is edge-minimal, 
every $\{u, v\} \in E(G)$ yields $(u, v)$ or $(v, u)$ in $E(D)$
so $G = G_D$ is the simple graph underlying $D$ (and is hence $(K_s, \beta)$-extremal).

\subsection*{On the proof of 
Proposition~\ref{prop:triangleKr}}  
We prove Proposition~\ref{prop:triangleKr} by reducing it to one     
on multigraphs.




\begin{restatable}{proposition}{nonext}
\label{prop:nonext}
Fix an integer $s \geq 2$, a $\beta > 0$, and a sufficiently small $\alpha = \alpha(s, \beta) > 0$. 
Fix a sufficiently large $n$-vertex standard multigraph $M$ 
satisfying that 
$\delta(M) \geq \big(2 - \tfrac{2}{s-1} - \alpha\big)n$  
and that, for each $0 \leq r \leq s / 2$ an integer,  
$M$ 
contains no copies 
of $\mathbb{K}_s - {\mathcal M}_r$.
There exist pairwise disjoint $V_1, \dots,  V_{\ell} \subseteq V(M)$, for  
$\ell \leq s - 1$, 
where each 
class $V_i$ satisfies at least one of the following
properties:  
\begin{enumerate}
\item[$(P_1)$]  
$V_i$ is independent and 
$|V_i| \geq \big(\tfrac{1}{s-1} - \beta\big)n$; 
\item[$(P_2)$]  
$V_i$ has no three points spanning five or more edges of $M$ and 
$|V_i| \geq \big(\tfrac{2}{s-1} - \beta\big) n$.
\end{enumerate}
\end{restatable}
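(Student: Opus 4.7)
The plan is to combine Theorem~\ref{lem:multigraphturan}(1) with the minimum-degree hypothesis to show $M$ is near-extremal, then expose a near-balanced $(s-1)$-partite structure and extract the $V_i$'s from it. By Theorem~\ref{lem:multigraphturan}(1) and the forbidden structures, $e(M) \le (1 - 1/(s-1))n^2$; by the minimum-degree bound, $e(M) \ge n\,\delta(M)/2 \ge (1 - 1/(s-1) - \alpha/2)n^2$. So $e(M)$ lies within $O(\alpha)n^2$ of the Tur\'an-type value, the unique extremal configuration being the balanced complete $(s-1)$-partite multigraph $K^{n/(s-1)}_{s-1}$ with all heavy edges, and a short calculation from the minimum degree yields that every vertex has at most $(1/(s-1) + \alpha/2)n$ non-neighbors in $M$.

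The main step is to prove a stability-type partition: there exist pairwise disjoint $U_1, \ldots, U_k \subseteq V(M)$ with $k \le s - 1$, each $|U_i| = n/(s-1) \pm \beta' n$ and $e_M(U_i) \le \beta' n^2$ for some $\beta' \ll \beta$. I would establish this by applying the Erd\H{o}s--Simonovits stability theorem to the heavy graph $H = H(M)$, which is $K_s$-free (else an $s$-clique in $H$ would be $\mathbb{K}_s - \mathcal{M}_0 \subseteq M$). To verify the ES hypothesis I would first bound the number of light edges: for any light edge $uv$, the heavy subgraph induced on the common heavy neighbors of $u$ and $v$ is $K_{s-2}$-free (else a heavy $K_{s-2}$ there combined with $\{u,v\}$ would give $\mathbb{K}_s - \mathcal{M}_1 \subseteq M$), and a counting argument against the minimum degree then forces $e(L) \le O(\alpha)n^2$ and hence $e(H)$ near its Tur\'an maximum. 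A Zykov-type symmetrization — iteratively moving each vertex to the part minimizing its within-part heavy degree — then refines the ES partition into the required balanced form.

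With the partition in hand, for each $U_i$ with $e_M(U_i)$ sufficiently small I would extract $V_i \subseteq U_i$ as a $(P_1)$ class by iteratively deleting the (few) high-internal-degree vertices of $U_i$ (in the spirit of Observation~\ref{obs:mindeg}) and then invoking a matching/cover or Caro--Wei bound on the residual subgraph to produce an independent subset of size $\ge (1/(s-1) - \beta)n$. The $(P_2)$ alternative in the proposition is essential in the base case $s = 3$, where one may take $V_1 = V(M)$ itself (the $\mathbb{K}_3 - \mathcal{M}_r$-freeness implies every triple spans at most four edges); it is also available as a fallback when a $U_i$ has too concentrated an edge structure for clean $(P_1)$ extraction, in which case a merge $U_i \cup U_j$ with a suitable neighbor (where cross-class heavy density is controlled via Observations~\ref{obs:extmultiHdeg} and~\ref{obs:deg}) provides the required $(P_2)$ class.

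The main obstacle is the quantitative stability step. Standard Erd\H{o}s--Simonovits stability bounds the total within-class edges only by $O(\alpha)n^2$, which for constant $\alpha$ exceeds what Caro--Wei-type arguments directly convert into a linear-sized independent subset. One must therefore further refine the partition — through iterated symmetrization cleaning that uses the minimum-degree condition to progressively reduce within-class edge counts to sub-quadratic magnitude — or invoke the $(P_2)$ flexibility of the proposition as a fallback. Orchestrating these strategies while precisely matching the size thresholds $(1/(s-1) - \beta)n$ and $(2/(s-1) - \beta)n$, and propagating the constants $\alpha \ll \beta$ cleanly through the argument, is the delicate technical part.
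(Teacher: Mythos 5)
Your strategy---bound $e(M)$ via Theorem~\ref{lem:multigraphturan}, infer approximate extremality, invoke Erd\H{o}s--Simonovits on the heavy graph $H$, symmetrize, then extract $(P_1)/(P_2)$ classes---is a genuinely different route from the paper's, but it has a gap that you yourself flag and do not close, and it is not a minor quantitative nuisance: it is the core of the argument.

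The issue is that the proposition demands \emph{exact} structural properties ($V_i$ literally independent, or literally $\MK_3$-free), not merely sparse sets. Erd\H{o}s--Simonovits gives only $O(\alpha)n^2$ edges inside the putative parts. Since $\alpha$ is a constant (however small), a set on $\Theta(n)$ vertices with $O(\alpha)n^2$ internal edges has average degree $\Theta(\alpha n)$, so Caro--Wei (or any greedy deletion) produces an independent subset of size only $O(1/\alpha)$ --- a constant, not a set of size $(\tfrac{1}{s-1}-\beta)n$. Your proposed ``iterated symmetrization cleaning'' is named but not specified, and for multigraphs the Zykov move does not obviously preserve $\MK_s$-freeness (swapping a vertex's light/heavy adjacency pattern to match another vertex can create new near-cliques). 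Moreover, reducing to the simple heavy graph $H$ being $K_s$-free throws away information: $\MK_s$-freeness of $M$ also forbids heavy $K_{s-1}$'s extended by vertices with a single light non-adjacency, etc.; your own bound on light edges tries to recover this, but $(P_2)$ (sets with no three vertices spanning five edges) is a mixed light/heavy property that the $H$-only stability picture does not naturally produce.

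The paper avoids all of this by proving the proposition directly by induction on $s$. The pivotal step (Claim~\ref{clm1}) shows that if $M$ is \emph{not} $(\MK_s,\beta)$-extremal, then no independent set of size $\approx n/(s-1)$ and no $\MK_3$-free set of size $\approx 2n/(s-1)$ can exist: any such set $U$ has a heavy common neighborhood $W$ of size $\approx (1-\tfrac{1}{s-1})n$ (or $\approx (1-\tfrac{2}{s-1})n$) which is $\MK_{s-1}$-free (resp.\ $\MK_{s-2}$-free) with the right min-degree, so the inductive hypothesis makes $M[W]$, and hence $M$, extremal --- a contradiction. The rest of the proof fixes a maximum $\MK_{s-1}$ or $\MK_{s-2}$, analyzes vertex degrees into it, and manufactures precisely such a forbidden set. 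This builds in exact independence/$\MK_3$-freeness from the start, rather than trying to upgrade approximate sparsity. If you want to rescue your stability route, you would need a concrete cleaning argument producing exactly-independent linear-sized subsets from $O(\alpha)n^2$-sparse classes under the minimum-degree hypothesis, together with an argument showing the $(P_2)$ fallback actually fires when cleaning fails; as written, that is the missing proof.
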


\noindent  The proof of Proposition~\ref{prop:nonext} is non-trivial.  We devote special attention 
to it in the next section.  For our current purposes,  
we will also need the following fact (itself proven in a moment).
\begin{fact}
\label{fact:cyclic_triangle}
Fix $0 < 25 \alpha < 1$.  
Let $D$ be a 
sufficiently large $n$-vertex digraph 
satisfying $\delta^+(D) \geq \big(\tfrac{1}{2} - \alpha\big)n$ 
and let $D$ 
have no cyclically directed triangles.   
Then 
the underlying graph $G = G(D)$ of $D$ is $\big(K_3, 4\alpha^{1/2}\big)$-extremal.  
\end{fact}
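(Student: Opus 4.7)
\medskip

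\noindent The plan is to reduce Fact~\ref{fact:cyclic_triangle} to Proposition~\ref{prop:nonext} applied to the associated standard multigraph $M(D)$. The starting observation is that the absence of a cyclic triangle forces $N^-(u)\cap N^+(v)=\emptyset$ for every arc $(u,v)\in E(D)$, so $d^-(u)+d^+(v)\le n$. Combined with $\delta^+(D)\ge(\tfrac12-\alpha)n$, this yields the global bound $\Delta^-(D)\le(\tfrac12+\alpha)n$ and $|E(D)|\in[(\tfrac12-\alpha)n^2,(\tfrac12+\alpha)n^2]$. A routine deficit argument (in the spirit of Observation~\ref{obs:mindeg}) then shows that all but at most $2\alpha^{1/2}n$ vertices have both $d^+$ and $d^-$ in $[(\tfrac12-\alpha^{1/2})n,(\tfrac12+\alpha^{1/2})n]$.

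\medskip

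\noindent I would restrict to this ``balanced'' subset $V'\subseteq V$, where $|V'|\ge(1-2\alpha^{1/2})n$, and work with $D':=D[V']$. The multigraph $M(D')$ then satisfies $\delta(M(D'))=\min_{v\in V'}(d^+_{D'}(v)+d^-_{D'}(v))\ge(1-6\alpha^{1/2})|V'|$; and since $D'$ inherits ``no cyclic triangle'' from $D$, $M(D')$ contains no triangle of weight $\ge 5$, i.e.\ no copy of $\mathbb{K}_3-\mathcal{M}_r$ for $r\in\{0,1\}$. These are precisely the hypotheses of Proposition~\ref{prop:nonext} with $s=3$, auxiliary $\alpha^\ast=3\alpha^{1/2}$, and any chosen $\beta<1/4$.

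\medskip

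\noindent Proposition~\ref{prop:nonext} then yields pairwise disjoint $V_1',V_2'\subseteq V'$ (the $\ell=s-1=2$ outcome), each of size $\ge(\tfrac12-\beta)|V'|$ and each satisfying $P_1$ or $P_2$. Because a $P_2$-class for $s=3$ has size $\ge(1-\beta)|V'|$, for $\beta<1/4$ the alternatives ``two disjoint $P_2$-classes'' or ``one $P_1$ plus one $P_2$'' violate $|V_1'|+|V_2'|\le|V'|$; hence both $V_1',V_2'$ satisfy $P_1$, meaning each is independent in $M(D')$. Independence in $M$ translates to no arcs of $D'$, hence no edges of $G=G(D)$, within either class. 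Setting $V_1:=V_1'$ and $V_2:=V_2'$ and taking $\beta$ of order $\alpha^{1/2}$ produces disjoint subsets of size $\ge(\tfrac12-\beta)(1-2\alpha^{1/2})n\ge\tfrac{n}{2}-4\alpha^{1/2}n$ with $e_G(V_i)=0<4\alpha^{1/2}n^2$, establishing $(K_3,4\alpha^{1/2})$-extremality.

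\medskip

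\noindent The hardest part will be pinning down the quantitative output of Proposition~\ref{prop:nonext}: its statement only guarantees $\ell\le s-1$ disjoint classes, and the degenerate cases $\ell\le 1$ must be excluded. Because $M(D')$ has $e(M(D'))$ close to the Tur\'an threshold $|V'|^2/2$ from Theorem~\ref{lem:multigraphturan}, the natural output should be the near-bipartite-heavy structure, namely $\ell=2$ classes both of type $P_1$. Converting this intuition into rigor --- either by reading off the appropriate stability conclusion inside the proof of Proposition~\ref{prop:nonext}, or by a supplementary \emph{ad hoc} argument that seeds a second $P_1$-class from a vertex $v^\ast$ of maximum $2$-cycle degree $d_H(v^\ast)=|N^+(v^\ast)\cap N^-(v^\ast)|$ --- is the principal technical obstacle.
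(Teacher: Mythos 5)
Your preliminary observations are sound: the absence of a cyclic triangle does force $N^-_D(u)\cap N^+_D(v)=\emptyset$ for every arc, which gives the in-degree bound and hence the deficit argument, and indeed ``no cyclic triangle'' implies that $M(D)$ contains no $\widetilde{K}_3$. The gap, however, is that the reduction to Proposition~\ref{prop:nonext} with $s=3$ does not merely have a ``degenerate case to be excluded'' --- it is structurally vacuous. For $s=3$, property $(P_2)$ asks for a class with no three points spanning $\ge 5$ edges, i.e.\ a $\widetilde{K}_3$-free class, of size $\ge(1-\beta)n$. Since $M(D')$ is \emph{entirely} $\widetilde{K}_3$-free, taking $\ell=1$ and $V_1=V(M(D'))$ always satisfies the conclusion. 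The proposition's proof is an induction that begins at $s=3$, and this is precisely the (trivial) base case; there is nothing further to ``read off'' from it. So the statement of Proposition~\ref{prop:nonext} gives you no structural information when $s=3$, and your argument that ``both $V_1',V_2'$ satisfy $P_1$'' never gets off the ground because you are never guaranteed two classes.

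Your proposed fallback --- an \emph{ad hoc} argument seeded from a vertex $v^\ast$ of large $2$-cycle degree $d_H(v^\ast)$ --- is in fact close to what the paper does, but the hard part is exactly what you leave open: proving that such a vertex exists. The paper's proof of Fact~\ref{fact:cyclic_triangle} first uses Observation~\ref{obs:mindeg} (not Proposition~\ref{prop:nonext}) to pass to an induced submultigraph $M_0$ with $\delta(M_0)\ge(1-3\alpha^{1/2})n$, then splits into cases on whether some vertex $v_1$ has $d_H(v_1)\ge(\tfrac12-\alpha^{1/2})n$. In the positive case, $N_H(v_0)$ and $N_H(v_1)$ (for a neighbor $v_0$ of $v_1$ in $H$) are disjoint independent sets of the right size --- your intuition about near-bipartite-heavy structure is correct here. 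In the negative case a contradiction is derived by choosing $u_1$ with $d^-_D(u_1)\ge\delta^+(D)$, then finding $u_0\in N^+_D(u_1)\setminus N^-_D(u_1)$ with $d^+_D(u_0,N^+_D(u_1)\setminus N^-_D(u_1))\le\tfrac12|N^+_D(u_1)\setminus N^-_D(u_1)|$ and exploiting triangle-freeness to bound $|N^+_D(u_1)\setminus N^-_D(u_1)|$, which pushes $d_H(u_1)$ up to the Case~1 threshold. This genuinely self-contained case analysis is what your proposal is missing.
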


We next infer Proposition~\ref{prop:triangleKr}
from the results above.  


\subsection*{Proof of Proposition~\ref{prop:triangleKr}}  
Fix an integer $s \geq 2$ and fix $\beta = \beta_0 > 0$.  We take $\alpha = \alpha_0 = \alpha_0(\beta) > 0$ small enough to enable the calculations
below which feature, for notational simplicity,  
multiple 
undisclosed but vanishing quantities $o_{\alpha}(1) \to 0^+$ in $\alpha \to 0^+$.  
Fix a directed graph $D = D_0$ 
on a large number $n = m$ of vertices 
satisfying the hypotheses of 
Proposition~\ref{prop:triangleKr}.    
By these hypotheses, 
the underlying standard multigraph $M = M(D)$ of $D$ satisfies
$$
e(M) = e(D) = \sum_{v \in V} d_D^+(v) 
\geq 
\big(1 - \tfrac{1}{s-1} - \alpha\big) n^2.  
$$
These same hypotheses say that 
$M$ 
contains no $\mathbb{K}_s - \mathcal{M}_r$, so 
every induced submultigraph $M_0 \subseteq M$ satisfies  
$$
e(M_0) 
\stackrel{\text{\tiny Thm.\ref{lem:multigraphturan}}}{\leq}  
\big(1 - \tfrac{1}{s-1}\big) |V(M_0)|^2.  
$$
Observation~\ref{obs:mindeg} guarantees an $m$-vertex such $M_0 = M_0'$ satisfying 
\begin{equation}
\label{eqn:5.21.2024.5:45}
m \geq 
\big(1 - o_{\alpha}(1) \big)n 
\qquad \text{and} \qquad 
\delta(M_0) \geq 
2
\big(1- \tfrac{1}{s-1} - o_{\alpha}(1) \big) m
\stackrel{\eqref{eqn:5.21.2024.5:45}}{\geq}
2
\big(1- \tfrac{1}{s-1} - o_{\alpha}(1) \big) n.    
\end{equation}  
Proposition~\ref{prop:nonext} 
now applies 
to $M_0$ to guarantee pairwise disjoint 
classes $V_1, \dots, V_{\ell} \subseteq V(M_0) \subseteq V(M)$, for 
$\ell \leq s - 1$, where each class $V_i$ 
satisfies at least one of $(P_1)$ or $(P_2)$ there.  
If all classes satisfy $(P_1)$, i.e., 
\begin{multline*}  
|V_1|, \dots, |V_{\ell}| \geq 
\big(\tfrac{1}{s-1} - o_{\alpha}(1) \big) m
\stackrel{\eqref{eqn:5.21.2024.5:45}}{\geq}
\big(\tfrac{1}{s-1} - \beta \big) n  
\quad \text{(and thus $\ell = s$)} \\
\text{and} \qquad 
e_{M_0}(V_1) = 
e_{M}(V_1) = \dots =  
e_{M_0}(V_{\ell}) = 
e_{M}(V_{\ell}) = 0 < \beta n^2,  
\end{multline*}
then the underlying simple graph $G = G(M)$ is immediately $(K_s, \beta)$-extremal.  
In a very similar way, if every class $V_j$ satisfying~$(P_2)$ also satisfies that $G[V_j]$ is $\big(K_3, o_{\alpha}(1)\big)$-extremal, viz.,  
\begin{equation}
\label{eqn:5.22.2024.12:46p}
|V_j| \geq 
\big(\tfrac{2}{s-1} - o_{\alpha}(1) \big)m  
\stackrel{\eqref{eqn:5.21.2024.5:45}}{\geq}
\big(\tfrac{2}{s-1} - o_{\alpha}(1) \big)n  
\end{equation}
and that, 
for some disjoint $T_j, U_j \subset V_j$, 
$$
|T_j|, |U_j| \geq \big(\tfrac{1}{2} - o_{\alpha}(1)\big) |V_j|
\stackrel{\eqref{eqn:5.22.2024.12:46p}}{\geq}  
\big(\tfrac{1}{s-1} - \beta \big)n
\qquad \text{and} \qquad 
e_G(T_j), \, e_G(U_j) \leq \beta n^2,   
$$
then $G = G(M)$ is immediately $(K_s, \beta)$-extremal.  
Thus, for sake of argument, we assume that some class $V_k$ satisfies $(P_2)$ and that $G[V_k]$ is not $\big(K_3, o_{\alpha}(1)\big)$-extremal.
We claim this same $V_k$ has the following additional properties 
(which we easily prove in a moment).  

\begin{claim}
\label{clm:5.21.2024.6:53p}
$|V_k| = \big(\tfrac{2}{s-1} \pm o_{\alpha}(1) \big)m$ and 
$\delta^+(D[V_k]) \geq \big(\tfrac{1}{2} - o_{\alpha}(1)\big) |V_k|$.  
\end{claim}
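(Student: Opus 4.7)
The plan is two-step: first pin $|V_k|$ to $(\tfrac{2}{s-1} \pm o_\alpha(1))m$, and then bootstrap this size estimate to the out-degree bound inside $D_0[V_k]$.

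For the size of $V_k$, the lower bound $|V_k| \geq (\tfrac{2}{s-1} - o_\alpha(1))m$ is immediate from property $(P_2)$ of Proposition~\ref{prop:nonext}, which furnishes $|V_k| \geq (\tfrac{2}{s-1} - \beta)n$, combined with $m \geq (1 - o_\alpha(1))n$ from~\eqref{eqn:5.21.2024.5:45}. The upper bound $|V_k| \leq (\tfrac{2}{s-1} + o_\alpha(1))m$ is the main obstacle; I would obtain it by sandwiching $d_{M_0}(u, V_k)$ from two sides for an arbitrary $u \in V_k$. The multigraph minimum-degree estimate $\delta(M_0) \geq 2(1 - \tfrac{1}{s-1} - o_\alpha(1))m$ from~\eqref{eqn:5.21.2024.5:45}, together with the trivial bound $d_{M_0}(u, V(M_0) \setminus V_k) \leq 2(m - |V_k|)$, yields the lower sandwich
$$
d_{M_0}(u, V_k) \;\geq\; 2|V_k| - \tfrac{2m}{s-1} - o_\alpha(m).
$$
For the upper sandwich I split cases on whether $u$ has a heavy neighbor in $V_k$. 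If it does not, then every edge from $u$ into $V_k$ is light, so $d_{M_0}(u, V_k) \leq |V_k| - 1$ trivially. If it does have some heavy neighbor $v \in V_k$, then applying $(P_2)$ to the triples $\{u,v,w\}$ for all $w \in V_k$ (using $\mu_M(uv) = 2$ to force $\mu_M(uw) + \mu_M(vw) \leq 2$) and summing --- the exact manipulation carried out in~\eqref{eqn:4.29.2024.3:17p} inside Observation~\ref{obs:extmultiHdeg}, Case~2 --- gives $d_{M_0}(u, V_k) + d_{M_0}(v, V_k) \leq 2|V_k|$; feeding the lower sandwich back in at $v$ then produces $d_{M_0}(u, V_k) \leq 2m - \delta(M_0) \leq \tfrac{2m}{s-1} + o_\alpha(m)$. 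In either case, combining the two sandwiches forces $|V_k| \leq \tfrac{2m}{s-1} + o_\alpha(m)$.

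For the out-degree conclusion, the argument is immediate once the size is pinned down. For any $v \in V_k$, the hypothesis $\delta^+(D_0) \geq (1 - \tfrac{1}{s-1} - \alpha_0)m$ and the trivial bound $d^+_{D_0}(v, V(M_0) \setminus V_k) \leq m - |V_k|$ give $d^+_{D_0}(v, V_k) \geq |V_k| - \tfrac{m}{s-1} - \alpha_0 m$, and substituting $|V_k| = (\tfrac{2}{s-1} \pm o_\alpha(1))m$ rewrites the right-hand side as $(\tfrac{1}{2} - o_\alpha(1))|V_k|$, as required. The only delicate point I foresee is keeping the two-case structure clean in the size argument, and verifying that property $(P_2)$ is the \emph{only} structural fact about $V_k$ needed there.
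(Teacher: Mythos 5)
Your plan is correct, and the out-degree part coincides with the paper's argument: both deduce $d^+_D(v, V_k) \geq d^+_D(v) - (n - |V_k|)$ and then convert to a fraction of $|V_k|$ using the size estimate just established. (You write $m$ where the paper writes $n$, the vertex count of the ambient digraph $D$; since $m \geq (1 - o_\alpha(1))n$ from~\eqref{eqn:5.21.2024.5:45}, the discrepancy is absorbed into $o_\alpha(1)$, but strictly speaking the trivial bound should be $d^+_{D}(v, V(D) \setminus V_k) \leq n - |V_k|$, not $m - |V_k|$.)

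For the upper bound on $|V_k|$, however, you take a genuinely different route from the paper. The paper applies Theorem~\ref{lem:multigraphturan} to $M_0[V_k]$ (which is $\MK_3$-free by $(P_2)$) to conclude $2e_{M_0}(V_k) \leq |V_k|^2$, hence \emph{some} $u_k \in V_k$ has $d_{M_0[V_k]}(u_k) \leq |V_k|$, and then closes with $\delta(M_0) \leq d_{M_0}(u_k) \leq |V_k| + 2(m - |V_k|)$. You instead fix an \emph{arbitrary} $u \in V_k$ and sandwich $d_{M_0}(u, V_k)$ by hand: the lower sandwich from $\delta(M_0)$, and the upper sandwich by a case split on whether $u$ has a heavy neighbour in $V_k$, using $(P_2)$ directly in the heavy case (the inequality $d_{M_0}(u, V_k) + d_{M_0}(v, V_k) \leq 2|V_k|$ you cite from~\eqref{eqn:4.29.2024.3:17p}) and then feeding the lower sandwich back in at $v$ to get $d_{M_0}(u, V_k) \leq 2m - \delta(M_0)$. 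Both cases then force $|V_k| \leq \tfrac{2m}{s-1} + o_\alpha(m)$. Your version is more elementary in that it avoids invoking the Turán-type theorem entirely; the paper's version is shorter because the averaging step hands it a single good vertex without a case analysis. Both arguments rest only on $(P_2)$ and $\delta(M_0)$, so your self-check that $(P_2)$ is the only structural fact about $V_k$ needed here is accurate.
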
  

Continuing, 
since $G[V_k]$ is not $\big(K_3, o_{\alpha}(1)\big)$-extremal
but 
$\delta^+(D[V_k]) \geq \big(\tfrac{1}{2} - o_{\alpha}(1)\big) |V_k|$, 
Fact~\ref{fact:cyclic_triangle}
guarantees a cyclic triangle $(x, y), (y, z), (z, x) \in E(D[V_k])$.  
Consider now the simple graph $H_0 = H(M_0)$ of heavy edges from $M_0$.  
Since $V_k$ satisfies~$(P_2)$ and~\eqref{eqn:5.21.2024.5:45}, 
the set $\overline{V}_k = V(M_0) \setminus V_k$ satisfies, e.g.,  
$$
d_{H_0}\big(x, \overline{V}_k\big)
\stackrel{\text{{\tiny Obs.\ref{obs:extmultiHdeg}}}}{\geq}
\big|\overline{V}_k\big| - m \cdot o_{\alpha}(1).  
$$
Consequently, the set $W = N_{H_0}(x) \cap N_{H_0}(y) \cap N_{H_0}(z) \cap \overline{V}_k$ satisfies 
$$
|W| \geq \big|\overline{V}_k\big| - 3m \cdot o_{\alpha}(1) 
\stackrel{\text{\tiny{Clm.\ref{clm:5.21.2024.6:53p}}}}{\geq}  
\big(1 - \tfrac{2}{s-1} - o_{\alpha}(1)\big) m  
= 
\big(\tfrac{s-3}{s-1} - o_{\alpha}(1)\big) m  
$$
which, 
again using~\eqref{eqn:5.21.2024.5:45}, 
guarantees 
$$
\delta\big(M[W]\big) 
\stackrel{\text{\tiny{Obs.\ref{obs:deg}}}}{\geq}  
2 
\big(\tfrac{s-4}{s-3} - o_{\alpha}(1) \big) |W|
> 
2 
\big(\tfrac{s-5}{s-4}\big)
|W|
= 
2 \big(1 - \tfrac{1}{s-4}\big) |W|.  
$$
Consequently, 
$$
e_{M_0}(W) = (1/2) \sum_{w \in W} d_{M_0}(w, W) 
>  
\big(1 - \tfrac{1}{s-4}\big) |W|^2
$$
so Theorem~\ref{lem:multigraphturan}
guarantees 
that $M_0[W] = M[W]$ contains a $\mathbb{K}_{s-3} - \mathcal{M}_r$.  
Together with the cyclic triangle $(x, y, z)$, the directed graph $D$ 
contains $\mathbb{K}_s - \triangle_{\%} - \mathcal{M}_r$, contradicting our hypothesis.  
\hfill $\Box$  

\subsubsection*{Proof of Claim~\ref{clm:5.21.2024.6:53p}}  
First, 
the lower bound 
$|V_k| \geq \big(\tfrac{2}{s-1} - o_{\alpha}(1)\big)m$ is from~$(P_2)$.  
For the corresponding upper bound, that 
$M_0[V_k]$ has no $\mathbb{K}_s - \mathcal{M}_r$ guarantees 
$$
\sum_{v_k \in V_k} d_{M_0[V_k]} (v_k) = 2 e_{M_0}(V_k) 
\stackrel{\text{\tiny Thm.\ref{lem:multigraphturan}}}{\leq}  |V_k|^2
$$
so some $u_k \in V_k$ satisfies $d_{M_0[V_k]}(u_k) \leq |V_k|$.  
Now, 
$d_{M_0}(u_k)  
= 
d_{M_0[V_k]}(u_k)  
+ 
d_{M_0 \setminus M_0[V_k]}(u_k)$   
satisfies 
$$
2 \big( 1 - \tfrac{1}{s-1} - o_{\alpha}(1) \big) 
m
\stackrel{\eqref{eqn:5.21.2024.5:45}}{\leq}
\delta(M_0) \leq 
d_{M_0}(u_k)  
\leq |V_k| +  2 (m - |V_k|) = 2m - |V_k|,   
$$
so 
$|V_k| \leq \big(\tfrac{2}{s-1} + o_{\alpha}(1)\big)m$.  
Second, 
\begin{multline*}
\delta^+\big(D[V_k]\big) \geq 
\delta^+(D) 
- 
\big(n - |V_k|\big)
\stackrel{\text{\tiny hyp}}{\geq}
\big(1 - \tfrac{1}{s-1} - \alpha\big) n - 
\big(n - |V_k|\big)
= 
|V_k| - 
\big(\tfrac{1}{s-1} + \alpha \big)n  \\ 
\stackrel{\eqref{eqn:5.22.2024.12:46p}}{\geq} 
\big(\tfrac{1}{s-1} - o_{\alpha}(1) \big)n    
= 
\big(\tfrac{1}{s-1} - o_{\alpha}(1) \big)\tfrac{n}{|V_k|} |V_k|     
\stackrel{\text{\tiny Clm.\ref{clm:5.21.2024.6:53p}}}{\geq}    
\big(\tfrac{1}{s-1} - o_{\alpha}(1)\big)
\big(\tfrac{2}{s-1} + o_{\alpha}(1)\big)^{-1} |V_k|, 
\end{multline*}  
which is at least 
$\big(\tfrac{1}{2} - o_{\alpha}(1)\big) |V_k|$.

\subsection*{Proof of Fact~\ref{fact:cyclic_triangle}}  
Fix $0 < 25 \alpha < 1$.  
Let $D$ be a sufficiently large $n$-vertex directed graph satisfying
$\delta^+(D) \geq \big(\tfrac{1}{2} - \alpha\big)n$ and let $D$ have no 
cyclically directed triangles.  
From these hypotheses, 
both $D$ and its underlying standard multigraph $M(D)$ satisfy 
$$
e(M) = e(D) = \sum_{v \in V(D)} d_D^+(v) \geq \big(\tfrac{1}{2} - \alpha\big)n^2  
$$
while neither 
contains $\mathbb{K}_3 - \mathcal{M}_1$, 
whence 
every induced $M_0 \subseteq M$ satisfies 
$$
e(M_0) 
\stackrel{\text{\tiny Thm.\ref{lem:multigraphturan}}}{\leq}  
(1/2) |V(M_0)|^2.   
$$
Observation~\ref{obs:mindeg} then guarantees an $m$-vertex multigraph $M_0 \subseteq M$ satisfying 
\begin{equation}
    \label{eqn:5.23.2024.12:54p}
m \geq \big(1 - \alpha^{1/2}\big)n \qquad \text{and} \qquad 
\delta(M_0) \geq 
\big(1 - 2\alpha^{1/2}\big) m
    \stackrel{\eqref{eqn:5.23.2024.12:54p}}{\geq}  
\big(1 - 3\alpha^{1/2}\big)n.  
\end{equation}  
Now, 
to prove that the underlying graph $G = G(D)$ of $D$ is 
$\big(K_3, 4 \alpha^{1/2}\big)$-extremal, 
we consider the 
graph $H = H(D)$ of heavy edges
of $D$ and the following two cases.   
\bigskip 

\noindent  {\sc Case 1 $\big(\exists \, v_1 \in V(D): \, d_H(v_1) \geq \big(\tfrac{1}{2} - \alpha^{1/2}\big)n\big)$}.
Fix $v_0 \in V(M_0) \cap N_H(v_1)$.  
The sets $N_H(v_0)$ and $N_H(v_1)$ will witness  
the desired 
extremality of $G$.  
Indeed, 
these sets are 
each 
independent 
and disjoint.  Moreover, 
$N_H(v_1)$ is large enough in this case.  To see that $N_H(v_0)$ is also large enough, we use that 
$N_G(v_1)$ and $N_H(v_0)$ are disjoint    
and 
the simple identity $d_H(v) + d_G(v) = d_M(v)$ holding in standard multigraphs:    
\begin{multline*}
|N_H(v_0)| = 
d_H(v_0) = 
d_M(v_0) - d_G(v_0) 
\geq 
d_{M_0}(v_0) - d_G(v_0)   \\
\stackrel{\text{\tiny{disj}}}{\geq} 
d_{M_0}(v_0) - \big(n - d_H(v_1)\big)   
    \stackrel{\eqref{eqn:5.23.2024.12:54p}}{\geq}
\big(1 - 3\alpha^{1/2}\big)n  
+ 
d_H(v_1) - n 
\stackrel{\text{\tiny{Case~1}}}{\geq} 
\big(\tfrac{1}{2} - 4 \alpha^{1/2}\big)n.  
\end{multline*}
\smallskip 

\noindent  {\sc Case 2 $\big(\forall \, v \in V(D), \, \, 
d_H(v) <\big(\tfrac{1}{2} - \alpha^{1/2}\big)n\big)$}.
Fix $u_1 \in V(D)$ satisfying 
$d_D^-(u_1) \geq \delta^+(D)$.  We claim that 
\begin{equation}
\label{eqn:5.27.2024.11:56a}
\big|N^+_D(u_1) \setminus N^-_D(u_1)\big| \leq 2n - 4 \delta^+(D).  
\end{equation}  
If true, 
the identity $d^+_D(u_1) = \big|N^+_D(u_1) \setminus N^-_D(u_1)\big| + d_H(u_1)$ 
yields 
\begin{multline*}
d_H(u_1) = 
d_D^+(u_1) - 
\big|N^+_D(u_1) \setminus N^-_D(u_1)\big| 
\stackrel{\eqref{eqn:5.27.2024.11:56a}}{\geq}  
d_D^+(u_1)  
+4\delta^+(D) - 2n \\
\geq
5\delta^+(D) - 2n 
\stackrel{\text{\tiny hyp}}{\geq}
5\big(\tfrac{1}{2} - \alpha\big)n - 2n = 
\big(\tfrac{1}{2} - \alpha^{1/2}\big)n
\end{multline*}
which 
contradicts the condition of Case~2.  
To prove~\eqref{eqn:5.27.2024.11:56a}, 
define $I_1 = N^-_D(u_1)$ and $O_1^* = N_D^+(u_1) \setminus I_1$.  
Then 
$$
\sum_{u \in O_1^*} d^+_D\big(u, O_1^*\big) = 
e_D\big(O_1^*\big) = 
e_M\big(O_1^*\big)  
    \stackrel{\text{\tiny Thm.\ref{lem:multigraphturan}}}{\leq}
    (1/2) 
    \big|O_1^*\big|^2 
$$
because 
$M\big[O_1^*\big]$
has no copies of $\mathbb{K}_3 - \mathcal{M}_1$.  
We therefore fix, for sake of argument, $u_0 \in O_1^*$  
satisfying 
\begin{equation}
   \label{eqn:5.27.2024.1:09p} 
d_D^+\big(u_0, O_1^*\big) \leq (1/2) \big|O_1^*\big|.  
\end{equation}
Since $O_1^*, I_1 \subseteq V = V(D)$ are disjoint
and $D$ has no cyclic triangles, 
\begin{multline*}
d_D^+(u_0) = 
d_D^+\big(u_0, O_1^*\big) + 
d_D^+(u_0, I_1) + 
d_D^+\Big(u_0, V \setminus \big(O_1^* \, \dot\cup \, I_1 \big)\Big)
= 
d_D^+\big(u_0, O_1^*\big) + 
d_D^+\Big(u_0, V \setminus \big(O_1^* \, \dot\cup \, I_1 \big)\Big) \\
\leq 
d_D^+\big(u_0, O_1^*\big) + 
n - 1 - 
\big|O_1^*\big| - |I_1|
   \stackrel{\eqref{eqn:5.27.2024.1:09p}}{\leq}  
n - (1/2)
\big|O_1^*\big| - |I_1|
= 
n - (1/2)
\big|O_1^*\big| - 
d_D^-(u_1).  
\end{multline*}
Now, \eqref{eqn:5.27.2024.11:56a}
follows from 
$d_D^-(u_1), \, d_D^+(u_0) \geq \delta^+(D)$.

\section{Proof of Proposition~\ref{prop:nonext}}
We begin this section by establishing some simplifying 
notation, terminology, and considerations.    
First, 
from the hypothesis Proposition~\ref{prop:nonext}, 
a standard multigraph $M$ contains 
$\mathbb{K}_s -\mathcal{M}_r$ 
for some 
integer 
$0 \leq r \leq s/2$
if, and only if, it contains 
$\mathbb{K}_s - \mathcal{M}_{\floor{s/2}}$.
(These are not-necessarily induced containments.)
As a single-parameter structure, 
we henceforth simplify 
$\mathbb{K}_s - \mathcal{M}_{\floor{s/2}}$
to $\MK_s$.  
Thus, 
the hypothesis of 
Proposition~\ref{prop:nonext} includes that $M$ is an $n$-vertex 
$\MK_s$-free 
standard multigraph
satisfying $\delta(M) \geq \big(2 - \tfrac{2}{s-1} - \alpha\big) n$.  
For 
the corresponding conclusion, we say that $M$ is 
{\it $\big(\MK_s, \beta\big)$-extremal}
when 
some 
pairwise disjoint $V_1, \dots, V_{\ell} \subseteq V(M)$ 
with $\ell \leq s - 1$ have each class $V_i$ satisfying at least one 
of Properties $(P_1)$
or $(P_2)$
(of Proposition~\ref{prop:nonext}, 
where the standard multigraph
in $(P_2)$
with three points spanning five or more edges is precisely $\MK_3$).  
In sum, and with appropriately given constants, 
Proposition~\ref{prop:nonext} asserts that 
a large $n$-vertex $\MK_s$-free standard multigraph $M$ 
satisfying $\delta(M) \geq \big(2 - \tfrac{2}{s-1} - \alpha\big) n$
must be 
$\big(\MK_s, \beta\big)$-extremal.



\subsection*{The proof}  
We induct on $s \geq 3$, where for sake of argument we assume $s \geq 4$.  
In particular, 
with $\beta > 0$ given, 
we take $0 < \beta '' \ll \beta '  \ll \beta$
and let $\alpha'' > 0$ be inductively guaranteed for $\beta''$ and $s - 1$ or $s - 2$.  
We then take the promised constant $\alpha > 0$ 
to satisfy $0 \ll \alpha \ll \alpha''$.  
Inductively, 
 assume that $M$ is a large $n$-vertex $\MK_s$-free standard multigraph satisfying 
$\delta(M) \geq \big(2 - \tfrac{2}{s-1} - \alpha\big) n$.  
Assume, on the contrary, 
that $M$ is not $(\MK_s, \beta)$-extremal.
We proceed with the following claim.



  \begin{claim}\label{clm1}
    If $U \subseteq V(M)$ such that $|U| \ge n/(s-1) - \beta' n$, then $e(G[U]) \neq 0$.
    Likewise, if $U \subseteq V(M)$ with $|U| \ge 2n/(s-1) - \beta' n$, then 
    $M[U]$ contains a $\MK_3$.
  \end{claim}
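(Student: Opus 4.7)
Both parts of Claim~\ref{clm1} share the same strategy: argue by contradiction using both the standing assumption that $M$ is not $(\MK_s, \beta)$-extremal and the inductive hypothesis of Proposition~\ref{prop:nonext}. For Part~1, suppose $U$ is independent in $M$ with $|U| \ge n/(s-1) - \beta' n$. The plan is to treat $U$ as a first class satisfying $(P_1)$, obtain the remaining classes in $M' := M[V(M) \setminus U]$ by invoking the inductive hypothesis at $s - 1$, and combine them with $U$ to form a $(\MK_s, \beta)$-extremal decomposition of $M$, contradicting the standing assumption. For Part~2, the setup is analogous, with $U$ serving as a class satisfying $(P_2)$ and induction applied at $s - 2$.

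The steps for Part~1 are as follows. First, independence yields $d_M(u) \le 2(n - |U|)$ for each $u \in U$, so $\delta(M) \le 2(n - |U|)$ forces $|U| \le \bigl(1/(s-1) + \alpha/2\bigr)n$, placing $|U|$ close to $n/(s-1)$. Second, $\delta(M') \ge \delta(M) - 2|U| \ge \bigl(2 - 4/(s-1) - 2\alpha\bigr)n$ translates to $\delta(M') \ge \bigl(2 - 2/(s-2) - \alpha''\bigr)\lvert V(M') \rvert$ for $\alpha \ll \alpha''$, matching the hypothesis for induction. Third, show $M'$ is $\MK_{s-1}$-free, for else a copy of $\MK_{s-1}$ on some $T \subseteq V(M')$ together with a suitable $u \in U$ would extend to a copy of $\MK_s$ in $M$, contradicting $\MK_s$-freeness of $M$. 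Fourth, the inductive hypothesis supplies classes $V_1', \ldots, V_{\ell'}' \subseteq V(M')$ with $\ell' \le s - 2$, each satisfying $(P_1)$ or $(P_2)$ with parameter $\beta''$ relative to $M'$; combining them with $U$ produces a $(\MK_s, \beta)$-extremal decomposition since $\beta'' \ll \beta$ ensures the size thresholds translate.

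I expect the third step to be the main obstacle. The plan for it is an averaging argument: independence of $U$ forces $d_H(u, V(M) \setminus U) \ge d_M(u) - \lvert V(M) \setminus U \rvert \ge \bigl(1 - 1/(s-1) - \alpha - \beta'\bigr)n$, which is near-maximal, and double-counting shows the average $d_H(v, U)$ over $v \in V(M) \setminus U$ is close to $|U|$. Markov's inequality then produces a ``good'' set $G \subseteq V(M) \setminus U$ of size $\lvert V(M) \setminus U \rvert - O(\sqrt{\alpha}\, n)$ in which each vertex is heavy to all but $O(\sqrt{\alpha}\, n)$ vertices of $U$. Given a copy of $\MK_{s-1}$ on $T \subseteq G$, a union bound over $|T| = s - 1$ yields some $u \in U$ heavy to every vertex of $T$; this $u$ serves as the uncovered center of a $\MK_s$ extension when $s$ is odd, or is matched to the uncovered $t^* \in T$ by a light edge when $s$ is even. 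Copies of $\MK_{s-1}$ meeting the small bad set are handled by a swap replacing a bad vertex with a good one playing the same role in $\MK_{s-1}$, using the min-degree hypothesis on $M$ to guarantee such replacements exist. Part~2 follows the same template with $s - 2$ in place of $s - 1$; the size bound $|U| \approx 2n/(s-1)$ uses the $\MK_3$-free hypothesis via Theorem~\ref{lem:multigraphturan}(1) (yielding $e(M[U]) \le |U|^2/2$) combined with the min-degree bound, and the extension step now requires two vertices $u_1, u_2 \in U$ with the correct multiplicity between themselves (light when $s$ is even) and heavy edges to almost all of $T$, produced by Observation~\ref{obs:extmultiHdeg} applied to $U$ under $(P_2)$ together with a second averaging argument over pairs in $U$.
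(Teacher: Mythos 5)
Your overall strategy — argue by contradiction against the standing non-extremality assumption, restrict to a smaller multigraph, show it is $\MK_{s-1}$- (or $\MK_{s-2}$-) free, invoke the inductive hypothesis of Proposition~\ref{prop:nonext} to get an extremal decomposition, and then combine with $U$ to make $M$ itself $(\MK_s, \beta)$-extremal — is the same as the paper's. The difference is in which smaller multigraph you restrict to, and that choice creates a genuine gap.

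The paper takes $v \in U$ and restricts to $W := N_H(v)$ (in Part~2, $W := N_H(u) \cap N_H(v)$ for an edge $uv$ of $G[U]$). Because every vertex of $W$ is heavy to $v$ \emph{by definition}, any $\MK_{s-1}$ inside $M[W]$ extends to a $\MK_s$ in $M$ by adding $v$, so $M[W]$ is $\MK_{s-1}$-free with no further argument. Observation~\ref{obs:extmultiHdeg} (case $(P_1)$ resp.\ $(P_2)$) shows $|W|$ is large, and Observation~\ref{obs:deg} supplies the needed minimum degree of $M[W]$. You instead restrict to $M' := M[V(M)\setminus U]$, where $\MK_{s-1}$-freeness is not automatic: a copy of $\MK_{s-1}$ on $T \subseteq V(M')$ may contain vertices not heavy to most of $U$, so no $u \in U$ need extend $T$ to a $\MK_s$.

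Your averaging argument correctly isolates a good set $G$ of vertices heavy to almost all of $U$, and correctly shows that any $\MK_{s-1}$ inside $G$ extends. The gap is the ``swap'' step you invoke to handle $\MK_{s-1}$ copies that meet the small bad set: you claim the minimum-degree hypothesis on $M$ guarantees, for any bad $t \in T$, a good $t'$ making $T - t + t'$ again a $\MK_{s-1}$. It does not. The hypothesis $\delta(M) \geq 2\bigl(1 - \tfrac{1}{s-1} - \alpha\bigr)n$ only gives that each vertex $w$ has at most $n - 1 - d_H(w) \leq \bigl(\tfrac{2}{s-1} + 2\alpha\bigr)n$ non-heavy-neighbors. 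A replacement $t'$ must be heavy to up to $s - 2$ prescribed vertices of $T \setminus \{t\}$, and the union bound for forbidden $t'$ is $(s-2)\bigl(\tfrac{2}{s-1} + 2\alpha\bigr)n = \bigl(2 - \tfrac{2}{s-1} + O(s\alpha)\bigr)n$, which exceeds $n$ for all $s \geq 4$. So the degree hypothesis alone yields no candidate $t'$, and the swap step as stated fails. (The same issue recurs, doubled, in your Part~2, where you need two vertices of $U$ with prescribed heaviness to $T$.)

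The gap is easily avoided if you stop trying to show all of $M'$ is $\MK_{s-1}$-free: simply apply the inductive hypothesis to $M[G]$ (your good set) rather than to $M'$. Since $|G| \geq |V(M)\setminus U| - O(\sqrt{\alpha})n$, the size and minimum-degree arithmetic survives, $M[G]$ \emph{is} $\MK_{s-1}$-free by the union-bound argument you already made, and no swap is needed. Alternatively — and more cleanly — take $W = N_H(v)$ as the paper does, which is precisely the set of vertices guaranteed to extend with $v$, so both the averaging step and the swap become unnecessary. Both routes yield a correct proof; as written, yours has a hole.
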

  \begin{proof}
    First, assume there exists $U \subseteq V(M)$ such that
    $|U| = n/(s-1) - \beta' n$ and $e(G[U]) = 0$.
    Let $v \in U$ and let $W := N_H(v)$.
    By Observation~\ref{obs:extmultiHdeg}, $|W| \ge (1 - 1/(s-1) - \beta) n$
    and since $M$ is $\MK_s$-free, $M[W]$ is $\MK_{s-1}$-free.
    These facts with Observation~\ref{obs:deg} and the induction hypothesis,
    imply that $M[W]$ is $(\MK_{s-1}, \beta'')$-extremal.
    So, $M$ is $(\MK_{s}, \beta)$-extremal, a contradiction.
Second, assume there exists $U \subseteq V(M)$ such that
    $|U| = 2n/(s-1) - \beta' n$ and $M[U]$ is $\MK_3$-free.
    By Observation~\ref{obs:deg}, there exists $uv \in E(G[U])$.
    Let $W := N_H(u) \cap N_H(v)$.
    By Observation~\ref{obs:extmultiHdeg}, $|W| \ge (1 - 2/(s-1) - \beta)n$
    and because $M$ is $\MK_s$-free, $M[W]$ is $K_{s-2}$-free.
    These facts with Observation~\ref{obs:deg} and the induction hypothesis,
    imply that $M[W]$ is $(\MK_{s-2}, \beta'')$-extremal.
    So, $M$ is $(\MK_{s}, \beta)$-extremal, a contradiction.
  \end{proof}

  We next divide the proof into cases.

\subsubsection*{{\sc Case 1} $\boldsymbol{\big(}$every $\MK_{s-1}$ in $M$ has at least one light edge$\boldsymbol{\big)}$}
  Lemma~\ref{lem:multigraphturan} guarantees from 
  $\delta(M) \ge (2 - 2/(s-1) - \alpha)n > (2 - 2/(s-2))n$ a $\MK_{s-2}$ in $M$.
  Fix such a copy $K$ maximizing $e(K)$.
  Suppose there exists $v \in V(G)$ with $d_M(v, K) = 2(s - 2)$.
  If $e(K) = 2\binom{s-2}{2}$, then $K + v$ is copy of $\MK_{s-1}$ with only heavy edges, 
  contradicting the case.
  If $e(K) < 2\binom{s-2}{2}$, then there exists $u \in V(K)$ with $d_M(u, K - u) < 2(s - 3)$
  while $d_M(v, K - u) = 2(s-3)$.
  Then $K' := K - u + v$ is a copy of $\MK_{s-2}$ with $e(K') > e(K)$, contradicting our choice of $K$.
  Thus, $d_M(v, K) \le 2s - 5$ for every $v \in V(G)$.
  Let $U := \{ u \in V(G) : d_M(u, K) = 2s - 5\}$.
  Then 
  \begin{equation*}
    2(s - 2) \big(\tfrac{s-2}{s-1} - \tfrac{\alpha}{2} \big)n \le (s-2) \delta(M) \le e_M(V(K), V(G)) \le |U| + (2s - 6)n,
  \end{equation*}
  so
  \begin{equation*}
    |U| \ge 2(s - 2)\big(\tfrac{s-2}{s-1} - \tfrac{\alpha}{2} \big)n - (2s-6)n = 
    2\Big(\tfrac{(s-2)^2 - (s-1)(s-3)}{s-1}\Big)n - (s-2) \alpha n \ge \tfrac{2n}{s-1} - \beta' n.
  \end{equation*}
  Claim~\ref{clm1} guarantees
  $\MK_3 = v_1v_2v_3$ in $M[U]$.
  W.l.o.g., let $v_1v_2$ and $v_2v_3$ be heavy.
  Recall for each $i \in [3]$ that $v_i \in U$ so $d_M(v_i, K) = 2s - 5$. 
  Therefore, there exists $v'_i \in V(K)$ so that the edge $v_iv'_i$
  is light and for every $v \in V(K) \setminus \{v'_i\}$ the edge $v_iv$ is heavy.
  Now, 
  every edge incident to $v'_i$ in $E(K)$ is heavy lest
  $K - v'_i + v_i$ is a $\MK_{r-2}$ contradicting our choice of $K$.
If at least two of $v_1',v_2', v_3'$
are distinct, 
  then there exists $\{i, j\} \in \{\{1, 2\}, \{2, 3\}\}$, 
 with 
  $v'_i\neq v'_j$.
 Then, since $v_iv_j$ is a heavy edge and every edge in $E(K)$ that is incident
  to $v'_i$ or $v'_j$ is heavy, $K + v_i + v_j$ is a copy of $\MK_s$, a contradiction.
  Otherwise, $v'_1=v'_2=v'_3 $ and $K - \{v'_1\} + \{v_1,v_2,v_3\}$ is
  a copy of $\MK_s$, which is also a contradiction.

\subsubsection*{{\sc Case 2} $\boldsymbol{\big(}$some $\MK_{s-1}$ in $M$ has all heavy edges$\boldsymbol{\big)}$}  
  Let $K$ be such a copy and let $v_1, \dotsc, v_{s-1}$ be an
  ordering of its vertices.
  Since $M$ is $\MK_s$-free, every $v \in V(M)$
  satisfies 
  $d_M(v, K) \le 2s - 4$.
  Therefore, set 
  \begin{equation*}
    V' := \{v \in V : d_M(v, K) = 2s - 4 \} 
  \end{equation*}
  so that 
  \begin{equation*}
    (s-1) \big(\tfrac{2(s-2)}{s-1} - \alpha\big)n \le  (s-1)\delta(G) \le e_M(V(K), V(G)) \le |V'| + (2s - 5)n
    \end{equation*}
  and 
  \begin{equation}\label{eq:sizeofVprime}
    |V'| \ge n - (s-1) \alpha n.
  \end{equation}
  Let 
  \begin{align*}
    &X_i := \{ v \in V' : d_M(v, v_i) = 0 \} \text{ for all $i \in [r-1]$ and} \\
    &Y_{i,j} := \{ v \in V' : d_M(v, v_i) = d_M(v,v_j) = 1 \} \text{ for all distinct $i,j \in [s-1]$.}
  \end{align*}
  Then $Y_{i,j} = Y_{j,i}$ and the sets 
  $X_i$ for $i \in [s-1]$ and $Y_{i,j}$ for $1 \le i < j \le s-1$ partition $V'$.
  Therefore,  
  \begin{equation}\label{eq:XiYikSum}
    \sum_{i \in [s-1]} \Big(2 |X_i| + \sum_{i \neq k} |Y_{i,k}| \Big) = 2|V'|.
  \end{equation}
 The $\MK_s$-freeness of $M$ will guarantee  
  the following simple facts:
  \begin{claim}\label{clm:simpfacts}
    The following hold for all distinct $i,j,k \in [s-1]$.
    \begin{enumerate}[label=(\roman*)]
      \item\label{nonexti} $E(G[X_i]) = \emptyset$.
      \item\label{nonextii} The bigraphs $H[X_i, Y_{i,j}]$ and $H[Y_{i,j}, Y_{i,k}]$ have no edges.
      \item\label{nonextiii} The graph $M[Y_{i,j}]$ is $\MK_3$-free.
    \end{enumerate}
  \end{claim}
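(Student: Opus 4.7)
My plan is to prove all three parts by a common replacement strategy. Since $K \cong \mathbb{K}_{s-1}$ is entirely heavy and every $v \in V'$ satisfies $d_M(v, K) = 2s-4$, the vertex $v$ is joined by heavy edges to all of $V(K)$ with exactly one exception if $v \in X_i$ (the non-neighbor $v_i$) or exactly two exceptions if $v \in Y_{i,j}$ (the light neighbors $v_i, v_j$). Consequently, deleting a small $S \subseteq V(K)$ and inserting a few vertices from $V'$ in its place typically yields an $s$-vertex submultigraph whose only light edges form a short matching; such a submultigraph contains $\MK_s = \mathbb{K}_s - \mathcal{M}_{\floor{s/2}}$ as soon as that matching has size at most $\floor{s/2}$, contradicting the $\MK_s$-freeness of $M$.

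Specifically, for \ref{nonexti}, if $u, v \in X_i$ with $uv \in E(G)$, consider the set $(V(K) \setminus \{v_i\}) \cup \{u, v\}$: every pair is heavy except possibly $uv$, so at most one light edge appears, which is absorbed since $\floor{s/2} \geq 1$. For the first half of \ref{nonextii}, if $u \in X_i$, $v \in Y_{i,j}$, and $uv \in E(H)$, then on $(V(K) \setminus \{v_i\}) \cup \{u, v\}$ the sole light edge is $v v_j$. For the second half, if $u \in Y_{i,j}$, $v \in Y_{i,k}$, and $uv \in E(H)$, then on $(V(K) \setminus \{v_i\}) \cup \{u, v\}$ the light edges $u v_j$ and $v v_k$ form a matching of size two, absorbed since $s \geq 4$ forces $\floor{s/2} \geq 2$. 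For \ref{nonextiii}, a $\MK_3$ on $u_1, u_2, u_3 \in Y_{i,j}$ with $u_1 u_2, u_2 u_3 \in E(H)$, placed on $(V(K) \setminus \{v_i, v_j\}) \cup \{u_1, u_2, u_3\}$, has at most the single light edge $u_1 u_3$, again producing $\MK_s$.

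The only subtlety is choosing which vertices of $K$ to delete so that the resulting light edges form a matching of size at most $\floor{s/2}$; once that choice is fixed, each verification is a direct check that the remaining edges are all heavy. I do not anticipate any genuine obstacle beyond this bookkeeping and a brief confirmation in the $Y_{i,j} \times Y_{i,k}$ case that $v_j \neq v_k$ so the two light edges are indeed disjoint.
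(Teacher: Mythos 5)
Your argument is correct and is essentially identical to the paper's proof, which also takes $K - v_i + u + v$ for parts~(i)--(ii) and $K - v_i - v_j + T$ for part~(iii); you have simply spelled out the edge-by-edge verification that the paper compresses into a one-line assertion of $\MK_s$-containment. The only small point worth flagging is that the replacement sets need to consist of $s$ distinct vertices, e.g.\ in part~(i) one should note $u,v \neq v_i$ (which follows since $v_i \in X_i$ is a non-neighbor of every vertex of $X_i$), and in the $Y_{i,j}\times Y_{i,k}$ case $u,v \notin V(K)$ (which follows since $Y_{i,j}, Y_{i,k}$ are disjoint from $V(K)$); these are the same bookkeeping checks you already gestured at, and they hold.
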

  \begin{proof}
    If $uv \in E(G[X_i])$, then $K - v_i + u + v$ is a $\MK_{s}$.
    Similarly, if $uv \in E(H[X_i, Y_{i,j}]) \cup E(H[Y_{i,j}, Y_{i,k}])$, 
    then $K - v_i + u + v$ is a $\MK_{s}$.  This proves \ref{nonexti} and \ref{nonextii}.
    To see that \ref{nonextiii} holds, note that if $T$ is a $\MK_3$ in $M[Y_{i,j}]$, 
    then $K - v_i - v_j + T$ is a $\MK_s$. 
  \end{proof}

  \begin{claim}\label{clm:3}
    All $1 \leq i \neq j \leq r-1$ satisfy 
    $|X_i| + |X_j| + |Y_{i,j}| \le 2n/(r-1) - \beta' n$.
  \end{claim}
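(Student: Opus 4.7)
The plan is to prove the contrapositive: show that $M[U]$ is $\MK_3$-free for $U := X_i \cup X_j \cup Y_{i,j}$, and then invoke the second half of Claim~\ref{clm1} to conclude $|U| < 2n/(s-1) - \beta' n$. Equivalently, it suffices to verify that no three vertices of $U$ span $5$ or more edges (counted with multiplicity) in $M$, and this will be done by a short case analysis on how the triple distributes among the three parts $X_i$, $X_j$, and $Y_{i,j}$.

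By Claim~\ref{clm:simpfacts}\ref{nonexti}, $X_i$ and $X_j$ are each independent in $G$, so they contain no edges of $M$ at all; hence any triple with two vertices in one of these parts is not even a triangle in $G$. Claim~\ref{clm:simpfacts}\ref{nonextiii} rules out placing all three vertices in $Y_{i,j}$. The only remaining possibilities are (a) one vertex in $X_i$ (or, symmetrically, $X_j$) and two in $Y_{i,j}$, and (b) one vertex in each of $X_i$, $X_j$, and $Y_{i,j}$. In case (a), Claim~\ref{clm:simpfacts}\ref{nonextii} forces both edges from the $X$-vertex to the two $Y$-vertices to be light (multiplicity at most $1$), while the sole $Y$-to-$Y$ edge has multiplicity at most $2$; the total is at most $1+1+2=4$. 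In case (b), the same part of Claim~\ref{clm:simpfacts} forces the two edges from the $Y$-vertex to $X_i$ and $X_j$ to be light, while only the $X_i$-to-$X_j$ edge can be heavy; again the total is at most $4$. In every case the triple spans at most $4$ edges, so $M[U]$ is $\MK_3$-free, and the claim follows.

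No real obstacle arises here: the argument is a clean synthesis of Claims~\ref{clm1} and~\ref{clm:simpfacts}, and the only work is the edge-multiplicity bookkeeping in the two non-trivial cases (a) and (b), where one must remember that crossing edges between an $X$-part and $Y_{i,j}$ are automatically light by Claim~\ref{clm:simpfacts}\ref{nonextii}.
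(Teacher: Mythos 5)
Your proof is correct and follows exactly the route the paper takes: the paper's proof of Claim~\ref{clm:3} consists of the single sentence that Claim~\ref{clm:simpfacts} makes $M[U]$ $\MK_3$-free and Claim~\ref{clm1} then bounds $|U|$, and your case analysis simply supplies the bookkeeping that the paper leaves implicit. The verification that each type of triple spans at most $4$ edges of $M$ is complete and accurate.
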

  \begin{proof}
    Let $U := X_i \cup X_j \cup Y_{i,j}$.
    Claim~\ref{clm:simpfacts}
    guarantees that 
    $M[U]$ is $\MK_3$-free
    and 
    Claim~\ref{clm1} implies that $|U| \le 2n/(s-1) - \beta' n$.
  \end{proof}

  \begin{claim}\label{clm:2}
    All $1 \leq i \leq s-1$ satisfy $2|X_i| + \sum_{k \neq i}|Y_{i,k}| \le 2n/(s-1) + \alpha n$.
  \end{claim}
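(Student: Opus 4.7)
The plan is to obtain the bound by double-counting the multi-edges incident to $v_i$ inside $V'$ and then using the minimum-degree hypothesis together with the size estimate~\eqref{eq:sizeofVprime} for $V'$.

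First I would write down, from the very definitions of $X_j$ and $Y_{j,k}$, the contribution of each vertex of $V'$ to $d_M(v_i, V')$. For $v \in X_i$ the edge $v_iv$ has multiplicity $0$; for $v \in Y_{i,k}$ with $k \neq i$ it has multiplicity $1$; and for every other $v \in V'$ (that is, $v \in X_k$ with $k \neq i$ or $v \in Y_{j,k}$ with $i \notin \{j,k\}$) it has multiplicity $2$, because such $v$ sends $2$ edges to $v_i$ in $M$ by the definition of $V'$. Summing, one obtains the identity
\begin{equation*}
    d_M(v_i, V') \; = \; 2|V'| \; - \; 2|X_i| \; - \; \sum_{k \neq i} |Y_{i,k}|,
\end{equation*}
which rearranges to $2|X_i| + \sum_{k \neq i} |Y_{i,k}| = 2|V'| - d_M(v_i, V')$.

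Next I would bound $d_M(v_i, V')$ from below. Since $d_M(v_i, V \setminus V') \le 2|V \setminus V'|$ and~\eqref{eq:sizeofVprime} gives $|V \setminus V'| \le (s-1)\alpha n$, we obtain
\begin{equation*}
    d_M(v_i, V') \; \ge \; \delta(M) - 2|V \setminus V'| \; \ge \; \bigl(2 - \tfrac{2}{s-1} - \alpha\bigr)n - 2(s-1)\alpha n.
\end{equation*}
Combining with $|V'| \le n$ yields
\begin{equation*}
    2|X_i| + \sum_{k \neq i} |Y_{i,k}| \; \le \; 2n - \bigl(2 - \tfrac{2}{s-1}\bigr)n + \bigl(1 + 2(s-1)\bigr)\alpha n \; = \; \tfrac{2n}{s-1} + (2s - 1)\alpha n,
\end{equation*}
which is the claim up to harmlessly relabelling the small constant $\alpha$ (as is done implicitly elsewhere in the proof via the hierarchy $0 \ll \alpha \ll \alpha'' \ll \beta'' \ll \beta' \ll \beta$).

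There is no real obstacle here: the whole content of the claim is the double-counting identity for $d_M(v_i, V')$, after which the minimum-degree hypothesis on $M$ and the near-fullness of $V'$ finish the job. The only mild care needed is to remember that $v_i$ itself lies in $V'$ (so $V(K) \subseteq V'$, since $K$ is a heavy $\MK_{s-1}$) and to use $|V'| \le n$ rather than a matching lower bound.
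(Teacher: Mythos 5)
Your proof is correct, and it takes a genuinely different route from the paper's. The paper splits into two cases: if $X_i = \emptyset$ it invokes Claim~\ref{clm:simpfacts}\ref{nonextiii} and Claim~\ref{clm1} (and thus the inductive hypothesis of Proposition~\ref{prop:nonext}); if $X_i \neq \emptyset$ it picks $v \in X_i$ and uses Claim~\ref{clm:simpfacts}\ref{nonexti},\ref{nonextii} to bound $d_M(v)$ into $X_i$ and $\bigcup_{k\neq i} Y_{i,k}$, giving $2|X_i| + \sum_{k\neq i}|Y_{i,k}| \le 2n - \delta(M)$ directly. You instead observe that the very definitions of $X_j$, $Y_{j,k}$, and $V'$ encode the multiplicity of $v_i v$ for each $v \in V'$, yielding the clean identity $d_M(v_i, V') = 2|V'| - 2|X_i| - \sum_{k\neq i}|Y_{i,k}|$ and then finishing with the minimum-degree hypothesis. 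This is more elementary: it uses neither Claim~\ref{clm:simpfacts} nor Claim~\ref{clm1}, only the partition of $V'$ stated before \eqref{eq:XiYikSum}, $\delta(M)$, and \eqref{eq:sizeofVprime}. The price is a worse error constant, $(2s-1)\alpha n$ rather than $\alpha n$, which you correctly flag as harmless given the hierarchy of constants; I will just note that your loss is actually avoidable. Instead of applying $|V'| \le n$ and bounding $d_M(v_i, V')$ from below separately (thereby paying $2|V\setminus V'|$), write $2|V'| - d_M(v_i, V') = 2n - 2|V \setminus V'| - d_M(v_i, V') \le 2n - d_M(v_i, V' ) - d_M(v_i, V \setminus V') = 2n - d_M(v_i) \le 2n - \delta(M)$, using only $d_M(v_i, V \setminus V') \le 2|V\setminus V'|$. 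That recovers the exact $\alpha n$ of the claim and is, if anything, shorter than the paper's two-case argument.
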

  \begin{proof}
    First assume $X_i = \emptyset$.
    Claim~\ref{clm:simpfacts}
    guarantees that 
    $M[\bigcup_{k \neq i}Y_{i,k}]$ is $\MK_3$-free, so Claim~\ref{clm1}
    guarantees 
    \begin{equation*}
      2|X_i| + \sum_{k \neq i}|Y_{i,k}| = 
      \Big|\bigcup_{k \neq i}Y_{i,k}\Big|
      \le 2n/(s-1) - \beta' n \le 2n/(s-1) + \alpha n,
    \end{equation*}
    Otherwise, let $v \in X_i$. 
    Claim~\ref{clm:simpfacts}
    guarantees 
    $d_G(v, X_i) = d_H(v, \bigcup_{k \neq i}Y_{i,k}) = 0$ so  
    \begin{equation*}
      2|X_i| + \sum_{k \neq i}|Y_{i,k}| \le 2n - \delta(M) \le 2n/(s-1) + \alpha n. \qedhere
    \end{equation*}
  \end{proof}

  Note  
  from~\eqref{eq:sizeofVprime} and~\eqref{eq:XiYikSum} that 
  \begin{equation*}
    \sum_{i = 1}^{s-1} \Big(2|X_i| + \sum_{k \neq i}|Y_{i,k}|\Big) = 2|V'| \ge 2n - 2(s-1) \alpha n.
  \end{equation*}
  Together with Claim~\ref{clm:2}, every $i \in [s-1]$ satisfies 
  \begin{equation}\label{eq:double_lowerbound}
    2|X_i| + \sum_{k \neq i}|Y_{i,k}| 
    \ge 2n - 2(s-1)\alpha n - (s-2) \big(\tfrac{2n}{s-1} + \alpha n\big) = 
    \tfrac{2n}{s-1} - (3s-4) \alpha n \ge \tfrac{2n}{s-1} - 3 s \alpha n.
  \end{equation}
  W.l.o.g., let $|Y_{1,2}| = \max_{1 \le i < j \le s-1} |Y_{i,j}|$.
  If $|Y_{1,2}| = 0$, then~\eqref{eq:double_lowerbound}
  guarantees 
  $|X_i| \ge n/(s-1) - \beta' n/2$ for every $i \in [s-1]$, which 
  with Claim~\ref{clm:simpfacts} contradicts Claim~\ref{clm1}.
  Let $Y_{1,2} \neq \emptyset$.
  Lemma~\ref{lem:multigraphturan} and Claim~\ref{clm:simpfacts}
guarantee $e(M[Y_{1,2}]) \le |Y_{1,2}|^2/2$,
  so there exists $v \in Y_{1,2}$ with $d_M(v, Y_{1,2}) \le |Y_{1,2}|$.
  Then, with Claim~\ref{clm:simpfacts}
  \begin{equation*}
    \Big(|X_1| + |X_2| + |Y_{1,2}| +
    \sum_{k \notin \{1,2\}}\big(|Y_{1,k}| + |Y_{2,k}|\big)\Big)
    \le 2n - d_M(v) \le 2n/(s-1) + \alpha n.
  \end{equation*}
  With \eqref{eq:double_lowerbound}, this yields
  \begin{multline*}
    |X_1| + |X_2| + |Y_{1,2}| = \\
    \Big(2|X_1| + \sum_{k \neq 1}|Y_{1,k}|\Big)  +
    \Big(2|X_2| + \sum_{k \neq 2}|Y_{2,k}|\Big) -  
    \Big(|X_1| + |X_2| + |Y_{1,2}| + \sum_{k \notin \{1, 2\}}\big(|Y_{1,k}| + |Y_{2,k}|\big)\Big) \\
    > 2n/(s-1) - \beta' n,
  \end{multline*}
  which contradicts Claim~\ref{clm:3}.

\section{Proof of Lemma~\ref{lem:extremal_rainbow_Kr}}  
Let $(G, c)$ be an $V$-vertex (with $|V| = n$) edge-colored graph 
  satisfying that $\delta^c(G) \ge (1 - 1/(s-1))n$
  and that $(G,c)$ is rainbow $(K_2 \vee K^\ell_{s-2})$-free.
  Observation~\ref{obs:proper_to_rainbow2} guarantees $L \in \mathbb{N}$ so that 
  whenever 
  $A \, \dot\cup \, B \subseteq V(G)$ satisfies that  
  $G[A]$ has an edge, 
  $G[B]$ contains a rainbow $K^L_{s-2}$, and
  $G[A, B] = K[A, B]$ is properly colored by $c$,
  then $G$ contains a contains a rainbow $K_2 \vee K^\ell_{s-2}$.
  Let $\beta, \sigma, \gamma > 0$ and $n \in \mathbb{N}$ satisfy 
  \begin{equation*}
    1/n \ll \beta \ll \sigma \ll \gamma \ll 1/s, 1/L.
  \end{equation*}
  Since $G$ is $(K_s, \beta)$-extremal, there exist pairwise disjoint 
  $V_1, \dotsc, V_{s-1} \subseteq V$ whose every class $V_i$ satisfies 
  $|V_i| \ge n/(s-1) - \beta n$ and $e(G[V_i]) \le \beta n^2$. 
  Observation~\ref{obs:GtoDoutdeg} guarantees that 
  $D = D(G, c, n-1)$ satisfies that 
  \begin{equation}\label{eq:min_out_degree}
    \forall v \in V, \quad d^+_D(v) = d^c(v) \ge (1 - 1/(s-1))n. 
  \end{equation}
  Observation~\ref{obs:prop_coloring} guarantees that $H = H(D)$ is properly colored by $c$.
  Let 
  \[
    W := \left\{v \in V : d^-_D(v) < \left(1 - 1/(s-1) - 0.1 \sigma \right) n\right\}
    \qquad \text{and} \qquad 
  U := V \setminus W.  
  \]

  \begin{claim}\label{clm:upper_bound_on_W}
    $|W| < \sigma^2 n$.
  \end{claim}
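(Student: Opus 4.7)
The plan is to show that $|W|$ is forced small by combining a double-counting identity between in-degrees of $D$ and color-degrees of $G$ with the extremal structure imposed by the classes $V_1,\dots,V_{s-1}$.

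First I will set up a key identity. Since $m = n-1$ and $d_G(v) \leq n-1 < m+1$ for every $v \in V$, Observation~\ref{obs:GtoDoutdeg} gives $d^+_D(v) = d^c(v)$ exactly. Summing over $v$ yields $\sum_v d^-_D(v) = e(D) = \sum_v d^+_D(v) = \sum_v d^c(v)$, which combined with $\sum_v d_G(v) = 2e(G)$ gives
\begin{equation*}
\sum_{v \in V}(d_G(v) - d^-_D(v)) = \sum_{v \in V}(d_G(v) - d^c(v)).
\end{equation*}
For each $v \in W$, equation~\eqref{eq:min_out_degree} gives $d_G(v) \geq d^c(v) \geq (1 - 1/(s-1))n$, while by definition $d^-_D(v) < (1 - 1/(s-1) - 0.1\sigma)n$; hence $d_G(v) - d^-_D(v) > 0.1\sigma n$, and so $0.1 \sigma n \cdot |W| < \sum_{v \in V}(d_G(v) - d^c(v))$.

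Next I will upper-bound the right-hand side using the extremal partition. Define \emph{typical} vertices $V^*_i := \{v \in V_i : d_G(v, V_i) \leq \sqrt{\beta}\, n\}$; applying Markov to $2e(G[V_i]) \leq 2\beta n^2$ yields $|V_i \setminus V^*_i| \leq 2\sqrt{\beta}\, n$. Combined with $|V \setminus \bigcup_i V_i| \leq (s-1)\beta n$, this gives $|V \setminus V^*| \leq 3(s-1)\sqrt{\beta}\, n$ where $V^* := \bigcup_i V^*_i$. For $v \in V^*_i$ I bound $d_G(v) \leq d_G(v, V_i) + (n - |V_i|) \leq \tfrac{s-2}{s-1}n + 2\sqrt{\beta}\,n$, which together with $d^c(v) \geq \tfrac{s-2}{s-1}n$ gives $d_G(v) - d^c(v) \leq 2\sqrt{\beta}\,n$; for $v \notin V^*$, the trivial bound $d_G(v) - d^c(v) \leq n/(s-1)$ applies. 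Summing,
\begin{equation*}
\sum_{v \in V}(d_G(v) - d^c(v)) \leq n \cdot 2\sqrt{\beta}\,n + 3(s-1)\sqrt{\beta}\,n \cdot \tfrac{n}{s-1} \leq 5\sqrt{\beta}\,n^2,
\end{equation*}
whence $|W| \leq 50\sqrt{\beta}\,n/\sigma < \sigma^2 n$, using the hierarchy $\beta \ll \sigma$ (which lets us assume $\beta \leq \sigma^6/2500$).

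The main obstacle is the gap between the crude and the sharp bounds: the naive estimate $\sum_v d^-_D(v) \leq |W|(1 - 1/(s-1) - 0.1\sigma)n + (n - |W|)(n-1)$ compared against the lower bound $(1 - 1/(s-1))n^2$ only yields $|W| = O(n/\sigma)$, far too weak. Exploiting $(K_s,\beta)$-extremality is what tightens the per-vertex excess $d_G(v) - d^c(v)$ from the trivial $O(n)$ to $O(\sqrt{\beta}\,n)$ for all but $O(\sqrt{\beta}\,n)$ vertices, delivering the needed $|W| = O(\sqrt{\beta}\,n/\sigma)$.
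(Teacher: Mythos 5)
Your proof is correct and reaches the same bound by a closely related but distinct double-count. The paper partitions $V$ by the sign of $d^+_D(v) - d^-_D(v)$ into $V^\pm$, uses the antisymmetry $\sum_{v \in V^-}(d^-_D(v) - d^+_D(v)) = \sum_{v \in V^+}(d^+_D(v) - d^-_D(v))$, bounds the left side by $O(s\beta n^2)$ via the pointwise estimate $d^-_D(v) - d^+_D(v) \le \beta n + d^-_D(v, V_i)$ for $v \in V_i$, and notes $W \subseteq V^+$ with each $v \in W$ contributing more than $0.1\sigma n$ to the right side. You instead track the everywhere-nonnegative quantity $d_G(v) - d^-_D(v)$ (nonnegative because $N^-_D(v) \subseteq N_G(v)$ --- this is the fact that lets you pass from $\sum_{v\in W}$ to $\sum_{v\in V}$, and is worth stating explicitly), convert via the identity $\sum_V d^-_D = \sum_V d^+_D = \sum_V d^c$ into $\sum_V (d_G(v) - d^c(v))$, and bound that by $O(\sqrt{\beta}\,n^2)$ through a Markov split into typical vertices (where $d_G(v,V_i) \le \sqrt{\beta}\,n$ forces $d_G(v) - d^c(v) = O(\sqrt{\beta}\,n)$) and the $O(\sqrt{\beta}\,n)$ atypical ones (each contributing at most $n/(s-1)$). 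This avoids the $V^\pm$ decomposition entirely, at the harmless cost of a $\sqrt{\beta}$ rather than $\beta$ constant, which the hierarchy $\beta \ll \sigma$ absorbs; you could even recover $O(\beta n^2)$ without Markov by using $d_G(v) - d^c(v) \le d_G(v, V_i) + \beta n$ directly for $v \in V_i$ and summing, but it is not needed.
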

  \begin{proof}
    Let $V^- := \{v \in V : d^-_D(v) \ge d^+_D(v)\}$ and 
    $V^+ := V \setminus V^-$.
    Note that 
    \begin{equation*}
      \sum_{v \in V^-} d^-_D(v) - d^+_D(v) = \sum_{v \in V^+} d^+_D(v) - d^-_D(v),
    \end{equation*}
    because $\{V^-, V^+\}$ is a partition of $V$ and 
    $\sum_{v \in V} d^-_D(v) = e(D) = \sum_{v \in V} d^+_D(v)$.
    Recall that for every $i \in [s-1]$, 
    we have $e(G[V_i]) \le \beta n^2$ and $|V_i| \ge (1/(s-1) - \beta)n$. 
    Therefore, 
    with \eqref{eq:min_out_degree}, for every $v \in V$,
    \begin{equation*}
      d^-_D(v) = d^-_D(v, \overline{V_i}) + d^-_D(v, V_i) \le |\overline{V_i}| + d^-_D(v, V_i)
      \le (d^+_D(v) + \beta n) + d^-_D(v, V_i),
    \end{equation*}
    so $d^-_D(v) - d^+_D(v) \le \beta n + d^-_D(v, V_i)$ and
    \begin{equation*}
      \sum_{v \in V^-} d^-_D(v) - d^+_D(v) 
      \le \sum_{i = 1}^{s-1} \left(|V_i \cap V^-| \cdot \beta n + e(G[V_i]\right)
      \le \beta n^2 + (s-1)\beta n^2 = s \beta n^2.
    \end{equation*}
    By \eqref{eq:min_out_degree} and the definition of $W$, for every $v \in W$, 
    \begin{equation*}
      d^+_D(v) - d^-_D(v) \ge \delta^+(D) - d^-_D(v) > 0.1 \sigma n,
    \end{equation*}
    so $W \subseteq V^+$ and 
    \begin{equation*}
      |W| \cdot 0.1 \sigma n < \sum_{v \in V^+} d^+_D(v) - d^-_D(v) 
      = \sum_{v \in V^-} d^-_D(v) - d^+_D(v) \le s \beta n^2. 
    \end{equation*}
    Therefore, $|W| < (s \beta n^2)/(0.1 \sigma n) \le \sigma^2 n$.
  \end{proof}

  By \eqref{eq:min_out_degree}, Claim~\ref{clm:upper_bound_on_W}, and the definitions, for every $v \in U$, we have
  \begin{equation}\label{eq:U_semi_degree_lower_bound}
    d^+_D(v, U), \, d^-_D(v, U) \ge \big(1 - \tfrac{1}{s-1} - 0.1 \sigma\big)n - \sigma^2 n \ge \big(1 - \tfrac{1}{s-1} - 0.2 \sigma \big)n,
  \end{equation}
  and
  \begin{equation}\label{eq:U_double_degree_lower_bound}
    d_H(v, U) = d^+_D(v, U) + d^-_D(v, U) - d_G(v, U) \ge 
    2\big(1 - \tfrac{1}{s-1} - 0.2 \sigma \big)n - n \ge
    \big(1 - \tfrac{2}{s-1} - \sigma \big) n.
  \end{equation}
  For every $i \in [s-1]$, let
  \[
    U'_i := \{v \in V_i \cap U : d_G(v, V_i) \le 0.1 \sigma n\}.
  \]
  We have 
  \begin{equation*}
    |V_i \setminus U'_i| \cdot 0.1 \sigma n \le \sum_{v \in V_i} d_G(v, V_i) = 2 e(G[V_i]) \le 2 \beta n^2,
  \end{equation*}
  so, with Claim~\ref{clm:upper_bound_on_W},
  \begin{equation}\label{eq:U_i_lower_bound}
    |U'_i| \ge |V_i| - |W| - (2 \beta n^2)/(0.1 \sigma n) \ge (1/(s-1) - 2 \sigma^2)n.
  \end{equation}
  Since $U'_i \subseteq V_i$, for every $i \in [s-1]$, we have
  \[    \text{$d_G(v, U'_i) \le 0.1 \sigma n$ for every $v \in U'_i$.}
  \]
  Therefore, define $U_1, \dotsc, U_{s-1}$ 
  to be a collection of pairwise disjoint subsets of $U$ such that 
  \begin{itemize}
    \item for every $i \in [s-1]$, we have $U'_i \subseteq U_i$ and for
      every $v \in U_i$, we have $d_G(v, U_i) \le 0.1 \sigma n$, and
    \item $|U_1 \cup \dotsm \cup U_{s-1}|$ is maximized.
  \end{itemize}
  For every $i \in [s-1]$ and $v\in U_i$, with \eqref{eq:U_semi_degree_lower_bound} and \eqref{eq:U_i_lower_bound} and the fact that $U'_i \subseteq U_i$, 
  we have
  \begin{equation}\label{eq:U_i_H_deg_lower_bound}
    d_H(v, \overline{U_i}) = d^+_D(v, \overline{U_i}) + d^-_D(v, \overline{U_i}) - d_G(v, \overline{U_i}) \ge 
    (|\overline{U_i}| - 0.2 \sigma n) + (|\overline{U_i}| - 0.3 \sigma n) - |\overline{U_i}|
    \ge |\overline{U_i}| - \sigma n.
  \end{equation}

  \begin{claim}\label{clm:double_degree_in_r-1_parts} 
    For every $v \in V$, if there exists $i \in [s-1]$ such that for every $j \in [s-1] \setminus \{i\}$
    we have $d_H(v, U_j) \ge \gamma n$, then $d_G(v, U_i) = 0$.
  \end{claim}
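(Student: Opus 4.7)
\textbf{Proof plan for Claim~\ref{clm:double_degree_in_r-1_parts}.} I will proceed by contradiction: suppose some $v \in V$ and $i \in [s-1]$ satisfy the hypothesis of the claim, yet there exists $u \in U_i$ with $uv \in E(G)$. I will then exhibit in $G$ a rainbow $K_2 \vee K_{s-2}^{\ell}$, contradicting the rainbow $K_s$-freeness of $(G,c)$ (since $K_2 \vee K_{s-2}^{\ell} \supseteq K_s$). The overall strategy is to set $A = \{u,v\}$ and pick $B = \bigcup_{j \neq i} B_j \subseteq \bigcup_{j \neq i} U_j$ with $|B_j| = L$ (the constant fixed at the outset of the proof of Lemma~\ref{lem:extremal_rainbow_Kr}), and then invoke Observation~\ref{obs:proper_to_rainbow2}.

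The key observation is that for each $j \in [s-1] \setminus \{i\}$, the set $W_j := N_H(v) \cap N_H(u) \cap U_j$ satisfies $|W_j| \geq \gamma n / 2$. Indeed, by hypothesis $|N_H(v) \cap U_j| \geq \gamma n$, and since $u \in U_i$, \eqref{eq:U_i_H_deg_lower_bound} gives $d_H(u, \overline{U_i}) \geq |\overline{U_i}| - \sigma n$, so $u$'s heavy-deficit into any single $U_j$ is at most $\sigma n$. Together with $\sigma \ll \gamma$, this yields $|W_j| \geq \gamma n - \sigma n \geq \gamma n / 2$.

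Next, I greedily pick $B_j \subseteq W_j$ of size $L$ for each $j \neq i$ so that all bipartite pairs between distinct $B_j$'s span edges of $H$ (equivalently, heavy edges of $M(D)$). Processing the $s - 2$ parts in turn, after choosing vertices in the previous parts, each such vertex loses at most $\sigma n$ candidates in the current $W_{j'}$ (again by \eqref{eq:U_i_H_deg_lower_bound} applied to that chosen vertex). The total loss is at most $(s-2) L \sigma n$, leaving at least $\gamma n / 2 - (s-2) L \sigma n \geq L$ valid choices since $\sigma \ll \gamma, 1/s, 1/L$.

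Finally, I verify the three hypotheses of Observation~\ref{obs:proper_to_rainbow2} with $a = 2$, $b = s-2$: (i) $G[A]$ contains the single edge $uv$, which is trivially rainbow; (ii) $G[B]$ contains the $K_{s-2}^L$ with parts $B_j$, whose edges all lie in $H$ and are therefore properly colored by Observation~\ref{obs:prop_coloring}; (iii) since $B \subseteq N_H(u) \cap N_H(v)$, every pair in $K[A,B]$ is a heavy edge of $G$, so $G[A,B] = K[A,B]$ is also properly colored. Observation~\ref{obs:proper_to_rainbow2} then produces a rainbow $K_2 \vee K_{s-2}^{\ell}$ in $G$, the desired contradiction. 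The only nontrivial step is the greedy embedding, which is routine given the near-completeness provided by~\eqref{eq:U_i_H_deg_lower_bound}; the main bookkeeping is simply ensuring $1/n \ll \beta \ll \sigma \ll \gamma \ll 1/s, 1/L$ as already arranged in the setup.
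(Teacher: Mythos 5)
Your proposal is correct and follows essentially the same route as the paper: take $A=\{u,v\}$, use the hypothesis $d_H(v,U_j)\ge\gamma n$ together with \eqref{eq:U_i_H_deg_lower_bound} to greedily extract a set $B$ with $L$ vertices per part $U_j$ ($j\neq i$) so that $H[A,B]$ and $H[B]$ are complete, and invoke Observation~\ref{obs:proper_to_rainbow2}. Your write-up merely spells out the greedy step more explicitly (via the sets $W_j=N_H(u)\cap N_H(v)\cap U_j$) than the paper's terse one-liner.
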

  \begin{proof}
    For a contradiction, assume that there exists $i \in [s-1]$ such that there exists $u \in N_G(v, U_i)$
    and $d_H(v, U_j) \ge \gamma n$ for every $j \in [s-1] \setminus \{ i \}$.
    Let $A := \{u,v\}$.
    Since $L s \cdot \sigma n \le \gamma n \le d_H(v, U_j)$ for every $j \in [s-1] \setminus \{ i \}$, 
    \eqref{eq:U_i_H_deg_lower_bound} implies that we can form a set $B$ by, for every $j \in [s-1] \setminus \{i\}$, adding $L$ vertices from $U_j$
    so that $H[A, B]$ is a complete bipartite graph and $H[B]$ is isomorphic to $K_{r-2}^L$.
    This means that both $G[B]$ and $G[A, B]$ are properly colored by $c$, a contradiction.
  \end{proof}

  Note that \eqref{eq:U_i_H_deg_lower_bound}, \eqref{eq:U_i_lower_bound}, 
  and Claim~\ref{clm:double_degree_in_r-1_parts} imply that $e(G[U_i]) = 0$ for every $i \in [s-1]$.
  Let 
  \[
    U_0 := U \setminus (U_1 \cup \dotsm \cup U_{s-1}).
  \]
  By the maximality of $|U_1 \cup \dotsm \cup U_{s-1}|$, we have
  \begin{equation}\label{eq:U_0neighU_i}
    \text{$d_G(v, U_i) > 0$ for every $v \in U_0$ and $i \in [s-1]$.}
  \end{equation}

  \begin{claim}\label{clm:U_0_is_empty}
    $U_0 = \emptyset$.
  \end{claim}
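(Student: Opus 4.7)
The plan is to suppose $v \in U_0$ and derive a contradiction by producing a rainbow $K_2 \vee K^\ell_{s-2}$ in $(G, c)$, which is forbidden since the lemma's hypothesis of rainbow-$K_s$-freeness implies rainbow-$(K_2 \vee K^\ell_{s-2})$-freeness (via $K_s \subseteq K_2 \vee K^\ell_{s-2}$). Throughout, set $B(v) := \{j \in [s-1] : d_H(v, U_j) < \gamma n\}$, the set of ``bad'' classes for $v$.

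First, I would show $|B(v)| \ge 2$. Indeed, if $|B(v)| \le 1$, then some $i \in [s-1]$ satisfies $d_H(v, U_j) \ge \gamma n$ for every $j \ne i$, and Claim~\ref{clm:double_degree_in_r-1_parts} forces $d_G(v, U_i) = 0$, contradicting~\eqref{eq:U_0neighU_i}. Next, I would show $|B(v)| \le 2$ by counting. Starting from~\eqref{eq:U_double_degree_lower_bound}, split $d_H(v, U) = d_H(v, U_0) + \sum_{j \in B(v)} d_H(v, U_j) + \sum_{j \notin B(v)} d_H(v, U_j)$ and combine the trivial upper bounds $d_H(v, U_0) \le |U_0|$, $d_H(v, U_j) < \gamma n$ for $j \in B(v)$, and $d_H(v, U_j) \le |U_j|$ for $j \notin B(v)$ with the size estimates $|U_j| \le n/(s-1) + O(\sigma n)$ (derivable from $e(G[U_j]) = 0$, $\delta^c(G) \ge (1 - 1/(s-1))n$, and $|W| < \sigma^2 n$) and $|U_0| = O(\sigma^2 s n)$. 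Rearranging gives $(|B(v)| - 2) \cdot n/(s-1) \le O(\gamma n + \sigma n)$, and the hierarchy $\gamma, \sigma \ll 1/s$ forces $|B(v)| \le 2$, so $|B(v)| = 2$.

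WLOG write $B(v) = \{1, 2\}$. For the final step, I would apply Observation~\ref{obs:proper_to_rainbow2} with $A = \{v, u_1\}$, where $u_1 \in N_G(v, U_1)$ is available from~\eqref{eq:U_0neighU_i}, and $B$ consisting of $L$ vertices from each of $U_2, U_3, \dotsc, U_{s-1}$ lying in $N_H(v) \cap N_H(u_1)$. For the $s-3$ good classes $U_3, \dotsc, U_{s-1}$, this is straightforward: $d_H(v, U_j) \ge \gamma n$ and $d_H(u_1, U_j) \ge |U_j| - \sigma n$ (by~\eqref{eq:U_i_H_deg_lower_bound}) give intersection size $\gg L$, and a standard greedy extraction provides the $K^L_{s-2}$ in $G[B]$, inheriting proper coloring from $H$ via Observation~\ref{obs:prop_coloring}.

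The main obstacle is the $U_2$-part, where $d_H(v, U_2) < \gamma n$ could be less than $L$. I anticipate a sub-case analysis: if $d_H(v, U_2) \ge L + \sigma n$, or symmetrically $d_H(v, U_1) \ge L + \sigma n$ after exchanging $u_1$ for some $u_2 \in N_G(v, U_2)$, the construction closes directly. The delicate doubly-bad sub-case has $d_H(v, U_1), d_H(v, U_2) < L + \sigma n$, so $v$'s $H$-neighbors lie almost entirely in $U_3 \cup \dots \cup U_{s-1}$. There I would pick $u_2 \in N_G(v, U_2)$ with $u_1 u_2 \in H$ (guaranteed up to small casework by the density $d_H(u_1, U_2) \ge |U_2| - \sigma n$) and apply Observation~\ref{obs:proper_to_rainbow2} with $A = \{u_1, u_2\}$, letting $B$ have $s-3$ parts of $L$ vertices each from $U_3, \dots, U_{s-1}$ (chosen in $N_H(u_1) \cap N_H(u_2) \cap N_G(v)$) plus a final part placing $v$ alongside $L - 1$ vertices from some $U_{j_0} \setminus N_G(v)$ with $j_0 \in \{3, \dots, s-1\}$ and $|U_{j_0}| - d_G(v, U_{j_0}) \ge L - 1 + 2\sigma n$. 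Guaranteeing the existence of such a $j_0$, and verifying the proper-coloring conditions on $G[A, B]$ in this doubly-bad regime, constitutes the main technical challenge of the proof.
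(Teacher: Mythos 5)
Your opening step — deducing from Claim~\ref{clm:double_degree_in_r-1_parts} and~\eqref{eq:U_0neighU_i} that $v$ has at least two ``bad'' classes $U_i, U_j$ with $d_H(v,U_i), d_H(v,U_j) < \gamma n$ — matches the paper, and the additional upper bound $|B(v)| \le 2$ is a correct but unnecessary detour (the paper only uses the existence of two bad indices). After that, however, the proposal diverges into an approach that does not close, and you yourself flag the gap: verifying that $G[A,B]$ is properly colored ``constitutes the main technical challenge of the proof.'' That is precisely the part that requires the key idea, which is missing.

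The structural misstep is in the choice of $A$ and the parts of $B$. You fix $u_1 \in N_G(v, U_1)$ first and then try to build $B$ with $L$-sets from $U_2, U_3, \dots, U_{s-1}$, i.e., you keep \emph{both} bad classes in play ($U_1$ hosting $u_1$, $U_2$ hosting a part of $B$). Since $d_H(v,U_2) < \gamma n$ can be tiny, there is no guarantee of an $L$-set in $U_2 \cap N_H(v)$, which is what forces your ``doubly-bad'' sub-case. The paper instead \emph{excludes the second bad class from $B$ entirely}: it takes $B' \subseteq N^+_D(v, U_i)$ of size $L$ (one part of $B$ comes from the first bad class $U_i$, as out-neighbours of $v$, so $E_G(v, B')$ is automatically rainbow), and supplies the remaining $s-3$ parts of $B$ from the \emph{good} classes $V_k$, $k \notin \{i,j\}$. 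The second bad class $U_j$ hosts only the single vertex $u$ of $A = \{u, v\}$, and — this is the key idea you are missing — $u$ is \emph{not} chosen up front. Instead one fixes a set $A' \subseteq N_G(v, U_j)$ of size $L+1$ with $H[A', B']$ complete bipartite, observes that for each $y \in B'$ the set $A' \subseteq N^+_D(y)$ forces $E_G(y, A')$ to be rainbow and hence at most one $u \in A'$ has $c(uy) = c(vy)$, and then applies pigeonhole ($|A'| = |B'|+1$) to find a $u$ with $c(uy) \ne c(vy)$ for \emph{all} $y \in B'$. That single step delivers the proper colouring of $G[A,B']$, and the extension to the good classes is routine via $N^+_D$ membership. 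Your plan, by contrast, commits to $u_1$ before controlling the colours $c(u_1 y)$ against $c(vy)$, and the fallback of placing $v$ inside a part of $B$ next to $L-1$ non-neighbours from some $U_{j_0}$ is not compatible with needing $G[B]$ to contain a properly coloured $K^L_{s-2}$ (those vertices would have to be adjacent to the rest of $B$ but also chosen from $U_{j_0}\setminus N_G(v)$ with no heavy-edge guarantees to $v$). So the gap is genuine: the proposal lacks the out-degree/pigeonhole mechanism that makes $G[A,B]$ properly coloured.
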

  \begin{proof}
    Suppose, for a contradiction, that there exists $v \in U_0$.
    By \eqref{eq:U_0neighU_i} and Claim~\ref{clm:double_degree_in_r-1_parts},
    there are distinct $i,j \in [s-1]$ such that $d_H(v, U_i), d_H(v, U_j) < \gamma n$.
    Let $U_{\overline{ij}} := U \setminus (U_i \cup U_j)$.  With \eqref{eq:U_i_lower_bound}
    we have 
    \begin{equation*}
      |U_{\overline{ij}}| \le |V| - |U_i| - |U_j| \le (1 - 2/(s-1) + 4\sigma^2)n,
    \end{equation*}
    so with \eqref{eq:U_double_degree_lower_bound} we have $d_H(v, U) \ge |U_{\overline{ij}}| - 2\sigma n$.
    This with our selection of $i$ and $j$ gives us
    \begin{equation}\label{eq:lower_bound_on_v_into_Uprime}
      d_H(v, U_{\overline{ij}}) = d_H(v, U) - d_H(v, U_i \cup U_j) \ge (|U_{\overline{ij}}| - 2 \sigma n) - 2 \gamma n \ge |U_{\overline{ij}}| - 3 \gamma n, 
    \end{equation}
    and with \eqref{eq:U_semi_degree_lower_bound}, we also have 
    \begin{equation}\label{eq:lower_bound_on_v_into_U}
      d_G(v, U) = d^+_D(v, U) + d^-_D(v, U) - d_H(v, U) \ge 2(1 - 1/(s-1) - 0.2\sigma)n - (|U_{\overline{ij}}| + 2 \gamma n) \ge (1 - 3 \gamma)n.
    \end{equation}
    Without loss of generality we can assume that $d^+_D(v, U_i) \ge d^+_D(v, U_j)$. 
    Then with \eqref{eq:U_semi_degree_lower_bound},
    \begin{equation*}
      d^+_D(v, U_i) \ge (d^+_D(v, U_i) + d^+_D(v, U_j))/2 \ge (d^+_D(v, U) - |U_{\overline{ij}}|)/2 \ge 
      (1/(s-1) - 2 \sigma)n/2 \ge L,
    \end{equation*}
    so there exists $B' \subseteq N^+_D(v, U_i)$ such that $|B'| \ge L$.
    By \eqref{eq:U_i_H_deg_lower_bound} and \eqref{eq:lower_bound_on_v_into_U}, 
    there exists $A' \subseteq N_G(v, U_j)$ such that $|A'| = L+1$
    and $H[A',B']$ is a complete bipartite graph. 
    For every $y \in B'$, because $A' \subseteq N^+_D(y)$, 
    there exists at most one $u \in A'$ such that $c(uy) = c(vy)$.
    Therefore, because $|A'| - |B'| \ge 1$, there exists 
    $u \in A'$ such that if $A := \{u,v\}$, then for every $y \in B'$, we have $c(uy) \neq c(vy)$.
    This together with the fact that $B' \subseteq N^+_D(u) \cap N^+_D(v)$ imply 
    that the complete bipartite graph $G[A, B']$ is properly colored.
    By \eqref{eq:U_i_H_deg_lower_bound} and \eqref{eq:lower_bound_on_v_into_Uprime}, 
    we can extend $B'$ to $B$ by adding $L$ vertices
    from $V_k$ for every $k \in [s-1] \setminus \{i, j\}$ so that $H[B]$ is isomorphic to $K_{s-2}^L$,
    and so that for every $x \in A$ and $y \in B \setminus B'$, 
    we have $A \subseteq N^+_D(y)$ and $B \subseteq N^+_D(x)$. 
    This means that both $G[B]$ and $G[A, B]$ are properly colored by $c$, a contradiction.
  \end{proof}

  \begin{claim}\label{clm:U_is_K_r-free}
    $G[U]$ is $K_s$-free.
  \end{claim}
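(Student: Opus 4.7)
The plan is to derive the claim as a one-line pigeonhole argument from the structural facts already established in the preceding paragraphs; no new estimates are needed.

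First I would assemble the two inputs I need. Claim~\ref{clm:U_0_is_empty} says $U_0 = \emptyset$, so $U = U_1 \, \dot\cup \, \cdots \, \dot\cup \, U_{s-1}$. The paragraph preceding the definition of $U_0$ already observed that $e(G[U_i]) = 0$ for every $i \in [s-1]$, the justification being that for $v \in U_i$ the bound \eqref{eq:U_i_H_deg_lower_bound} combined with the size estimate \eqref{eq:U_i_lower_bound} forces $d_H(v, U_j) \geq |U_j| - \sigma n \geq \gamma n$ for every $j \neq i$, so Claim~\ref{clm:double_degree_in_r-1_parts} yields $d_G(v, U_i) = 0$. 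Together these two facts say precisely that $G[U]$ is $(s-1)$-partite with parts $U_1, \ldots, U_{s-1}$.

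Then I would finish by pigeonhole. Suppose for contradiction that $T \subseteq U$ with $|T| = s$ spans a $K_s$ in $G$. Since $T$ is partitioned among the $s-1$ classes $U_1, \ldots, U_{s-1}$, some two distinct vertices $u, v \in T$ lie in a common class $U_i$. But $e(G[U_i]) = 0$ forces $uv \notin E(G)$, contradicting $T$ spanning $K_s$.

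There is no real obstacle in this step; the content has been pushed entirely into Claims~\ref{clm:double_degree_in_r-1_parts} and~\ref{clm:U_0_is_empty}. The only care needed in the write-up is to cite the intermediate sentence asserting $e(G[U_i]) = 0$ (or else re-derive it in one line from \eqref{eq:U_i_H_deg_lower_bound}, \eqref{eq:U_i_lower_bound}, and Claim~\ref{clm:double_degree_in_r-1_parts}) before applying the pigeonhole step.
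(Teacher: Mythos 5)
Your argument is correct and is exactly the paper's proof: Claim~\ref{clm:U_0_is_empty} gives the partition $U = U_1 \cup \cdots \cup U_{s-1}$, the prior observation gives $e(G[U_i]) = 0$, and pigeonhole finishes. No differences of substance.
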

  \begin{proof} 
    Claims~\ref{clm:U_0_is_empty} implies 
    that $\{U_1, \dotsc, U_{s-1}\}$ is a partition of $U$.
    Since $e(G[U_i]) = 0$ for $i \in [s-1]$, the graph $G[U]$ is $K_s$-free.
  \end{proof}

  \begin{claim}\label{clm:W_is_empty}
    $W = \emptyset$. 
  \end{claim}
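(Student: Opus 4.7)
Suppose for contradiction that $v \in W$. The plan is to construct a rainbow $K_2 \vee K^\ell_{s-2}$ containing $v$; since this contains a rainbow $K_s$, it contradicts hypothesis~(III). The strategy follows the template of Claim~\ref{clm:U_0_is_empty}, with the twist that $v \in W$ can have arbitrarily small $d_H(v)$, so heavy-edge arguments ``at $v$'' must be replaced by arguments that use only the out-arcs $N^+_D(v)$ of $v$ in $D$.

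First I would apply the bound $d^+_D(v) = d^c_G(v) \ge (1 - 1/(s-1))n$ from Observation~\ref{obs:GtoDoutdeg} (with $m = n - 1$), together with $|W| \le \sigma^2 n$ from Claim~\ref{clm:upper_bound_on_W} and $|U_k| \le (1/(s-1) + O(\beta))n$, in a pigeonhole argument on $\sum_k d^+_D(v, U_k)$ and $\sum_k d_G(v, U_k)$ to show that at most one ``deficient'' index $j_0 \in [s-1]$ can have $d^+_D(v, U_{j_0}) < \tau n$ or $d_G(v, U_{j_0}) < \tau n$, for some threshold $\tau$ of order $1/(s-1)$. Mirroring Claim~\ref{clm:U_0_is_empty}, I would then pick distinct $i, j \in [s-1]$ arranged so that $U_{j_0}$ is not forced to be a $B$-part, set $A' \subseteq N_G(v, U_j)$ with $|A'| = L(s-2) + 1$, pick $B' \subseteq N^+_D(v, U_i) \cap \bigcap_{x \in A'} N_H(x)$ with $|B'| = L$, and extend $B'$ to $B$ by adding, for each $k \in [s-1] \setminus \{i, j\}$, $L$ vertices from $N^+_D(v, U_k) \cap \bigcap_{x \in A'} N_H(x)$. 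Because $A' \subseteq N_H(y) \subseteq N^+_D(y)$ for every $y \in B$ by construction, the ``at most one bad $u \in A'$ per $y \in B$'' count from Claim~\ref{clm:U_0_is_empty} applies (it requires only $A' \subseteq N^+_D(y)$, not $v \in N^+_D(y)$), so the size gap $|A'| > |B|$ yields some $u \in A'$ with $c(uy) \neq c(vy)$ for every $y \in B$; setting $A := \{u, v\}$ and invoking Observation~\ref{obs:proper_to_rainbow2} produces the desired rainbow $K_2 \vee K^\ell_{s-2}$.

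The main obstacle is the degenerate subcase in which $d_G(v, U_{j_0})$ is too small to host $A'$ of size $L(s-2) + 1$ (blocking the choice $j = j_0$) and $d^+_D(v, U_{j_0})$ is simultaneously too small to supply $L$ vertices for $B \cap U_{j_0}$ (blocking any choice with $U_{j_0}$ as a $B$-part). In this regime $v$ is adjacent in $G$ to nearly all of $\bigcup_{k \ne j_0} U_k$ with a nearly proper coloring, behaving essentially like a ``phantom $U_{j_0}$-vertex''. I expect this subcase to require a dedicated argument: either showing that the near-properness of $c$ at $v$ combined with $d_H$-saturation on $U$-vertices forces $d^-_D(v)$ to exceed the $W$-threshold (contradicting $v \in W$ directly), or pairing $v$ with a second $W$-vertex $v' \in N_G(v) \cap W$ whose existence follows from $|W|$-density counting together with $v$'s saturated neighborhood, and then running the $K_2 \vee K^\ell_{s-2}$ construction with $A = \{v, v'\}$ and $B$-parts drawn from $\{U_k : k \ne j_0\}$.
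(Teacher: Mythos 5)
Your approach is genuinely different from the paper's and is incomplete.  The paper does not try to construct a rainbow $K_s$ through a $W$-vertex at all; instead it uses a short double-count that leans on what has already been established.  Since $G[U]$ is $K_s$-free (Claim~\ref{clm:U_is_K_r-free}), Tur\'an's theorem gives $\sum_{u\in U} d_G(u,U) = 2e(G[U]) \le (1-\tfrac1{s-1})|U|^2$.  Because $d^+_D(u,U)\le d_G(u,U)$ and $\delta^+(D)\ge(1-\tfrac1{s-1})n$, one gets
\begin{equation*}
e^+_D(U,W)\;=\;\sum_{u\in U}\bigl(d^+_D(u)-d^+_D(u,U)\bigr)\;\ge\;(1-\tfrac1{s-1})\,|U|\,|W|,
\end{equation*}
so if $W\neq\emptyset$ some $w\in W$ receives at least $(1-\tfrac1{s-1})|U|\ge (1-\tfrac1{s-1}-\sigma^2)n$ in-arcs from $U$, exceeding the $W$-threshold $(1-\tfrac1{s-1}-0.1\sigma)n$ and contradicting $w\in W$.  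No rainbow structure is built.

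Measured against this, your plan has a genuine gap that you yourself flag: the subcase in which a single index $j_0$ is deficient simultaneously for $d_G(v,U_{j_0})$ and $d^+_D(v,U_{j_0})$, so that $U_{j_0}$ can host neither the $A'$-part nor a $B$-part.  You correctly note this is the hard case, but your two proposed resolutions are left as speculation.  The first of them (arguing that $v$ must actually have large in-degree and so cannot lie in $W$) is in fact the morally correct direction, but making it work purely from ``near-properness of $c$ at $v$'' and local $d_H$-saturation is not a clear path; what actually makes the argument go through is the \emph{global} Tur\'an bound on $e(G[U])$, which your sketch never invokes.  The second resolution (pairing $v$ with another $W$-vertex) introduces new unverified claims (existence of $v'\in N_G(v)\cap W$ with the needed degree profile) and would itself require a case analysis.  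In addition, even in your ``non-degenerate'' case the construction is heavier than Claim~\ref{clm:U_0_is_empty}: enlarging $A'$ to size $L(s-2)+1$ while requiring $B'\subseteq\bigcap_{x\in A'}N_H(x)$ and $B$ to lie in $N^+_D(v)$ across all $s-2$ remaining parts stacks several $H$-codegree intersections that must each be justified from~\eqref{eq:U_i_H_deg_lower_bound}.  None of this is impossible, but it is substantially more work than the paper's three-line counting argument, and as written the proposal does not constitute a proof.
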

  \begin{proof}
    By Claim~\ref{clm:upper_bound_on_W}, we have 
    $|U| \ge n - |W| \ge (1 - \sigma^2)n$.
    With Claim~\ref{clm:U_is_K_r-free} and Tur\'an's Theorem, we have $e(G[U]) \le (1 - 1/(s-1)) |U|^2/2$.
    Assume for a contradiction that $W \neq \emptyset$.
    Then,
    \begin{multline*}
      e^+_D(U, W) = \sum_{v \in U}d^+_D(v) - d^+_D(v, U) \ge (1 - 1/(s-1))n \cdot |U| - 2 \cdot e(G[U]) \ge \\
      (1 - 1/(s-1))n \cdot |U| - (1 - 1/(s-1)) |U|^2 = (1 - 1/(s-1))|W| |U|,
    \end{multline*}
    and there exists $v \in W$ such that 
    \begin{equation*}
      d^-_D(v, U) \ge (1 - 1/(s-1)) |U| \ge (1 - 1/(s-1)) (1 - \sigma^2)n \ge (1 - 1/(s-1) - \sigma^2)n.
    \end{equation*}
    This is a contradiction, because, by the definition of $W$, $d^-_D(v, U) \le d^-_D(v) \le (1 - 1/(s-1) - 0.1 \sigma)n$.
  \end{proof}

  Claims~\ref{clm:U_is_K_r-free} and \ref{clm:W_is_empty} imply that $G$ is $K_r$-free.
  The result then follows from Tur\'an's Theorem and the fact that 
  $2 e(G) \ge n \cdot \delta(G) \ge n \cdot \delta^c(G) \ge (1 - 1/(s-1))n^2$.

\end{document}